\let\oldmarginpar\marginpar
\renewcommand\marginpar[1]{\oldmarginpar[\raggedleft\footnotesize #1]%
{\raggedright\footnotesize #1}}
\newcommand{\Z}{\mathbb{Z}}
\newcommand{\e}{{\varepsilon}}
\newcommand{\vol}{{\rm Vol}}
\newcommand{\bdy}{{\partial}} % Boundary
\renewcommand{\setminus}{{\smallsetminus}}
\theoremstyle{plain}
\newtheorem{theorem}{Theorem}[section]
\newtheorem{corollary}[theorem]{Corollary}
\newtheorem{lemma}[theorem]{Lemma}
\newtheorem{claim}[theorem]{Claim}
\theoremstyle{definition}
\newtheorem{define}[theorem]{Definition}
\newtheorem*{namedtheorem}{\theoremname}
\newcommand{\theoremname}{testing}
\title[Volume bounds for generalized twisted torus links]{Volume bounds for generalized twisted torus links}
\author[A.\ Champanerkar]{Abhijit Champanerkar}
\author[D.\ Futer]{David Futer}
\author[I.\ Kofman]{Ilya Kofman}
\author[W.\ Neumann]{Walter Neumann}
\author[J.\ Purcell]{Jessica S. Purcell}
\thanks{\today}
\begin{document}

 \begin{abstract}
 Twisted torus knots and links are given by twisting adjacent strands of a
torus link.  They are geometrically simple and contain many examples of
the smallest volume hyperbolic knots.  Many are also Lorenz links.

We study the geometry of twisted torus links and related generalizations.
We determine upper bounds on their hyperbolic volumes that depend only on
the number of strands being twisted.  We exhibit a family of twisted torus
knots for which this upper bound is sharp, and another family with volumes
approaching infinity.  Consequently, we show there exist twisted torus
knots with arbitrarily large braid index and yet bounded volume.
 \end{abstract}

\maketitle

\section{Introduction}

Recently, there has been interest in relating the volume of a
hyperbolic knot and link to other link properties.  Lackenby has related the 
volume of an alternating link
to the number of twist regions in its diagram
\cite{lackenby:volume-alt}, and this relationship was extended to larger classes of links 
that satisfy a certain threshold of complexity, 
such as a high amount of symmetry or twisting \cite{fkp:dfvjp,
  fkp:symm, purcell:hyp-genaug}.  To better understand volumes in
general, it seems natural to also investigate properties of knots and links
that are ``simple.'' 

Twisted torus knots and links are obtained by twisting a subset of
strands of a closed torus braid.  These knots are geometrically simple
by several different measures of geometric complexity.  Dean
\cite{dean:sff} showed that they often admit small Seifert fibered and
lens space fillings.  In \cite{cdw:simplest, ckp:next-simplest}, it
was discovered that twisted torus knots dominate the census of
``simplest hyperbolic knots,'' those whose complements can be
triangulated with seven or fewer ideal tetrahedra.  It is not
surprising then that twisted torus knots contain many examples of the
smallest volume hyperbolic knots.

In this paper we investigate the geometry of twisted torus links and
closely related generalizations.  We determine upper bounds on their
volumes in terms of their description parameters.  We also exhibit a
family of twisted torus knots for which this upper bound is sharp, and
another family with volumes approaching infinity.

A consequence of these results is that the braid index of a knot or
link gives no indication of its volume.  Using techniques of
\cite{birman-kofman} to determine braid index, we show there exist
twisted torus knots with arbitrarily large braid index and yet bounded
volume.  The reverse result is also known, for example closed
3--braids can have unbounded volume \cite{fkp:farey}.

\subsection{Twisted torus links}

A {\em positive root} $\beta$ will mean a positive $n$--braid whose
$n$--th power is the central element $\Delta_n^2$ (i.e. the full
twist) in $B_n$.  We show in Theorem \ref{thm:roots}
% Section \ref{sec:roots} 
that there are $2^{n-2}$ braid isotopy classes of positive $n$-th
roots, all of which have the form
\begin{equation}\label{eq:roots}
\beta=\sigma_{i_1}\cdots\sigma_{i_{n-1}}
\end{equation} 
with $i_1,\dots,i_{n-1}$ a permutation of $1,\dots,n-1$, and all of
which are conjugate in $B_n$ to $\delta_n=\sigma_1\cdots
\sigma_{n-1}$, which we will call the {\em standard root}. Let 
$\bar\delta_n= \sigma_{n-1}\cdots\sigma_{1}$ denote a conjugate root which 
we will also use below.

\begin{define}\label{def:ttk}
Let $p>0$, $q,s \neq 0$, and $1<r\leq p+|q|$ be integers.  Let $\beta\in
B_r$ be any positive root.  Let $L$ be a $(p,q)$--torus link embedded
on a flat torus.  Let $D$ be a regular neighborhood of a line segment
that crosses $r$ strands of $L$, as in Figure \ref{fig:ttkfig}.  The
\emph{twisted torus link} $T(p,q,r,s,\beta)$ is formed from $L$ by
replacing the $r$ strands of $L\cap D$ with the braid $\beta^s$.  We
will usually suppress the root $\beta$ in the notation.
\end{define}

\begin{figure}
\includegraphics[height=0.74in]{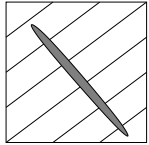}
\caption{The regular neighborhood of a line segment crossing $r$ strands.}
\label{fig:ttkfig}
\end{figure}

In \cite{dean:sff}, Dean defined the twisted torus link $T(p,q,r,s)$
by replacing the $r$ strands of $L\cap D$ with the $r$--braid
$(\delta_r)^s$, for $s$ an integer multiple of $r$, which implies
$T(p,q,r,s)=T(q,p,r,s)$.
In \cite{cdw:simplest, ckp:next-simplest}, twisted torus links were
defined as closed braids for $1<r<p$ and for $s$ an integer multiple
of $r$; namely, the closure of the $p$--braid
$(\delta_p)^q\,(\delta_r)^s=(\sigma_1\cdots\sigma_{p-1})^q
(\sigma_1\cdots \sigma_{r-1})^s$.  In \cite{birman-kofman}, this was
generalized to any integer $s$, for which switching $p$ and $q$ may
result in distinct links.
Definition \ref{def:ttk} includes all of these as special cases.
%  but we do not know whether closed braids of the form
% $(\delta_p)^q\,(\delta_r)^s$ form a {\em proper} subset of twisted
% torus links as in Definition \ref{def:ttk}.

\subsection{$T$--links}

A natural way to generalize twisted torus links is to repeatedly twist
nested subsets of strands.  

\begin{define}
  Let $ r_1 > \dots > r_k \geq 2 $ and $q,s_i\neq 0$ be integers.
  Define the \emph{ $T$--link} $T((p,q),(r_1, s_1, \beta_1), \dots,
  (r_k,s_k,\beta_k))$ 
to be formed from $L$, as in Definition
  \ref{def:ttk} with $r=r_1$, by replacing the $r$ strands of $L\cap
  D$ with the braid $\beta_{1}^{s_1}\cdots \beta_{k}^{s_k}$, where
  each $\beta_i$ is a specified $r_i$-th root of the full twist on
 the first $r_i$ strands.  We will usually suppress the roots $\beta_i$ in the notation.
If $k=1$, a $T$--link is a twisted torus
  link. See Figure \ref{fig:T-knots} for an example. 
\end{define}

\begin{figure}[h]
  \begin{center}
    \subfigure[{\tiny $T(9,7)$}]{\includegraphics[scale=0.37]{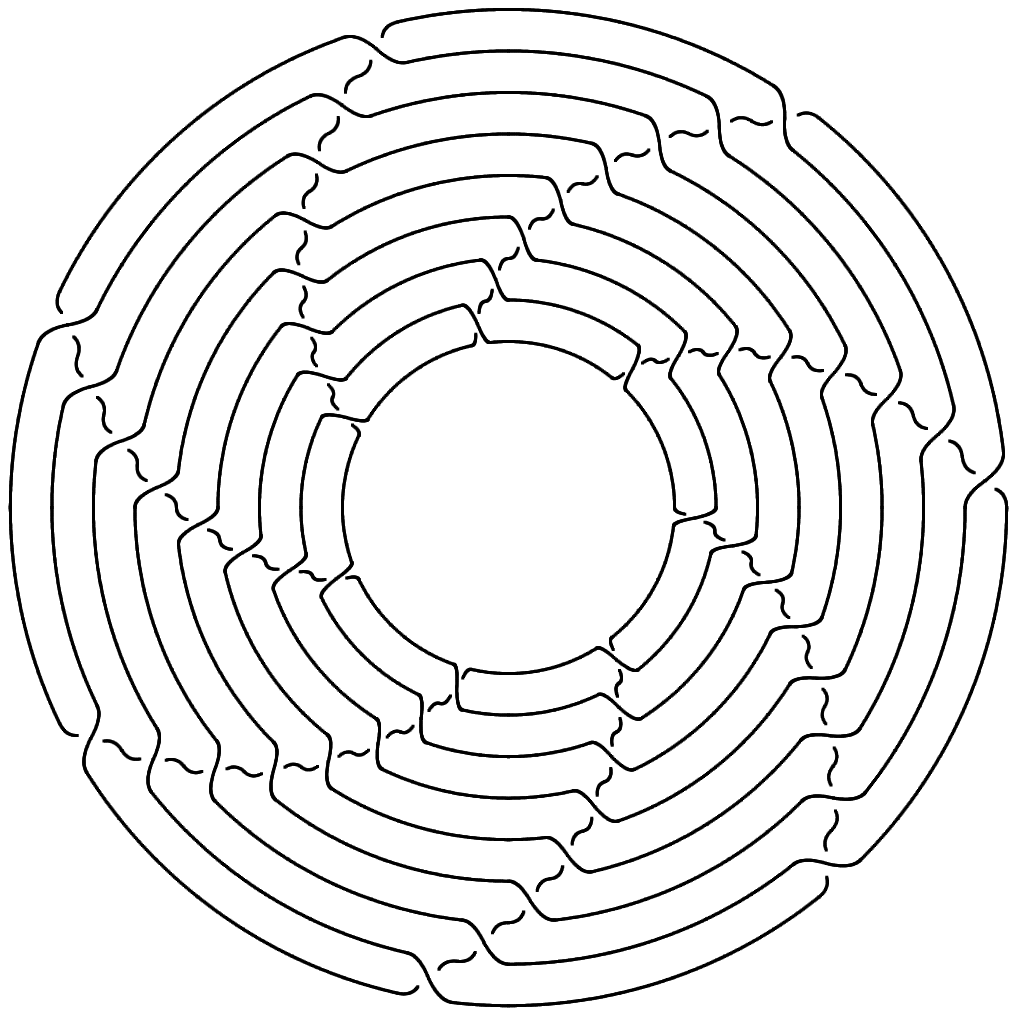}}
\qquad\quad
\subfigure[{\tiny $T(9,7,5,3)$}]{\includegraphics[scale=0.37]{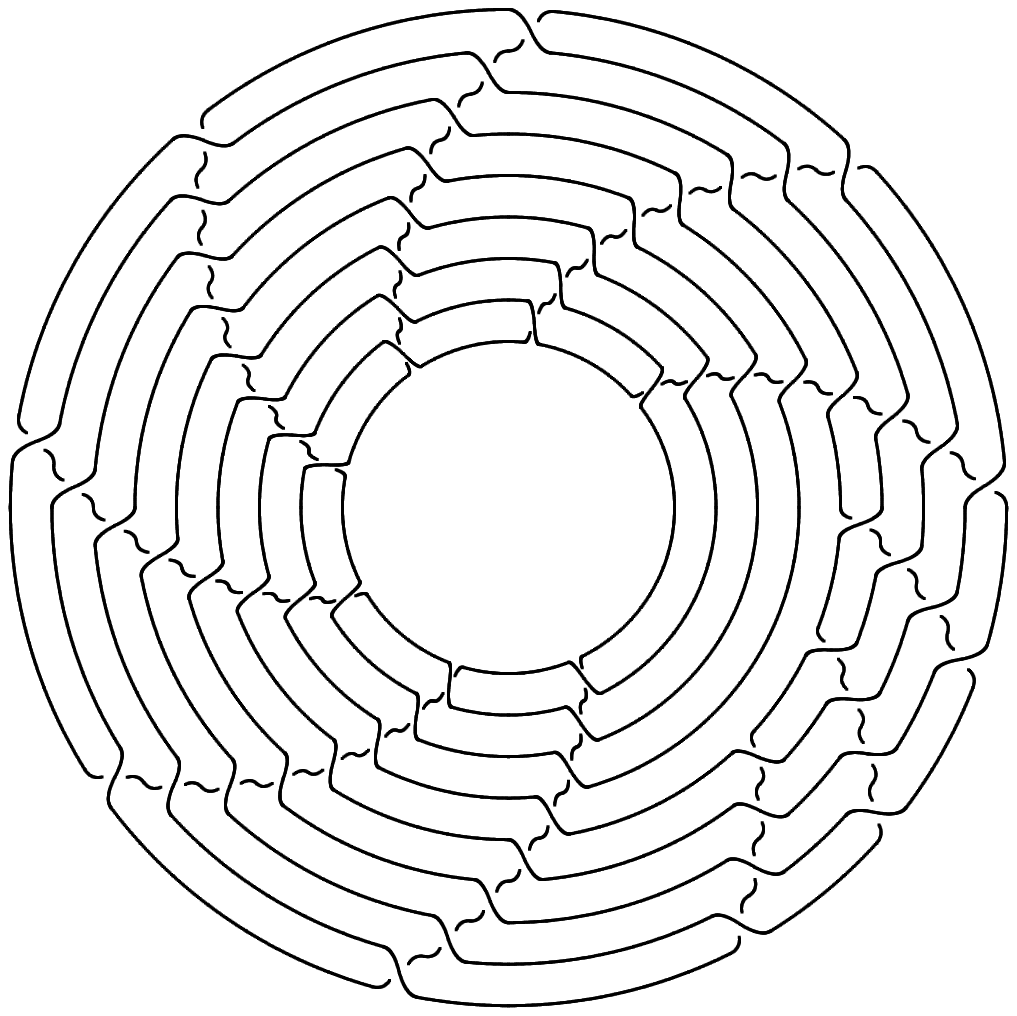}}
\qquad\quad
    \subfigure[{\tiny $T((9,7),(5,3),(3,4))$}]{\includegraphics[scale=0.37]{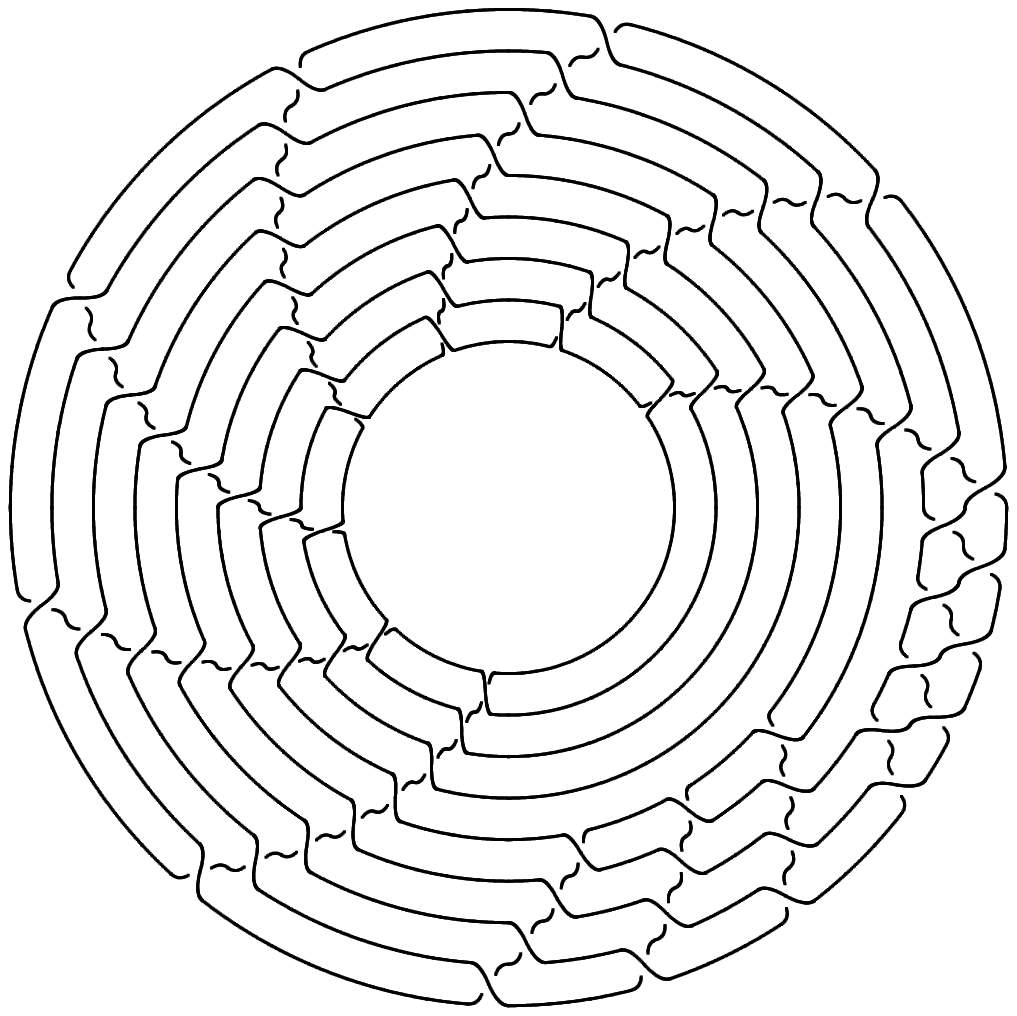}}
  \end{center}
  \caption{A torus knot, a twisted torus knot, and a $T$--knot using standard roots.}
  \label{fig:T-knots}
\end{figure}

The above definition of $T$--links is generalized from the definition
in \cite{birman-kofman}, but our $s_i$'s may be negative and our
$r_i$'s form a decreasing sequence.  By [4], it follows that positive
$T$--links using only the roots $\bar\delta_{r_i}$ coincide with
Lorenz links.  The $T$--link point of view opens the door to
understanding the geometry of Lorenz link complements.  But our
results below do not require positivity.  We provide volume bounds for
$T$--links, so in particular, for Lorenz links.

%% The above definition of $T$--links is generalized from the
%% definition in \cite{birman-kofman}, where positive $T$--links were
%% defined using only the standard roots, and were shown to coincide
%% with Lorenz links.  The $T$--link point of view opens the door to
%% understanding the geometry of Lorenz link complements.  But in our
%% definition above, we do not require positivity.  Our results below
%% provide volume bounds for $T$--links, so in particular, for Lorenz
%% links.

\subsection{Volume bounds}

Let $\vol(K)$ denote the volume of the link complement $S^3\setminus
K$.  If $K$ is not hyperbolic, $\vol(K)$ is the sum of volumes of the
hyperbolic pieces of $S^3\setminus K$.  Let $v_3 \approx 1.0149$
denote the volume of the regular hyperbolic ideal tetrahedron.  In
Section \ref{sec:upper-bound}, we prove the following result.

\begin{theorem}\label{thm:s-bound}
 Let $T(p,q,r,s)$ be a twisted torus link with positive root $\beta$. Then
\begin{align*}
\vol(T(p,q,r,s))\ & < \ 10v_3 &  {\it if\ } r=2, \\
\vol(T(p,q,r,s))\ & < \ v_3(2r+10) &  {\it if\ } s\ {\rm mod}\ r=0, \\
\vol(T(p,q,r,s))\ & < \ v_3(r^2 + r + 10) &  {\it if\ } \beta = \delta_r {\it \ or\ } \bar\delta_r, \\
\vol(T(p,q,r,s))\ & < \ v_3( r^2 + 4r + 4) & {\it otherwise.}
\end{align*}
\end{theorem}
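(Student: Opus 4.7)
The plan is to bound $\vol(T(p,q,r,s))$ by constructing an augmented parent link $L^+$ such that (i) $T(p,q,r,s)$ is obtained from $L^+$ by Dehn filling on the added components, and (ii) $L^+$ admits an explicit ideal polyhedral decomposition whose complexity depends only on $r$ and the isotopy class of $\beta$.

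First, I augment the twist region $\beta^s$. Using $\beta^r = \Delta_r^2$, write $s = mr + t$ with $0 \le t < r$, so $\beta^s = (\Delta_r^2)^m\, \beta^t$. A single crossing circle $C_0$ encircling all $r$ twisted strands captures the $m$ full twists, which can then be restored by an appropriate Dehn filling on $C_0$. The residual braid $\beta^t$ has at most $t(r-1) < r(r-1)$ generators, and I add one further crossing circle per maximal twist region of $\beta^t$. In the special cases of the theorem the residual is either trivial (when $s \equiv 0 \pmod r$, or when $r = 2$ where $\beta = \sigma_1$ and the whole twist region is a single band of crossings) or has a particularly constrained twist-region structure (when $\beta = \delta_r$ or $\bar\delta_r$), so fewer crossing circles are needed.

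Second, by Thurston's Dehn surgery theorem the volume is non-increasing (and the Gromov norm non-increasing in the non-hyperbolic case) under the Dehn filling taking $L^+$ back to $T(p,q,r,s)$, giving $\vol(T(p,q,r,s)) \le \vol(L^+)$. The added crossing circles also admit Rolfsen twists that modify $p,q,s$ without changing the homeomorphism type of $S^3 \setminus L^+$; reducing to a canonical representative within each Rolfsen orbit produces an $L^+$ whose complement depends only on $r$ and $\beta$, not on $p, q, s$.

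Finally, I construct an explicit ideal polyhedral decomposition of the canonical $L^+$ in each of the four cases and count tetrahedra: $10$ for $r = 2$, $2r+10$ when $s \equiv 0 \pmod r$, $r^2+r+10$ when $\beta \in \{\delta_r, \bar\delta_r\}$, and $r^2+4r+4=(r+2)^2$ for a general positive root. Since every ideal tetrahedron has volume at most $v_3$, this yields $\vol(L^+) \le N v_3$ in each case and hence the stated bounds. The main obstacle is this last step: producing a polyhedral decomposition with the correct tetrahedron count. The generic-root case is hardest, because $\beta^t$ can carry essentially arbitrary braid-word structure and each generator may sit in its own twist region, so the decomposition must efficiently stitch together a quadratic number of augmentation pieces; the special cases require identifying the canonical simplified $L^+$ precisely enough to exhibit the sharper counts.
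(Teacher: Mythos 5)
There is a genuine gap, and it sits at the heart of what makes the theorem surprising: the independence of the bound from $p$ and $q$. Your parent link $L^+$ consists of the twisted torus link together with a crossing circle $C_0$ about the $r$ twisted strands and further crossing circles about twist regions of the residual braid $\beta^t$. Rolfsen twists on $C_0$ change $s$ only by multiples of $r$, and twists on the small circles change only the crossing counts inside the twist regions of $\beta^t$; none of these moves can alter $p$ or $q$. So your claim that the Rolfsen orbit contains a representative ``whose complement depends only on $r$ and $\beta$'' is false: the complement of your $L^+$ still contains the full $(p,q)$--torus braiding, whose combinatorial complexity grows with $p$ and $q$, and no fixed-size polyhedral decomposition of it exists. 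The paper's mechanism for killing the $(p,q)$--dependence is different and essential: one drills out both components $C_1\cup C_2$ of a Hopf link so that the remaining link lives in $T^2\times I$; Dehn twisting along the torus (equivalently, changing the filling slopes on $C_1$ and $C_2$) then implements the Euclidean algorithm on the slope $p/q$ and reduces to a manifold $M(n,m,r,s')$ with $n,m<r$, $n+m\geq r$, $s'=s\bmod r$ (Lemma \ref{lemma:Mpqr-homeo}). Your construction has no analogue of these two extra cusps, and without them the reduction cannot be made.

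Separately, the step you yourself flag as the main obstacle --- exhibiting decompositions realizing the exact counts $10$, $2r+10$, $r^2+r+10$, and $(r+2)^2$ --- is not a routine verification but is essentially the entire content of the paper's proof (Lemmas \ref{lemma:tetrbound1}, \ref{lemma:braid-quads}, \ref{lemma:collapse}, \ref{lemma:tetrbound2}): one cuts $M(n,m,r,s')$ along $T^2\times\{0\}$, slices and flattens the half--disks bounded by the crossing circle, inserts a medial tetrahedron at each quadrilateral of the projection of $\beta^{s'}$, collapses paired triangles, and cones the remaining faces to $T^2\times\{\pm 1\}$, with an Euler-characteristic count of quadrilaterals, triangles and bigons (depending on the normal form of the root $\beta$) giving the stated totals. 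Your alternative of fully augmenting every twist region of $\beta^t$ would also need its own counting argument, and since $\beta^t$ can have on the order of $r^2$ twist regions, a generic augmented-link volume estimate would produce constants of the wrong size rather than the specific bounds in the statement. As written, the proposal neither controls $p,q$ nor substantiates the tetrahedron counts, so it does not yield the theorem.
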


\begin{theorem}
  \label{thm:sharp2}
Choose any sequence $(p_N, q_N) \to (\infty,\infty)$, such that $\gcd(p_N, q_N)=1$. Then the twisted torus knots
$T(p_N,q_N,2,2N)$ have volume  approaching $10v_3$ as $N \to \infty$.
\end{theorem}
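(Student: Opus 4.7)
My plan is to realize $S^3\setminus T(p_N,q_N,2,2N)$ as Dehn filling of a cusped hyperbolic manifold whose volume approaches $10v_3$, then apply Thurston's hyperbolic Dehn surgery theorem to match the upper bound from Theorem~\ref{thm:s-bound}.

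First, I would augment the twist region: let $C$ be a crossing circle bounding a disk that meets the two twisted strands of $T(p,q)$ transversely. Since full twists inside the disk bounded by $C$ can be undone by a homeomorphism of the complement, the manifold $M_{p,q}:=S^3\setminus(T(p,q)\cup C)$ is independent of the twist count, and $S^3\setminus T(p,q,2,2N)$ is the $1/N$--Dehn filling of $M_{p,q}$ on the cusp of $C$. Thurston's hyperbolic Dehn surgery theorem then gives $\vol(T(p,q,2,2N))\nearrow \vol(M_{p,q})$ as $N\to\infty$ for each fixed $(p,q)$, with a quantitative error controlled by the Neumann--Zagier asymptotic expansion.

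Second, I would show that $\vol(M_{p,q})\to 10v_3$ as $(p,q)\to(\infty,\infty)$ with $\gcd(p,q)=1$. This step should be extracted from the ideal polyhedral decomposition of $M_{p,q}$ underlying the proof of Theorem~\ref{thm:s-bound}: in the $r=2$ case the bound $10v_3$ arises from at most ten ideal tetrahedra, and the proposal is that their shape parameters converge to those of the regular ideal tetrahedron as $(p,q)\to(\infty,\infty)$, so that $\vol(M_{p,q})\to 10v_3$. Equivalently, the family $\{M_{p,q}\}$ should arise as Dehn fillings of a fixed limit manifold $M^*$ of volume exactly $10v_3$, with slope lengths tending to infinity.

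Finally, monotonicity of volume under Dehn filling yields $\vol(T(p_N,q_N,2,2N)) \le \vol(M_{p_N,q_N}) \le 10v_3$, with the right inequality becoming asymptotic equality by the previous step. A uniform application of Thurston's theorem makes the left inequality asymptotically sharp even as $N\to\infty$ together with $p_N,q_N\to\infty$, giving the desired limit. The main obstacle is Step 2: explicitly pinning down $M^*$ (or, equivalently, verifying that the polyhedral pieces in the decomposition of Theorem~\ref{thm:s-bound} degenerate to regular ideal tetrahedra) so as to upgrade the upper bound of $10v_3$ to a matching asymptotic lower bound.
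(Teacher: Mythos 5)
Your scaffolding (augment the twist region, view the knot complement as a Dehn filling, invoke monotonicity and Thurston's Dehn surgery theorem) matches the paper's strategy, but the heart of the proof is exactly the step you flag as "the main obstacle," and leaving it open is a genuine gap: without it you only get $\limsup \vol \le 10v_3$, not the asserted limit. The paper closes this gap by pinning down your $M^*$ explicitly: it is $M(1,1,2,0)$, the complement in $S^3$ of the Hopf link $C_1\cup C_2$, the $(1,1)$--curve on $T^2\times\{0\}$, and the crossing circle $L$ (a fixed $4$--cusped manifold, independent of $p,q,N$, by Lemma \ref{lemma:Mpqr-homeo}). It then proves $\vol(M(1,1,2,0))=10v_3$ exactly, not asymptotically: the $10$ ideal tetrahedra of Lemma \ref{lemma:tetrbound1} are shown to be \emph{regular}, either by passing to the $\Z^2$--cover by the infinite chain--link--fence complement and quoting the Agol--Thurston circle-packing decomposition, or by observing that every edge of the triangulation is $6$--valent, so the angle and completeness equations force all dihedral angles to be $\pi/3$. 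Your suggestion that the shapes of the tetrahedra in the decompositions of $M_{p,q}$ "converge to regular" is not how the argument goes and is not something you could extract from Theorem \ref{thm:s-bound}, whose proof is purely a tetrahedron count with no geometric control on shapes; the regularity statement is about the single limit manifold, and it requires the additional input above.

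A second, smaller issue is the double limit. Your two-stage argument (fix $(p,q)$, let $N\to\infty$ to reach $\vol(M_{p,q})$, then let $(p,q)\to\infty$) needs a uniformity statement you do not supply, since in the theorem $N$ and $(p_N,q_N)$ diverge together. The paper avoids this by realizing $S^3\setminus T(p,q,2,2N)$ as a filling of the \emph{fixed} manifold $M(1,1,2,0)$ along three slopes at once: $(1,N)$ on the crossing circle, and, after embedding $T^2\times I$ via a matrix $A$ with $uq-pv=1$, the slopes $(q-v,-v)$ and $(u-p,u)$ on the two torus boundary components. All three slope lengths tend to infinity as $N,p_N,q_N\to\infty$, so a single application of the Dehn surgery theorem on one fixed parent manifold gives $\vol\to 10v_3$. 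Your closing "equivalently" sentence is the right reformulation, but to make the proof complete you must actually identify $M^*$, compute these filling slopes, and establish $\vol(M^*)=10v_3$ by one of the arguments above.
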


\noindent
The noteworthy feature of Theorem \ref{thm:s-bound} is that the upper bound only depends on the parameter $r$. The independence of $p$ and $q$ was a surprise to the authors.  
One consequence of this independence is that there is no direct relationship between the braid index and volume of a link.

\begin{corollary}
\label{cor:braid-vs-volume}
  The twisted torus knots $T(p,q,2,s),\ p,q>2,\ s>0$, have arbitrarily
  large braid index and volume bounded by $10v_3$.
\end{corollary}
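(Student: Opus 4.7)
The volume inequality is the easy half: for any $T(p,q,2,s)$ we have $r=2$, so the first line of Theorem~\ref{thm:s-bound} gives $\vol(T(p,q,2,s)) < 10v_3$ uniformly in $p,q,s$. The content of the corollary is therefore to exhibit, for each $N$, a twisted torus knot of this form with braid index at least $N$.

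My plan is to use a positive closed-braid presentation together with the Morton--Franks--Williams (MFW) inequality. Since for $r=2$ the only positive root is $\beta=\sigma_1$, the discussion following Definition~\ref{def:ttk} identifies $T(p,q,2,s)$ with the closure of the $p$-braid
\[
w_{p,q,s} \;=\; (\sigma_1\sigma_2\cdots\sigma_{p-1})^q\,\sigma_1^s .
\]
This is a positive braid in which every generator $\sigma_1,\dots,\sigma_{p-1}$ appears (from the torus-braid factor, whenever $q\geq 1$). For such braids the MFW lower bound, computed from the spread of the HOMFLY polynomial, is sharp and equal to the number of strands; this is exactly the mechanism Birman and Kofman use in \cite{birman-kofman} to read off braid indices of $T$-links, and I would import that computation. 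It yields that the braid index of $T(p,q,2,s)$ equals $p$ for every such $(p,q,s)$.

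I would then exhibit the concrete family $T(p,\, p+1,\, 2,\, 2)$ for $p=3,4,5,\dots$. Each term is a knot: the permutation of $(\sigma_1\cdots\sigma_{p-1})^{p+1}$ is the $(p{+}1)$st power of a $p$-cycle, hence still a $p$-cycle since $\gcd(p,p+1)=1$, while $\sigma_1^{2}$ contributes the identity permutation. Each member satisfies $p, q=p+1>2$ and $s=2>0$; each has braid index equal to $p$ by the previous paragraph; and each has volume at most $10v_3$ by Theorem~\ref{thm:s-bound}. Letting $p\to\infty$ finishes the corollary.

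The main obstacle is the braid-index computation itself, namely verifying that MFW is tight and equal to $p$ for the positive braid $w_{p,q,s}$. Everything else is either a one-line citation of Theorem~\ref{thm:s-bound} or an elementary permutation check. The braid-index step is a standard HOMFLY calculation for positive-braid closures and is precisely the technical input supplied by \cite{birman-kofman}.
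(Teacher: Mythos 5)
The volume half of your argument is exactly the paper's: cite the $r=2$ line of Theorem \ref{thm:s-bound}. The problem is the braid-index half. Your general principle---that for a positive braid in which every generator appears, the Morton--Franks--Williams bound is sharp and equal to the number of strands---is false, and so is the resulting claim that $T(p,q,2,s)$ has braid index $p$ for every such $(p,q,s)$. The torus knots themselves are counterexamples: $(\sigma_1\cdots\sigma_{p-1})^q$ with $2\le q<p$ is a positive $p$-braid using every generator, yet its closure $T(p,q)$ has braid index $q$, not $p$. The computation you propose to import from \cite{birman-kofman} (Corollary 8, which is what the paper actually cites) gives braid index $\min(p,q)$ when $r\le q$, which contradicts ``equals $p$'' whenever $q<p$; for instance $T(10,3,2,2)$ has braid index $3$.

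Your concrete family $T(p,p+1,2,2)$ does work, because there $q=p+1>p$ and $\min(p,q)=p$, so the braid index is $p$ and tends to infinity; the permutation check that these are knots is fine. But as written the crucial step rests on a false lemma, so it needs repair: either cite Corollary 8 of \cite{birman-kofman} directly, as the paper does (its proof is then one line: braid index $=\min(p,q)\to\infty$ while volume stays below $10v_3$), or, if you want an MFW-style justification, observe that for $q\ge p$ the braid $(\sigma_1\cdots\sigma_{p-1})^q\sigma_1^s$ contains the full twist $\Delta_p^2$, and sharpness of MFW at the strand number is a theorem for braids containing a full twist---not for arbitrary positive braids using all generators.
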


\begin{proof}
  For $q, s>0$ and $p>r$, the minimal braid index of $T(p,q,r,s)$ with
  root $\delta_r$ or $\bar\delta_r$ is exactly $\min(p,q)$ if $r\leq q$, and
  $\min(s+q,r)$ if $r \geq q$ (Corollary 8 of \cite{birman-kofman}).
  By Theorem \ref{thm:s-bound}, any $T(p,q,2,s)$ has volume bounded
  above by $10v_3$, but for all $p,q>2$, its minimal braid index is
  $\min(p,q)$.
\end{proof}

The reverse of Corollary \ref{cor:braid-vs-volume} is also true: for
example, closed $3$--braids have unbounded hyperbolic volume. See  \cite[Theorem 5.5]{fkp:farey}.

When the twisted torus knots are Lorenz, we can use Lorenz duality
\cite[Corollary 4]{birman-kofman} to obtain another volume bound in terms of $q$.

\begin{corollary}\label{cor:duality}
If we use the roots $\delta_r$ or $\bar\delta_r$, and let $(q\cdot s)>0$ and $p > r$, 
\begin{align*}
\vol(T(p,q,r,s))\ & < \ 10v_3 &  {\it if\ } q =2, \\
\vol(T(p,q,r,s))\ & < \ v_3(2|q|+10) &  {\it if\ } p\ {\rm 
mod}\ q=r, \\
\vol(T(p,q,r,s))\ & <  \ v_3 ( q^2+|q|+10 ) & 
{\it otherwise.} 
\end{align*}
\end{corollary}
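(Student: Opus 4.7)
The plan is to exploit Lorenz duality to interchange the roles of $r$ and $|q|$, then apply Theorem \ref{thm:s-bound} to the dual presentation of the link.

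First I would observe that the hypotheses $q \cdot s > 0$, $p > r$, and use of the standard roots $\delta_r$ or $\bar\delta_r$ place $T(p,q,r,s)$ inside the class of positive Lorenz links, so Corollary 4 of \cite{birman-kofman} applies. That duality produces a second $T$-link description of the \emph{same} link in which the strand count $p$ has been replaced by something derived from $q$, and the innermost full-twist piece is now on $|q|$ strands rather than on $r$ strands. Since the duality takes standard-root presentations to standard-root presentations, the dual description still falls under the hypotheses of Theorem \ref{thm:s-bound}, with $|q|$ in the role of $r$.

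Writing $p = aq + b$ with $0 \le b < |q|$, I would translate the duality carefully into our $T(p,q,r,s)$ notation. The outcome I expect is that the twist exponent on the $|q|$ innermost strands of the dual is a multiple of $|q|$ precisely when $b=r$, i.e.\ when $p \bmod q = r$. With this translation in hand, the three cases of Theorem \ref{thm:s-bound} applied to the dual link deliver the three stated bounds: the first case with $|q|=2$ yields $\vol < 10v_3$; the case $p \bmod q = r$ lands inside the ``$s\bmod r = 0$'' branch of Theorem \ref{thm:s-bound} and produces $\vol < v_3(2|q|+10)$; and every other configuration is still a standard-root twisted torus link on $|q|$ innermost strands, so the third branch gives $\vol < v_3(|q|^2 + |q| + 10)$.

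The main obstacle is the parameter bookkeeping in the duality step: I need to check both that the Lorenz dual of $T(p,q,r,s)$ really is a $T$-link whose innermost piece is a twisted torus link on $|q|$ strands with a standard root of the full twist, and that the congruence $p \equiv r \pmod q$ is indeed the condition that makes the dual twist exponent divisible by $|q|$. The condition $p > r$ is used here to ensure the dual description is nondegenerate. Once those two combinatorial points are verified, the corollary follows by direct substitution into Theorem \ref{thm:s-bound}.
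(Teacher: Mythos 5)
Your proposal is correct and follows essentially the same route as the paper: invoke the Lorenz duality of \cite{birman-kofman} (concretely, $T(p,q,r,s) = T(q+s,r,q,p-r)$ for $q,s>0$, with mirror images handling $q,s<0$) to swap the roles of $q$ and $r$, and then apply Theorem \ref{thm:s-bound} to the dual presentation, with $(p-r)\bmod q = 0$, i.e.\ $p\bmod q = r$, landing in the ``$s\bmod r=0$'' branch. The parameter bookkeeping you flag as the remaining check is exactly what the explicit duality formula settles.
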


\begin{proof}
The braids $(\delta_p^q\, \delta_r^s), (\bar\delta_p^q\,
\bar\delta_r^s), (\delta_r^s \,\delta_p^q)$ and $(\bar\delta_r^s
\,\bar\delta_p^q)$ have isotopic closures.  When $(q \cdot s) >0$, all
the twisting is in the same direction.  Thus, under the hypotheses of
the corollary, the twisted torus links are Lorenz or mirror images of
Lorenz links.  So they satisfy the following duality coming from the
symmetry of the Lorenz template \cite{birman-kofman}:
$$ T(p,q,r,s) = T(q+s, r, q, p-r) \quad {\rm if}\ q,s >0. $$
In particular, $q$ and $r$ can be exchanged.
\end{proof}

Our results extend to volume bounds for $T$--links.
\begin{theorem}\label{thm:T-bound}
 If $L$ is the  $T$--link  
$T((p,q),(r_1, s_1,\beta_1), \dots, (r_k,s_k,\beta_k))$,  
\begin{align*}
\vol(L)\ & < \ v_3 \left(r_1^2+9r_1-8 \right) & \quad {\it if\ all}\ s_i\ {\rm mod}\ r_i=0, \\
\vol(L)\ & < \ v_3 \left( \tfrac{1}{3} r_1^3 + \tfrac{5}{2} r_1^2 + 5  r_1 -5 \right) & \quad {\it otherwise.}
\end{align*}
\end{theorem}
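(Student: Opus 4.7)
The plan is to generalize the crossing-circle augmentation technique driving Theorem \ref{thm:s-bound}, adapted to the nested twist structure of $T$-links.

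First, I would construct an augmented link $L^*$. For each $i=1,\dots,k$, insert a crossing circle $C_i$ around the first $r_i$ strands at the vertical position where $\beta_i^{s_i}$ sits in the braid. Writing $s_i=n_i r_i+t_i$ with $0\le t_i<r_i$, and using that $\beta_i^{r_i}=\Delta_{r_i}^2$ is the full twist, one can absorb the $n_i$ full twists into $C_i$ and leave only a residual braid $\beta_i^{t_i}$ inside. The original link $L$ is then recovered from $L^*$ by $-1/n_i$ Dehn filling along each $C_i$. By Thurston's Dehn filling theorem (applied to the hyperbolic pieces of the complement), $\vol(L)\le\vol(L^*)$, so it suffices to bound $\vol(L^*)$.

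Next, I would bound $\vol(L^*)$ by decomposing along the nested twice-punctured disks $D_i$ bounded by the $C_i$. Since $r_1>\dots>r_k\ge 2$, the $D_i$ are nested, and cutting $S^3\setminus L^*$ along them splits the complement into an outermost piece containing the $(p,q)$-torus core together with $C_1$ and the residual $\beta_1^{t_1}$, plus $k-1$ annular pieces each bounded by $D_i$ and $D_{i+1}$ and containing $\beta_i^{t_i}$, together with an innermost piece containing $C_k$ and $\beta_k^{t_k}$. The outermost piece is essentially a twisted torus link augmented by one crossing circle, so Theorem \ref{thm:s-bound} (together with its proof method) controls its volume. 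Each inner piece is a crossing-circle augmentation of a residual braid on $r_i$ strands living in a solid-torus region, and a single-level analysis bounds its volume by a linear term in $r_i$ when $t_i=0$ and by a quadratic term in $r_i$ otherwise.

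Summing the per-level estimates yields the stated bounds. In the clean case where every $t_i=0$, each level contributes linearly in $r_i$, and since $r_1>r_2>\dots>r_k\ge 2$ we have $\sum_i r_i\le\binom{r_1+1}{2}-1$, which combined with the $10v_3$ overhead absorbed by the outermost torus base produces $v_3(r_1^2+9r_1-8)$. In the general case, the worst per-level contribution is quadratic in $r_i$, and summing $r_i^2$ over $r_i=2,\dots,r_1$ gives the cubic leading term $\tfrac{1}{3}r_1^3$ plus lower-order corrections matching the second bound. The main obstacle will be verifying that cutting along the nested disks $D_i$ is compatible with the polyhedral decomposition used in the proof of Theorem \ref{thm:s-bound}, so that the per-level bounds can be added without double-counting pieces and without missing interaction volume between adjacent residual braids. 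A secondary issue is keeping careful bookkeeping of which constants get absorbed at the outermost level versus those that accumulate level by level, so that the final arithmetic produces the precise coefficients $9r_1-8$ and $\tfrac{5}{2}r_1^2+5r_1-5$ in the stated bound.
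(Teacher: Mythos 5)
Your first stage matches the paper's: augment each braid level with a crossing circle, reduce $s_i$ modulo the full twist $\beta_i^{r_i}=\Delta_{r_i}^2$, recover $L$ by Dehn filling the crossing circles, and invoke monotonicity of (simplicial) volume under filling; the final arithmetic using $k\le r_1-1$ and $\sum_i r_i\le \tfrac{1}{2}(r_1^2+r_1-2)$ is also the paper's. The genuine gap is in how you bound the augmented manifold. You propose to cut its complement along the nested punctured disks $D_i$ (they are $r_i$--punctured, not twice--punctured, by the way) and then \emph{add} per-level volume bounds. Hyperbolic volume and Gromov norm are not subadditive under cutting along punctured disks --- additivity of that kind holds only along tori (JSJ) --- so ``sum the volumes of the pieces'' is not a valid upper bound for $\vol(L^*)$. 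The only way such a sum becomes legitimate is to exhibit a single ideal triangulation of the whole augmented complement whose tetrahedra are organized level by level; constructing that triangulation and counting it is precisely the content of the paper's Lemmas \ref{lemma:tetrbound3} and \ref{lemma:tetrbound4} (which in turn rest on the medial-tetrahedron and collapsing analysis of Lemmas \ref{lemma:braid-quads} and \ref{lemma:collapse}). You flag this as ``the main obstacle'' but do not resolve it, and your per-level linear/quadratic counts are asserted rather than derived, so the heart of the proof is missing.

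A second, related gap: your augmented link $L^*$ does not include the Hopf link $C_1\cup C_2$, i.e.\ you never pass to $T^2\times I$. Without that, the outermost piece of your decomposition still carries the full $(p,q)$--torus braiding, and any naive polyhedral count of it grows with $pq$; the $(p,q)$--independence of the bound comes exactly from drilling the Hopf link so that the torus-link strands are disjoint parallel curves on $T^2\times\{0\}$ whose complementary regions are bigons that collapse. Citing Theorem \ref{thm:s-bound} ``together with its proof method'' for the outermost piece does not fix this: that theorem bounds the volume of a closed-up twisted torus link in $S^3$, not of a cut-open submanifold with punctured-disk boundary that also contains the crossing circle $C_1$ and the residual braid $\beta_1^{t_1}$, and your claimed ``$10v_3$ overhead'' for this piece has no justification in your setup. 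The paper instead bounds the single manifold $M(p,q,r_1,s_1',\dots,r_k,s_k')=S^3\setminus\bigl(C_1\cup C_2\cup(\cup_i L_i)\cup K\bigr)$ all at once, by cutting along $T^2\times\{0\}$, flattening the half--disks, inserting medial tetrahedra at the quadrilaterals of each $\beta_i^{s_i'}$, and coning to $T^2\times\{\pm1\}$; to repair your argument you would need to carry out an equivalent global construction rather than a cut-and-sum of volumes.
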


Again, the notable feature of Theorem \ref{thm:T-bound} is that even
though it takes many parameters to specify a $T$--link, a single
coordinate suffices to bound the volume from above.  

If $p>r_1$, the braids $(\bar\delta_{p}^{q}\, \bar\delta_{r_1}^{s_1} \cdots
\bar\delta_{r_k}^{s_k})$ and $(\delta_{r_k}^{s_k} \cdots
\delta_{r_1}^{s_1}\, \delta_{p}^{q})$ have isotopic closures.  So
if all $s_i>0$ and all roots are $\bar\delta_{r_i}$, these T-links are
Lorenz by \cite{birman-kofman}, and Lorenz duality implies a result
analogous to Corollary \ref{cor:duality}, with $r_1$ replaced by
$(q+s_1+s_2+\ldots +s_{k-1})$.

If $r_1\leq d=\gcd(p,q)$ then $T((p,q),(r_1, s_1), \dots, (r_k,s_k))$
is a satellite link with companion $T(p/d, q/d)$.  In the JSJ
decomposition of this link complement, only the solid torus minus
$T((r_1, s_1), \dots, (r_k,s_k))$ can have non-zero volume, which is
bounded by the function from Theorem \ref{thm:T-bound} with $r_1$
replaced by $r_2$.  Similarly, if $r\le \gcd(p,q)$ in Theorem
\ref{thm:s-bound} the volume is zero. So we assume from now on that
$r_1>\gcd(p,q)$, resp.\ $r>\gcd(p,q)$.

In Section \ref{sec:lower-bound}, we prove the following theorem,
which shows that these volume bounds are non-trivial.

\begin{theorem}
\label{thm:lower-bound}
For any number $V$, there exists a hyperbolic twisted torus knot whose complement
has volume at least $V$.
\end{theorem}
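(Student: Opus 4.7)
The plan is to apply Thurston's hyperbolic Dehn surgery theorem to an augmented version of the twisted torus knot. Given parameters $(p, q, r)$ with $1 < r < p$ and $\gcd(p, q) = 1$, let $L = L(p, q, r)$ denote the link obtained from $T(p, q)$ by adjoining an unknotted circle $C$ encircling the $r$ strands on which the twisting is performed; with respect to the natural framing of $C$ (as the boundary of a disk meeting $T(p, q)$ transversely in $r$ points), the twisted torus knot $T(p, q, r, s)$ is exactly a $1/s$-Dehn filling of $L$. If $L$ is hyperbolic with volume $V_L$, Thurston's theorem says that for all sufficiently large $|s|$ the filled manifold is also hyperbolic with volume approaching $V_L$. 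It therefore suffices to exhibit, for each target $V$, parameters $(p, q, r)$ with $L$ hyperbolic and $\vol(L) > V + 1$; choosing $s$ large enough then yields a hyperbolic twisted torus knot of volume exceeding $V$.

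To produce such $L$ with unbounded volume, one must let $r$ grow, since Theorem \ref{thm:s-bound} already bounds $\vol(T(p, q, r, s))$ by a function of $r$ alone. The guiding principle is that drilling out additional unknotted components from a hyperbolic $3$-manifold can only increase hyperbolic volume. One therefore passes from $L$ to a further augmented link $\tilde L$, obtained by inserting crossing circles around twist regions of the torus braid of $T(p, q)$, or around additional pairs of adjacent strands inside the region encircled by $C$. For appropriate choices, $\tilde L$ is a fully augmented link admitting an explicit decomposition into right-angled ideal polyhedra, by the ideas of Agol--Thurston exploited in work of Purcell and coauthors; its volume grows linearly in the number of augmenting circles, hence without bound as the parameters scale with $r$. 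Since $\vol(L) \geq \vol(\tilde L)$, this provides the required lower bound on $\vol(L)$.

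The main obstacle is certifying hyperbolicity of $L$ along the chosen family and ruling out essential tori in its complement. The conditions $\gcd(p, q) = 1$ and $r > \gcd(p, q)$ avoid the obvious satellite structures noted in the discussion preceding Theorem \ref{thm:lower-bound}, but one still needs to exclude more subtle JSJ pieces. This can be done either by producing an explicit circle packing / horoball diagram for $\tilde L$ and transferring hyperbolicity back to $L$ via undrilling arguments, or by a direct appeal to Thurston's uniformization for Haken manifolds after verifying irreducibility and atoroidality of the augmented complement. Once $L$ is shown to be hyperbolic with large volume, the concluding Dehn filling step is immediate from Thurston's theorem, completing the proof.
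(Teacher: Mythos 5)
There is a genuine gap, and it sits exactly at the heart of the theorem. Your strategy is to bound $\vol(L)$ from below by building a further augmented link $\tilde L \supset L$ with many crossing circles and large volume, and then asserting ``Since $\vol(L) \geq \vol(\tilde L)$\dots''. But this inequality is backwards, and it contradicts the (correct) principle you cite in the same sentence: drilling extra components \emph{increases} volume, so $\vol(\tilde L) \geq \vol(L)$, i.e.\ $S^3\setminus L$ is a Dehn filling (along meridians of the extra circles) of $S^3 \setminus \tilde L$, and volume only goes down under filling. Knowing that the fully augmented link $\tilde L$ has volume growing linearly in the number of augmenting circles therefore gives no lower bound at all on $\vol(L)$, let alone on the volume of the twisted torus knot obtained by filling $L$. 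The whole difficulty of Theorem \ref{thm:lower-bound} is producing a \emph{lower} bound, and your mechanism for it is invalid as stated. (There is also a smaller issue: a $1/N$ filling on the crossing circle $C$ inserts $N$ full twists, so it realizes $T(p,q,r,Nr)$ rather than arbitrary $s$; and if you instead filled your extra crossing circles along nontrivial slopes to keep their volume, the resulting knots would generally be $T$--links or other knots, not twisted torus knots, so you would still need an argument that the filled object stays in the required family.)

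The paper's proof fixes precisely this by reversing the roles: it constructs one highly cusped link $L_N$ in $T^2 \times I$ (the Hopf link, the crossing circle $L$, a curve $K_0$, and $2N$ curves $K_{\pm i}$ of alternating slopes $1/0$ and $0/1$ on parallel tori), proves it is hyperbolic (Lemma \ref{lemma:LN-hyp}), and gets $\vol(S^3\setminus L_N) \geq (4+2N)v_3 > V$ from Adams' cusp count. The twisted torus knot is then obtained as a Dehn filling of $L_N$ along \emph{long} slopes: filling the pairs $K_i, K_{-i}$ along $\pm 1/n_i$ performs Dehn twists along essential annuli in $T^2\times I$, which changes the slope of $K_0$ according to the continued fraction $[n_1,\dots,n_N]$ and yields the parent manifold $M(p,q,r,0)$; the remaining fillings on $L$, $C_1$, $C_2$ are also chosen long. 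Jørgensen--Thurston then guarantees the filled manifold is hyperbolic with volume still exceeding $V$. In other words, the large-volume, many-cusped link must be a surgery \emph{parent} of the twisted torus knot, with all extra components filled along slopes that both keep the volume large and provably land back in the twisted torus knot family; your proposal has neither ingredient, so the claimed lower bound does not follow.
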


In the proof of Theorem \ref{thm:lower-bound}, we construct links by
twisting along annuli.  This theorem can be compared with related
work of Baker \cite{baker}.  He showed that the class of Berge knots,
which contains some twisted torus knots, contains knots which have
arbitrarily large volume.  However, Baker's examples are not
necessarily twisted torus knots.  Nor are the examples produced to
prove Theorem \ref{thm:lower-bound} necessarily Berge knots.

\subsection{Acknowledgments}  We thank the organizers of the 2007
conference at LSU, {\em A second time around the Volume Conjecture},
where this work was started.  We thank the anonymous referee for the
insightful idea that dramatically improved the tetrahedron count of Lemma  \ref{lemma:tetrbound2}.
% to flatten tetrahedra to significantly improve the quadratic bounds in Theorem \ref{thm:s-bound}.
We thank Joan Birman for helpful comments.  We are grateful for NSF
support, and support for the first and third author by PSC-CUNY.

%%%%%%%%%%%%%%%%%%%%%%%%%%%%%%%%%%%%%%%%%%%%%%%%%%%%%%%%%%%%%%%%%
\section{Positive roots of the full twist in $B_n$}\label{sec:roots}

Twisting is a natural geometric operation on links because any full
twist comes from $\pm 1$ Dehn surgery on an unknot in $S^3$.  However,
to define a twisted torus link without full twists, as in Definition
\ref{def:ttk}, we must first choose a particular root of the full
twist.  The link type generally changes with different choices of
roots.  Braids provide a natural notation for this choice, and in this
section we give an elementary proof of the classification of braid
isotopy classes of roots in Theorem \ref{thm:roots}.  This result also
follows from Corollary 12 of \cite{Birman-Gebhardt-Meneses}, which
uses the Garside structure of the braid group.

%% Braids provide a natural notation for this choice, and we were
%% surprised that the classification of braid isotopy classes of roots
%% was not in the literature.  So in this section, we prove Theorem
%% \ref{thm:roots}.

Recall that a positive root $\beta$ is a positive $n$--braid whose
$n$--th power is $\Delta^2$ in $B_n$.  Since $\Delta^2$ has length
$n(n-1)$ in the braid generators, a positive root must have length
$n-1$.  Moreover, since all $n-1$ generators must be involved, $\beta$
must have the form
$$\beta=\sigma_{i_1}\dots\sigma_{i_{n-1}}$$ with $i_1,\dots,i_{n-1}$ a
permutation of $1,\dots,n-1$.  This list of $(n-1)!$ braid words
includes all positive roots, but with multiplicity because many of
these are isotopic braids.

\begin{theorem}\label{thm:roots}
There are $2^{n-2}$ braid isotopy classes of positive $n$-th roots,
all of which are conjugate in $B_n$ to the standard root
$\delta_n=\sigma_1\cdots \sigma_{n-1}$.
\end{theorem}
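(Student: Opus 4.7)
My plan is a three-step argument.

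For the first step, I verify the form \eqref{eq:roots}. Since $\beta$ is positive and $\Delta_n^2$ has positive length $n(n-1)$, exponent sum forces $\beta$ to have length $n-1$. To see that every generator must appear, I argue by contradiction: if $\sigma_j$ were absent from $\beta$, then $\beta$ would lie in the subgroup $B_j\times B_{n-j}\subset B_n$ generated by the other $\sigma_i$'s (keeping strands $\{1,\ldots,j\}$ and $\{j+1,\ldots,n\}$ separate), and hence so would $\beta^n=\Delta_n^2$. But $\Delta_n^2$ has nonzero linking between any pair of strands, in particular across this split, so it cannot lie in $B_j\times B_{n-j}$. Hence every $\sigma_j$ appears, and by length exactly once.

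For the second step, I count braid-isotopy classes. Since no generator is repeated, the braid relation $\sigma_i\sigma_{i+1}\sigma_i=\sigma_{i+1}\sigma_i\sigma_{i+1}$ cannot be applied, and only commutations $\sigma_i\sigma_j=\sigma_j\sigma_i$ (with $|i-j|\ge 2$) are available. For each $k\in\{1,\ldots,n-2\}$ the relative order of the non-commuting pair $\sigma_k,\sigma_{k+1}$ is a commutation invariant, giving a map from isotopy classes to $\{0,1\}^{n-2}$. I will check both that every bit pattern is realized (the partial order it determines on $\{\sigma_1,\ldots,\sigma_{n-1}\}$ constrains only adjacent pairs, is acyclic, and so has a linear extension) and that two words with identical bit data are commutation-equivalent (by induction on $n$: since $\sigma_{n-1}$ commutes with $\sigma_1,\ldots,\sigma_{n-3}$, it can be placed adjacent to $\sigma_{n-2}$ on the side specified by bit $b_{n-2}$, after which the problem reduces to $B_{n-1}$).

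For the third step, I will show that the $B_n$-conjugacy class of $\delta_n$ already contains all $2^{n-2}$ isotopy classes, using only cyclic conjugation. The key computation is that moving the initial letter $\sigma_k$ of a positive root to the last position (a conjugation in $B_n$) and then simplifying by commutations flips exactly the bits $b_{k-1}$ and $b_k$, with the convention that the nonexistent boundary bits $b_0,b_{n-1}$ are not flipped; this move is available precisely when $b_{k-1}=1$ (or $k=1$) and $b_k=0$ (or $k=n-1$). The main remaining obstacle, which I expect to be the combinatorial heart of the proof, is the reachability claim that from any pattern $b\in\{0,1\}^{n-2}$ one can reach $(0,\ldots,0)$ via such moves (and their inverses, which come from cyclically conjugating the final letter to the front). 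I plan to prove this by a bubbling argument --- shift each $1$ rightward via successive $10\to 01$ flips to the right boundary, remove it using the $k=n-1$ boundary move, and iterate --- and since each step is realized as a conjugation in $B_n$, this will show every positive root is $B_n$-conjugate to $\delta_n$.
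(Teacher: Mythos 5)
Your proposal is correct, and its skeleton matches the paper's proof: both first show a positive root must be a square-free word as in \eqref{eq:roots}; both then classify such words by a combinatorial invariant taking $2^{n-2}$ values (your bit vector recording the relative order of $\sigma_k$ and $\sigma_{k+1}$ carries exactly the same information as the paper's normal form of decreasing chains indexed by subsets of $\{1,\dots,n-2\}$); and both finish by showing every class is conjugate to $\delta_n$ via cyclic rotation, which at the same time shows every class genuinely is an $n$-th root. The differences are in execution, and they cut both ways. For distinctness of the $2^{n-2}$ classes, the paper observes that the normal forms have distinct images in $S_n$, which is self-contained; you instead declare that braid isotopy between square-free positive words is generated by commutations because ``the braid relation cannot be applied.'' That conclusion is true, but it silently uses the nontrivial fact that two positive words equal in $B_n$ are connected through positive words by the defining relations (Garside's theorem; equivalently, the positive braid monoid embeds with the same presentation) --- in a general presentation, equality of positive words need not be witnessed positively. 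You should either invoke that theorem explicitly or replace this step by the paper's argument that the induced permutations already distinguish the classes. For the conjugacy step, the paper asserts that each normal form is cyclically equivalent to $\delta_n$ or $\bar\delta_n$ and then conjugates $\bar\delta_n$ to $\delta_n$ by $\Delta$; your version --- rotating a leading $\sigma_k$ to the end flips exactly $b_{k-1}$ and $b_k$, is available precisely when $(b_{k-1},b_k)=(1,0)$ with the stated boundary conventions, and bubbling each $1$ to the right boundary terminates at the all-zero pattern --- is correct, lands on $\delta_n$ directly without needing $\bar\delta_n$, and is more explicit than the paper's ``it is easy to see.'' Likewise your linking-number argument that every generator must appear fills in a detail the paper only asserts.
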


\begin{table} %[h]
\caption{All eight 5-th roots, repeated three times in each diagram.
  The corresponding braidword and subset $\{j_1,\dots,j_r\}$ are given
  below each one.}
\label{roots_table}
\begin{center}
\begin{tabular}{cccccccc}
\includegraphics[height=0.8in]{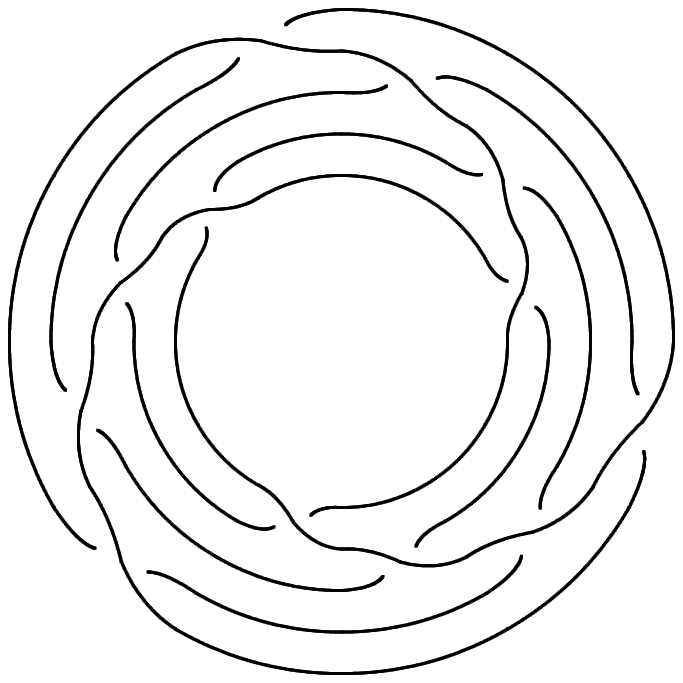} &
\includegraphics[height=0.8in]{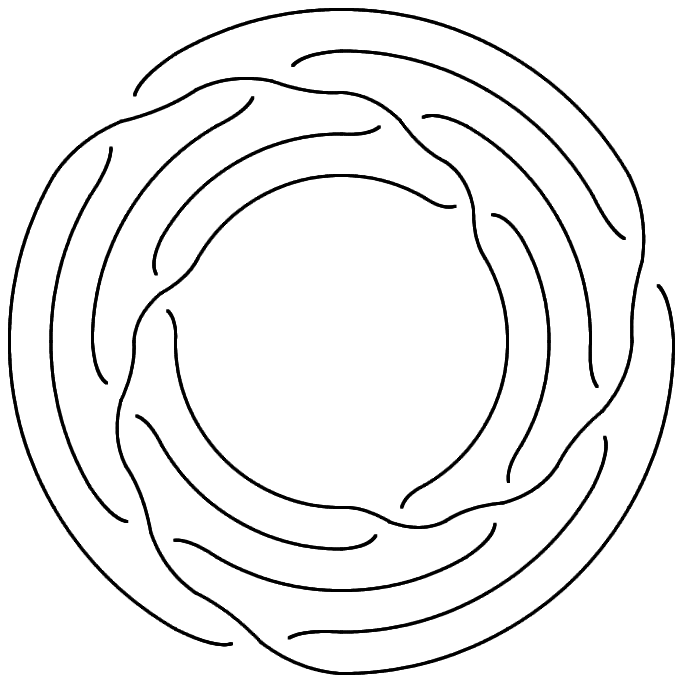} &
\includegraphics[height=0.8in]{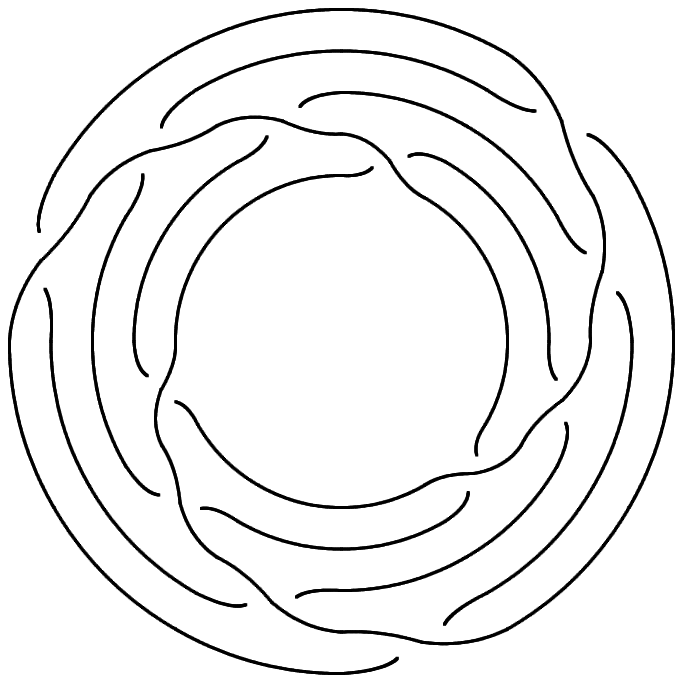} &
\includegraphics[height=0.8in]{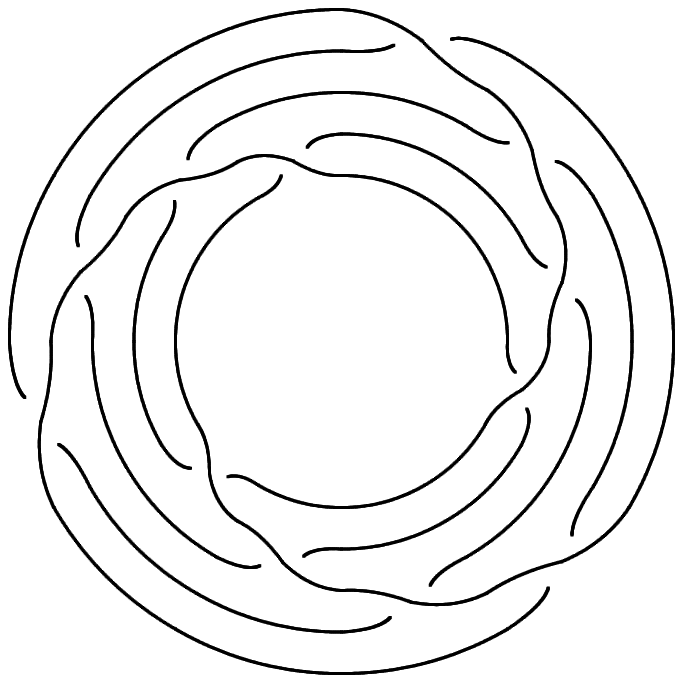} \\
4321 & 1432 & 2143  &3214 \\
$\{ \emptyset \}$ & $\{ 1 \}$ & $\{ 2 \}$ & $\{ 3 \}$ \\
\\ 
\hline \\ 
\end{tabular}

\begin{tabular}{cccccccc}
\includegraphics[height=0.8in]{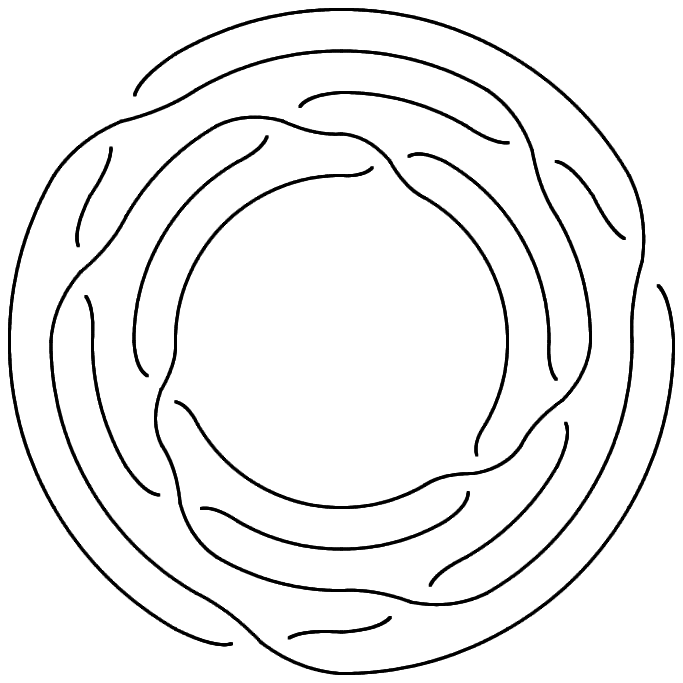} &
\includegraphics[height=0.8in]{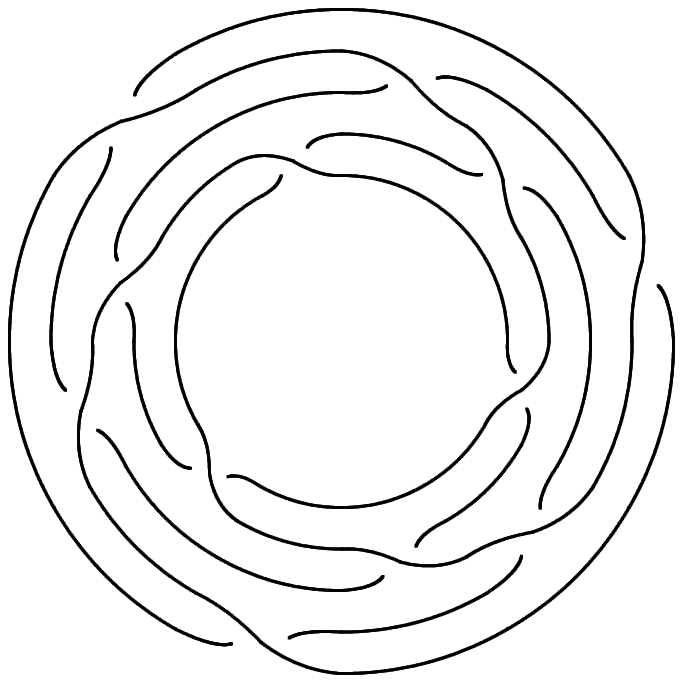} &
\includegraphics[height=0.8in]{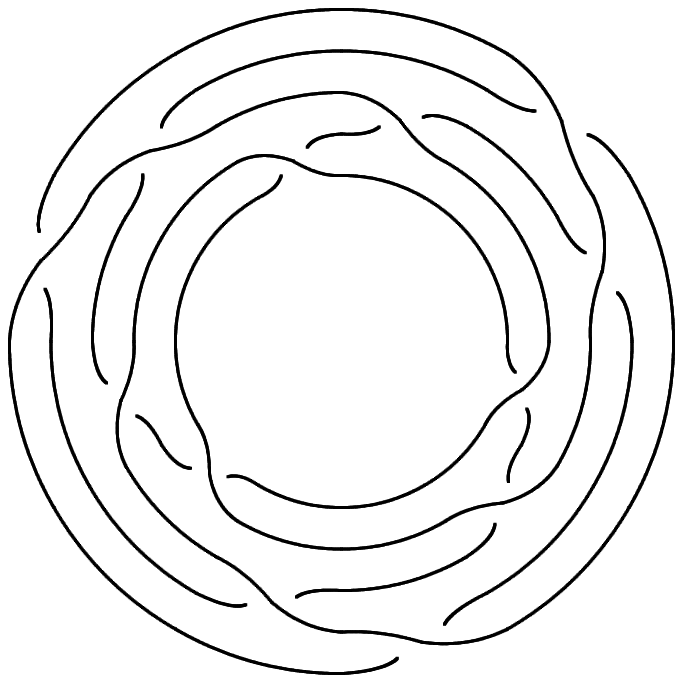} &
\includegraphics[height=0.8in]{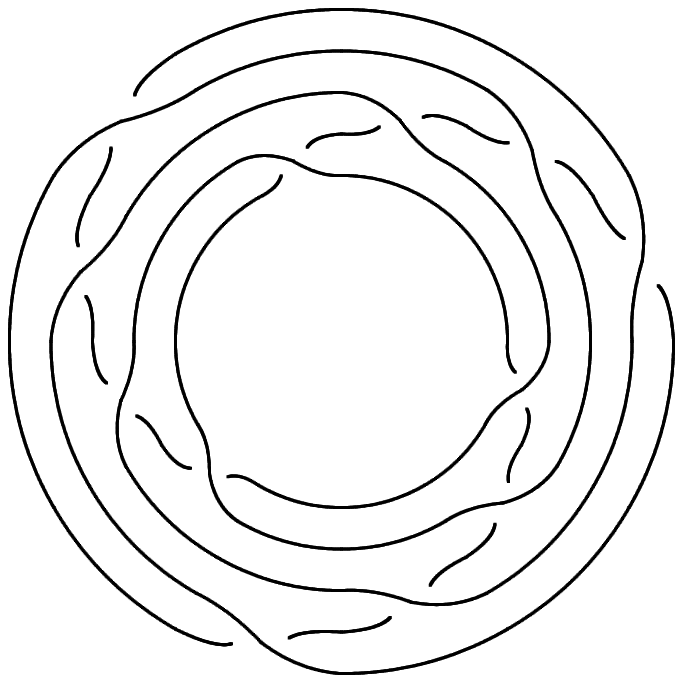} \\
1243&  1324&  2134 & 1234 \\

$\{ 1,2 \}$ & $\{ 1,3 \}$ & $\{ 2,3 \}$ & $\{ 1,2,3 \}$ \\
\end{tabular}
\end{center}
\end{table}

\begin{proof}
Using the fact that $\sigma_i\sigma_j=\sigma_j\sigma_i$ whenever
$|i-j|\geq 2$, we can move any $\sigma_{i_k}$ with $i_k<i_{k-1}$ to
the left in the above expression unless $i_k=i_{k-1}-1$. After
repeating this until no further such moves are possible, the indices
$i_1, i_2,\dots i_{n-1}$ will form the concatenation of some number
$(r+1)$ of monotone decreasing chains,
\begin{equation}\label{eq:normal-braid}
  j_1,j_1-1,\dots,1; j_2,j_2-1,\dots,j_1+1; j_3,j_3-1, \dots, j_2+1;
  \dots; n-1, n-2,\dots, j_r+1.
\end{equation}
This gives a normal form for $\beta$ (up to braid isotopy) that is
determined by $\{j_1,j_2,\dots,j_r\}$, which is a subset of
$\{1,\dots,n-2\}$.  Every subset corresponds to a normal form
expression, so there are $2^{n-2}$ possible normal form expressions.
(The empty set corresponds to the chain $n-1, n-2,\dots, 1$.)

We claim that these normal forms give $2^{n-2}$ different
roots. Indeed, a simple calculation shows that their images in the
permutation group $S_n$ are distinct, so they are distinct.

Now, identify braids in this list if they are equivalent under cyclic
permutation plus the braid relation $\sigma_i\sigma_j =
\sigma_j\sigma_i$ for $|i-j|\geq 2$. It is easy to see that every
braid given in the normal form is cyclically equivalent to either
$\delta_n$ or $\bar\delta_n = \sigma_{n-1}\cdots \sigma_1$.  Moreover, $\bar\delta_n$ is
conjugate to $\delta_n$ by $\Delta$.  Therefore, every braid given by
this normal form is conjugate to $\delta_n$, and hence is an $n$--th
root.
\end{proof}

From a picture, it becomes clear that these braids are roots.  We
illustrate this in Table \ref{roots_table}.  In this table, braids
should be read counterclockwise, starting at $120^\circ$ on the circle.

The following lemma is immediate from the normal form for $\beta$,
particularly the form of the indices $i_1, i_2, \dots, i_{n-1}$ in
equation \eqref{eq:normal-braid}.  We record it here, since we will
use it in the next section.

\begin{lemma}\label{lemma:normal-form}
In the normal form for $\beta=
\sigma_{i_1}\sigma_{i_2}\dots\sigma_{i_{n-1}}$:
\begin{enumerate}
\item[(a)] The generator $\sigma_j$ appears before $\sigma_{j-1}$ and
  $\sigma_{j+1}$, $1< j< n-1$, if and only if the index $j$ is the
  first entry of a decreasing chain of length at least two in equation
  \eqref{eq:normal-braid}.
\item[(b)] Similarly, $\sigma_j$ appears after both $\sigma_{j-1}$ and
  $\sigma_{j+1}$ if and only if the index $j$ is the last entry of a
  decreasing chain of length at least two in equation
  \eqref{eq:normal-braid}.
\item[(c)] The generator $\sigma_1$ appears before $\sigma_2$ if and
  only if $\sigma_1$ is the first generator of the braid word, with
  the index $1$ forming a chain of length one.  The generator
  $\sigma_1$ appears after $\sigma_2$ if and only if the index $1$ is
  the last entry of a decreasing chain of length at least two in
  equation \eqref{eq:normal-braid}.
\item[(d)] The generator $\sigma_{n-1}$ appears before $\sigma_{n-2}$ if
  and only if the index $(n-1)$ is first in a chain of length at least
  two; and $\sigma_{n-1}$ appears after $\sigma_{n-2}$ if and only if
  $\sigma_{n-1}$ is the last generator in the braid word, with
  the index $(n-1)$ forming a chain of length one.\qed
\end{enumerate}
\end{lemma}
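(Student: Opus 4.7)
The plan is to read each of the four statements directly off the chain decomposition described in equation \eqref{eq:normal-braid}. Recall that the normal form partitions the generator indices $\{1,2,\dots,n-1\}$ into blocks $\{j_{i-1}+1,\dots,j_i\}$ (with $j_0=0$ and $j_{r+1}=n-1$), each written in strictly decreasing order, with the blocks concatenated left-to-right in the braid word. Consequently, smaller indices appear in earlier chains than larger indices, while within a single chain, larger indices appear before smaller ones.

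First I would establish two auxiliary observations for any interior index $j$ with $1<j<n-1$:
\begin{enumerate}
\item[(i)] $\sigma_j$ appears before $\sigma_{j-1}$ if and only if $j$ is not the last (smallest) entry of its chain. Indeed, if $j-1$ lies in the same chain as $j$, it is the next letter after $\sigma_j$; if $j-1$ lies in a different chain, it must be in an \emph{earlier} chain, so $\sigma_{j-1}$ precedes $\sigma_j$.
\item[(ii)] $\sigma_j$ appears before $\sigma_{j+1}$ if and only if $j$ is the first (largest) entry of its chain. If $j+1$ lies in the same chain as $j$, then $\sigma_{j+1}$ comes immediately before $\sigma_j$; if $j+1$ lies in a different chain, it lies in a \emph{later} chain, so $\sigma_{j+1}$ follows $\sigma_j$.
\end{enumerate}

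With (i) and (ii) in hand, parts (a) and (b) are immediate: $\sigma_j$ precedes both neighbors iff $j$ is the largest entry of its chain yet not the smallest, which is exactly the statement that $j$ heads a chain of length $\geq 2$; conversely, $\sigma_j$ follows both neighbors iff $j$ is the smallest yet not the largest entry of its chain, which is the statement that $j$ ends a chain of length $\geq 2$. For parts (c) and (d), one simply notes that the index $1$ is always contained in the first chain (as its last entry) and the index $n-1$ is always contained in the last chain (as its first entry), so the relevant conditions reduce to whether these chains have length one or at least two. A chain of length one at the front forces $\sigma_1$ to be the initial letter of the braid word, while a chain of length one at the end forces $\sigma_{n-1}$ to be the terminal letter, matching (c) and (d) exactly.

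There is no substantial obstacle: the entire argument is a careful reading of the normal form. The only subtlety is bookkeeping the boundary indices $j=1$ and $j=n-1$ separately from the interior case, since the general statement in (i)--(ii) refers to both neighbors $\sigma_{j\pm 1}$, one of which does not exist at the boundary.
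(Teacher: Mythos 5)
Your proof is correct, and it follows the same route as the paper, which simply declares the lemma ``immediate from the normal form'' of equation \eqref{eq:normal-braid}: you have just written out explicitly the observation that within a chain consecutive indices appear in decreasing order while across chains smaller indices occupy earlier chains, from which (a)--(d) follow by your observations (i)--(ii). Nothing is missing; your treatment of the boundary indices $1$ and $n-1$ correctly accounts for the one-sided cases.
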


%%%%%%%%%%%%%%%%%%%%%%%%%%%%%%%%%%%%%%%%%%%%%%%%%%%%%%%%%%%%%%%%%
\section{Upper volume bounds}\label{sec:upper-bound}

In this section, we prove Theorems \ref{thm:s-bound}, \ref{thm:sharp2} and
\ref{thm:T-bound}, establishing upper bounds on the volumes of twisted torus links and $T$--links.

\subsection{Twisted torus links}
First, we define $M(p,q,r,s)$, which is a surgery parent manifold to $T(p,q,r,s)$.

\begin{define}
\label{def:mpqr}
Start with integers $p,q>0$ and $r$ so that $0<r\leq p+q$.  Let
$C_1$ be an unknot in $S^3$, so $S^3 \setminus C_1$ is the solid torus
$S^1 \times D^2$, where we view $D^2$ as the unit disk.  Let $C_2$ be
the core of this solid torus, $C_2=S^1\times \{0\}$.  Let $T(p,q)$ be
the $(p,q)$--torus link sitting on $S^1 \times S^1_{1/2}$, where
$S^1_{1/2}$ is the circle at radius $1/2$ in the disk $D^2$.  Augment
$T(p,q)$ with an unknotted circle $L$ that encircles the first $r$
strands.  For any given positive root $\beta \in B_r$, and integer
$s$, replace the $r$ strands of $T(p,q)$ encircled by $L$ with the
braid $\beta^{s}$, and call the result $K(p,q,r,s, \beta)$.  Let
$M(p,q,r,s)$ be the link complement $S^3 \setminus (C_1 \cup C_2 \cup
L \cup K(p,q,r,s,\beta))$.  (As usual, we suppress $\beta$.)
\end{define}

Observe the following facts about $M(p,q,r,s)$.  Let $I=(-1,1)$.
First, since $C_1 \cup C_2$ is the Hopf link, $S^3 \setminus (C_1 \cup
C_2) \cong T^2 \times I$.  Hence, $M(p,q,r,s)$ is homeomorphic to the
complement of a link of at least two components in $T^2 \times I$, one
component corresponding to $L$, and the others to $K(p,q,r,s,\beta)$
of Definition \ref{def:mpqr}.  We will illustrate examples of
$M(p,q,r,s)$ by drawing links in $T^2 \times I$. We will also use the
framings induced from $T^2 \times \{0\}$ on $C_1$ and $C_2$.

Second, the twisted torus link $T(p,q,r,s)$ is obtained from
$M(p,q,r,s)$ by Dehn filling along slopes $(0,1)$ and $(1,0)$ on $C_1$
and $C_2$, respectively, and along the slope $(1,0)$ on $L$.  More generally, we
obtain a twisted torus link of the form $T(p',q',r, Nr+s)$ by Dehn
filling $L$ along the slope $(1,N)$ for any integer $N$, and Dehn
filling $C_1$ and $C_2$ along slopes with geometric intersection number $1$.  
For example, see
\cite{Rolfsen}.  Since volume only decreases under Dehn filling
\cite{Thurston}, we wish to bound the volume of $M(p,q,r,s)$.

Finally, the homeomorphism type of $M(p,q,r,s)$ is summarized by the
following lemma.

\begin{lemma}
\label{lemma:Mpqr-homeo}
  For $r>\gcd(p,q)$, $M(p,q,r,s)$ is homeomorphic to $M(n,m,r,s')$ or
  $M(m,n,r,s')$, where $s' = s \mod r$, $0\leq s' < r$, and $n$ and
  $m$ come from a truncated continued fraction expansion of $p/q$.
  Precisely, $n$ and $m$ are integers satisfying $0<n<r$,
  $0<m<r$, $n+m\geq r$, and
$$ \frac{p}{q} = a_0 + \cfrac{1}{a_1 + \cfrac{1}{\ddots \cfrac{1}{a_k + m/n}}}  \: .$$
where $a_i$ are positive integers for $1 \leq i \leq k$. 
\end{lemma}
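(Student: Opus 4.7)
My plan is to split the homeomorphism into two independent stages: first reduce the exponent $s$ modulo $r$, then reduce the pair $(p,q)$ to $(n,m)$ via an $SL(2,\Z)$-action on the Hopf link exterior corresponding to the Euclidean algorithm.

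\emph{Stage 1 (reducing $s$).} The unknot $L$ bounds a disk $D \subset S^3$ meeting $K(p,q,r,s,\beta)$ transversely in exactly $r$ points, one per strand running through $L$. Since $L$ has linking number zero with each of $C_1$ and $C_2$, we may choose $D$ disjoint from $C_1 \cup C_2$. A Dehn twist supported in a collar of $D$ is then a self-homeomorphism of $S^3 \setminus (C_1 \cup C_2 \cup L)$ whose only effect on the braid passing through $L$ is multiplication by $\Delta_r^{\pm 2} = \beta^{\pm r}$. Iterating this twist yields $M(p,q,r,s) \cong M(p,q,r,s')$ with $0 \le s' < r$.

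\emph{Stage 2 (reducing $(p,q)$).} Under the identification $S^3 \setminus (C_1 \cup C_2) \cong T^2 \times \R$, every $\phi \in SL(2,\Z)$ extends from $T^2 = \R^2/\Z^2$ to a self-homeomorphism of the Hopf link exterior acting trivially on the $\R$-factor. Such a $\phi$ carries $T(p,q)$ on the level torus to $T(\phi(p,q))$ and carries the twist region around $L$ (with its internal braid $\beta^{s'}$) to a corresponding twist region around $\phi(L)$ for the new torus link. By the translational symmetry of a torus link along the direction of its strands, any loop encircling $r$ parallel strands via a short transverse arc is isotopic to the standard one, so the resulting configuration is identifiable with $M(\phi(p,q), r, s')$. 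Applying successive elementary $SL(2,\Z)$ reductions $(p,q) \to (p - aq, q)$ or $(p,q) \to (p, q - ap)$ is the Euclidean algorithm on $p/q$; the accumulated matrix product encodes the truncated continued fraction expansion in the statement, and the parity of coordinate swaps used decides whether the output appears as $(n,m)$ or $(m,n)$.

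\emph{Main obstacle.} I expect the principal technical point to be verifying the stopping criterion and the inequality $n+m\geq r$. Geometrically, a short transverse arc on $T^2$ can meet $r$ consecutive parallel strands of the $(n,m)$-torus link only when $r \le n + m$; if $n + m < r$, no such arc exists without wrapping around the torus, and the standard loop $L$ of Definition \ref{def:mpqr} cannot be placed. Halting the Euclidean algorithm at the first pair $(n,m)$ with $0 < n, m < r$ is therefore exactly the condition that one further step would drop $n + m$ below $r$, producing the required inequality automatically. Making this rigorous requires tracking how the short arc spanned by $L$ transforms under each $SL(2,\Z)$ reduction and comparing it with the minimum length of a transverse arc meeting $r$ strands of the new torus link, after which the displayed continued fraction form simply records the sequence of reductions performed.
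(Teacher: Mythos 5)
Your proposal is correct and follows essentially the same route as the paper: first remove full twists through the disk bounded by $L$ (using $\beta^r=\Delta_r^2$) to reduce $s$ modulo $r$, then apply Dehn twists along the meridian and longitude of $T^2$ (your $SL(2,\Z)$ action on the Hopf link exterior, i.e.\ on $T^2\times I$) together with the truncated Euclidean algorithm to reduce the slope $p/q$ to $m/n$. The only remark is that your anticipated obstacle is simpler than you fear: stopping the algorithm at the first pair with both entries less than $r$ gives $n+m\geq r$ purely arithmetically, since the last subtraction removed $m<r$ from a value that was at least $r$, so no geometric argument about transverse arcs is needed.
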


\begin{proof}
First, since $L$ is an unlink encircling the $r$ strands of the braid
$\beta^s$, $M(p,q,r,s)$ is homeomorphic to $M(p,q,r,s+jr)$ for any
integer $j$.  Thus in particular it is homeomorphic to $M(p,q,r,s')$
where $s'= s \mod r$ and $0\leq s' <r$.

Next, since $M(p,q,r,s)$ is a link complement in $T^2\times I$, it
will be homeomorphic to the link complement obtained by Dehn twisting
an integer number of times about the meridian or longitude of $T^2$.
Thus applying the (truncated) Euclidean algorithm to the slope $p/q$
on $T^2$, we may reduce the slope to some $m/n$ with $m<r$, $n<r$,
$m+n\geq r$.  The process gives the truncated continued fraction
expansion of $p/q$.
\end{proof}

Figure \ref{fig:Mnmr} illustrates the proof of Lemma
\ref{lemma:Mpqr-homeo} when $p=3$, $q=7$, $r=5$, and $s=0$.  In that
figure, Dehn twist about the horizontal curve $(0,1)$, and isotope to
obtain $M(3,4,5,0)$.

\begin{figure}
  \begin{center}
    \includegraphics{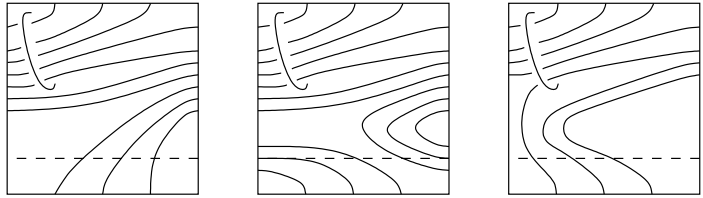}
  \end{center}
  \caption{$M(3,7,5,0)$ is homeomorphic to $M(3,4,5,0)$.}
  \label{fig:Mnmr}
\end{figure}

Our goal is to bound the simplicial volume of $M(p,q,r,s)$. Recall that for
any compact $3$--manifold $M$, whether closed or with boundary, the
\emph{simplicial volume} is defined to be ${\rm Vol}(M)= v_3\, || M ||$, where
$|| M ||$ is the Gromov norm of $M$. See \cite[Chapter 6]{Thurston} 
for background on the Gromov norm. For our purposes, we will need three properties:
\begin{itemize}
\item $|| M ||$ is bounded above by the number of (compact or ideal) tetrahedra needed to triangulate $M$.
\item $|| M ||$ is non-increasing under Dehn filling.
\item If $M$ is hyperbolic, then $v_3\, || M ||$ is the hyperbolic volume of $M$. Thus there is no ambiguity in the notation $\vol(M)$.
\end{itemize}

Combining these properties, we conclude that $v_3$ times the number of ideal tetrahedra in a triangulation of $M(p,q,r,s)$ provides an upper bound on the volume of any of its hyperbolic Dehn fillings. This upper bound applies regardless of whether $M(p,q,r,s)$ is hyperbolic.

%We will be subdividing manifolds into ideal tetrahedra.  
The following
straightforward lemma will assist us in counting the number of
 tetrahedra in an ideal triangulation of $M(p,q,r,s)$.

\begin{lemma}
\label{lemma:subdivide}
Two pyramids glued along a common base, a polygon with $t$ sides, may
be subdivided into $t$ tetrahedra.  In the case $t=3$, the pyramids
may be subdivided into $2$ tetrahedra.
\end{lemma}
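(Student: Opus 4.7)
The plan is to exhibit explicit subdivisions in the two cases. Label the base polygon vertices $v_1, \dots, v_t$ cyclically, and call the two apices $T$ and $B$.

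For $t = 3$ the base is already a triangle, so each of the two pyramids is itself a tetrahedron (a pyramid over a triangle). The two original pyramids, glued along their common triangular face, therefore realize the claimed $2$-tetrahedron subdivision with no further work.

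For general $t \geq 3$, I would introduce a single auxiliary interior edge, the \emph{axis}, connecting $T$ to $B$, and declare the subdivision to consist of the $t$ tetrahedra
\[
\Delta_i \ = \ \{\,T,\ B,\ v_i,\ v_{i+1}\,\}, \qquad i = 1, \dots, t,
\]
with indices taken mod $t$. Three things need to be checked, all immediate from the picture: (i) consecutive tetrahedra $\Delta_{i-1}$ and $\Delta_i$ meet exactly along the triangular face $\{T,B,v_i\}$ containing the axis; (ii) each slanted boundary face of the bipyramid, namely $\{T, v_i, v_{i+1}\}$ and $\{B, v_i, v_{i+1}\}$, appears as a face of exactly one $\Delta_i$; and (iii) the interiors of the $\Delta_i$ are disjoint and their union fills the bipyramid, since slicing the bipyramid by the $t$ half-planes through the axis and each $v_i$ produces precisely these $t$ wedges. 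This yields the required count of $t$ tetrahedra.

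There is no serious obstacle; the verification is a combinatorial subdivision of a bipyramid. The one conceptual point worth highlighting is that adding the axis $TB$ is what makes the count linear in $t$: triangulating each pyramid separately via a fan from a base vertex would give $2(t-2)$ tetrahedra, but the axis allows two opposing wedges to be amalgamated into one tetrahedron, cutting the count to exactly $t$. In the ideal-triangulation setting where this lemma will be applied, the whole argument is purely combinatorial, so no geometric realizability question arises. Finally, the $t = 3$ refinement from $3$ down to $2$ is just the observation that in that case the axis is wasteful, since each pyramid is already a single tetrahedron.
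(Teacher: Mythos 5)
Your proposal is correct and matches the paper's argument: the paper likewise notes that for $t=3$ each pyramid is already a tetrahedron, and for $t>3$ it removes the base polygon, adds the edge joining the two apices, and performs the stellar subdivision producing one tetrahedron per base edge — exactly your tetrahedra $\{T,B,v_i,v_{i+1}\}$. Your write-up simply spells out the face-matching checks that the paper leaves implicit.
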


\begin{proof}
  If $t=3$, each pyramid is a tetrahedron, and there is nothing to
  prove.  If $t>3$, then remove the base polygon, and add an edge
  running between the two tips of the pyramids.  Perform stellar
  subdivision, obtaining one tetrahedron for each of the $t$ edges of
  the base polygon.
\end{proof}

\begin{lemma}
\label{lemma:tetrbound1}
Let $n$, $m$, and $r$ be integers such that $n<r$, $m<r$, and $n+m\geq
r$.  The manifold $M(n,m,r,0)$ can be decomposed into $t$ ideal
tetrahedra, where 
% $t$ is equal to:
\begin{equation*}
t=
\begin{cases}
  10 & \text{if $r=2$,} \\
  2r+8 & \text{if $n+m=r$ and $r>2$,} \\
  2r+10 & \text{otherwise.}
\end{cases}
\end{equation*}
\end{lemma}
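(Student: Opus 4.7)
My plan is to construct an explicit ideal polyhedral decomposition of $M(n,m,r,0)$ and then count the tetrahedra needed to subdivide each polyhedron via Lemma~\ref{lemma:subdivide}.

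The first step is to realize $M(n,m,r,0)$ as $(T^2 \times I) \setminus (T(n,m) \cup L)$, where $T(n,m)$ consists of $\gcd(n,m)$ parallel curves of slope $m/n$ on the middle torus $\Sigma = T^2 \times \{1/2\}$, and $L$ is an unknot bounding an embedded disk $D$ which meets $\Sigma$ transversely in an arc $\alpha$ crossing $T(n,m)$ in exactly $r$ points. Because $n < r$, $m < r$, and $n + m \geq r$, I would set things up so that $\alpha$ lies in a single fundamental rectangle of $\Sigma$, with the lifts of $T(n,m)$ appearing as lines of slope $m/n$ inside that rectangle.

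I would then cut $M$ along $(\Sigma \cup D) \setminus N(T(n,m) \cup L)$. Since $\Sigma$ separates $T^2 \times I$ into two halves exchanged by a reflection, and $D$ is bisected by $\alpha$ into half-disks $D_+$ and $D_-$, one in each half, the resulting pieces come in mirror pairs. To turn each piece into a $3$-ball, a small amount of additional cutting (for example along a vertical annulus coming from the boundary of a fundamental rectangle of $T^2$) is needed. The result is a finite collection of ideal polyhedra whose faces correspond to regions of $\Sigma \setminus (T(n,m) \cup \alpha)$, to the $r$-gon half-disks $D_\pm$, and to strips on the cusp tori $T^2 \times \{0,1\}$ coming from $C_1$ and $C_2$.

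Next I would count these faces as a function of $n, m, r$. When $n + m = r$, the arc $\alpha$ crosses every strand of $T(n,m)$ visible in its fundamental rectangle and the region count is minimal; when $n + m > r$, at least one strand of $T(n,m)$ lies outside $\alpha$, contributing extra rectangular faces. By pairing mirror polyhedra through $\Sigma$, each pair becomes a bipyramid over a common polygonal base, and Lemma~\ref{lemma:subdivide} subdivides a $t$-gon bipyramid into $t$ tetrahedra in general but only $2$ when $t = 3$. Summing the contributions from all the pairs should produce the totals $10$, $2r+8$, and $2r+10$ in the three stated cases.

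The main obstacle is pinning down the polyhedral combinatorics so that the counts match precisely. In particular, one must check that exactly the $r = 2$ case produces a bipyramid with triangular base (so Lemma~\ref{lemma:subdivide} saves two tetrahedra, giving $10$ rather than the naive $12$), and that each additional strand of $T(n,m)$ outside $\alpha$, which appears only when $n + m > r$, contributes exactly two more tetrahedra than in the case $n + m = r$. Carrying out this bookkeeping carefully is the combinatorial heart of the proof.
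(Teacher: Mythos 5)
Your overall strategy---cut along the projection torus, flatten the half--disks of the disk bounded by $L$ onto $T^2\times\{0\}$, pair the resulting pieces into bipyramids over the polygons there, and count via Lemma~\ref{lemma:subdivide}---is essentially the paper's. But the bookkeeping you defer as ``the combinatorial heart'' is exactly where the proposal fails, and the one quantitative claim you do make contradicts the statement being proved. You assert that each strand of $T(n,m)$ not met by the arc $\alpha$ (present exactly when $n+m>r$) contributes two extra tetrahedra beyond the $n+m=r$ count. Since $n+m$ can be as large as $2r-2$, that accounting gives a total on the order of $2r+8+2(n+m-r)$; for instance $n=m=r-1$ would yield about $4r+4$, exceeding the claimed bound $2r+10$ once $r>3$. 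The bound in the lemma is independent of $n+m-r$, and the reason is the idea your outline is missing: after flattening the half--disks, every complementary region of $T^2\times\{0\}$ not adjacent to an end of the flattened half--disk is a bigon between parallel strands, and bigons are collapsed to ideal edges, contributing no tetrahedra at all. The only non-bigon regions are those meeting the two ends of the half--disk, and these glue up to form either two quadrilaterals, or a single hexagon precisely when both ends lie in the same complementary region, which happens if and only if $n+m=r$. So the difference between the two cases is a fixed $8-6=2$ tetrahedra, not a per-strand contribution.

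With that analysis in place the count is: $2(r+1)$ tetrahedra from the two pairs of flattened $(r+1)$-gons (improved to $4$ when $r=2$, since the $(r+1)$-gons are then triangles and Lemma~\ref{lemma:subdivide} gives $2$ each), plus $6$ tetrahedra from the hexagon when $n+m=r$ or $8$ from the two quadrilaterals otherwise; and $r=2$ forces $n=m=1$, hence the hexagon case and the total $4+6=10$. Without the bigon-collapse step and the identification of exactly which complementary regions fail to be bigons, your decomposition has a number of faces growing with $n$ and $m$, and the $n,m$-independent totals $10$, $2r+8$, $2r+10$ cannot be recovered. (Your extra cut along a vertical annulus is unnecessary---coning each polygon up and down to the cusps $T^2\times\{\pm1\}$ already produces the paired pyramids of Lemma~\ref{lemma:subdivide}---but that is harmless; the missing bigon analysis is the genuine gap.)
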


\begin{proof}
Recall $M(n,m,r,0)$ is the complement of a link in $T^2\times (-1,1)$,
with the torus link $T(n,m)$ on $T^2 \times \{0\}$.  Put $L$
perpendicular to $T^2\times \{0\}$, meeting it transversely in two
points.

First, cut along the torus $T^2 \times \{0\}$, as shown in Figure
\ref{fig:twtorus-decomp}\,(left).  This divides the manifold into two
pieces.  By cutting along the torus, the disk bounded by the component
$L$ has been cut into two.  Slice up the middle of each half--disk and
open it out, flattening it onto $T^2 \times \{0\}$, as in Figure
\ref{fig:twtorus-decomp}\,(middle). Each half--disk has been sliced
open into two parts, each of which is an $(r+1)$--gon.  In Figure
\ref{fig:twtorus-decomp}\,(right), an example is shown for $r=5$.
%After collapsing bigons in the
%common boundary of these two pieces, we obtain a polygon, as in
%Figure \ref{fig:twtorus-decomp}\,(right).

\begin{figure}
	\includegraphics{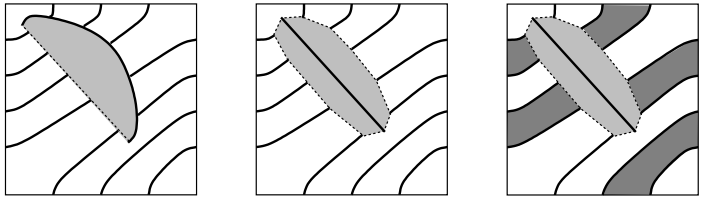}
	\caption{To decompose $M(p,q,r,0)$ into tetrahedra, a
          half-disk is sliced and flattened onto $T^2 \times \{0\}$.
          Thick lines are components of the link.  Dotted lines are
          edges of a polyhedral decomposition.  One or two polygons remain
          after collapsing the shaded bigons.}
\label{fig:twtorus-decomp}
\end{figure}

Outside the flattened half--disk, regions are bigons, which we collapse to
ideal edges as in Figure 4 (right), and either two quadrilaterals or a
single hexagon adjacent to the ends of the half-disks.
%Outside the flattened half--disk, regions are either bigons, each of which
%collapses to an ideal edge, or quadrilaterals or hexagons
%adjacent to either end of the half--disk.  
This can be seen as follows.  View $T^2\times \{0\}$ as a rectangle
with the usual side identifications to form the torus.  Any region $U$
outside the half--disk (as in Figure
\ref{fig:twtorus-decomp}\,(middle)) will either meet a single edge of
the half--disk, two edges if $U$ is adjacent to an end of the
half--disk, or zero edges if $U$ is not adjacent to the half--disk.
Such regions are glued according to the identifications for a torus.
Consider the regions meeting ends of the half--disk.  Each of these
two regions is glued to exactly two other regions.  Either each of the
two regions glue up to regions meeting only a single edge of the
half--disk, in which case both regions are quadrilaterals, or the two
regions may glue to meet each other.  In this second case, the regions
must additionally glue to regions meeting exactly one edge, so we have
a hexagon.

The case of two quadrilaterals is the case of Figure
\ref{fig:twtorus-decomp}.  Since we obtain a hexagon only when both
ends of the flattened half-disk belong to the same complementary
region, this occurs if and only if $n+m=r$.  Finally, any remaining
regions meet one or zero edges, and must glue to form bigons; two
bigons are shaded dark in Figure \ref{fig:twtorus-decomp}\, (right).

Now cone each sliced half--disk, quadrilateral, and hexagon to the
boundary component $T^2\times \{1\}$, as well as to the boundary
component $T^2\times \{-1\}$.  This gives a decomposition of the
manifold $M(n,m,r,0)$ into pyramids.  Since the half disks are
identified to each other and the outside regions are identified along the
two pieces, the pyramids are glued in pairs along the regions on $T^2\times
\{0\}$.  By Lemma \ref{lemma:subdivide}, we may subdivide into:
\begin{itemize}
  \item $2(r+1)$ tetrahedra for two pairs of $(r+1)$--gons if $r>2$.
    If $r=2$, improve this to $4$ tetrahedra.
  \item $6$ tetrahedra for the single hexagon, if $n+m=r$.  Otherwise,
    $8$ tetrahedra for two quadrilaterals.
\end{itemize}
Observe that if $r=2$, then $n=m=1$ so we will have a hexagon in this
case.  Adding together these counts gives the result.
\end{proof}

\begin{lemma}
\label{lemma:tetrbound2}
  Let $r>2$.  Let $n$, $m$, $r$ and $s$ be integers such that $n<r$,
  $m<r$, $n+m\geq r$, and $0<s<r$.  For any positive root $\beta\in
  B_r$, the manifold $M(n,m,r,s)$ can be decomposed into at most $t$ ideal
  tetrahedra, where
\begin{equation*}
t=
\begin{cases}
%  rs+3r-s+7 & \text{if $\beta=\delta_r$ and $m+n=r$,} \\ TRUE BUT NOT USED BELOW
  rs+3r-s+9 & \text{if $\beta=\delta_r$ or $\bar\delta_r$,} \\
  rs+6r-s+3& \text{otherwise.}
\end{cases}
\end{equation*}
\end{lemma}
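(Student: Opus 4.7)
The plan is to extend the polyhedral decomposition used in Lemma \ref{lemma:tetrbound1} to accommodate the braid $\beta^s$, which for $s>0$ introduces $(r-1)s$ new crossings inside the disk bounded by $L$. The target counts, roughly $(r-1)s$ plus a linear-in-$r$ base, suggest that the strategy should contribute about one tetrahedron per crossing of $\beta^s$, plus a fixed amount coming from the $s=0$ skeleton and from the boundary of the braid box.

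I would proceed in the same spirit as Lemma \ref{lemma:tetrbound1}: cut $M(n,m,r,s)$ along $T^2\times\{0\}$ and flatten the two half-disks bounded by $L$ onto this torus. The \emph{exterior} of the flattened half-disks on $T^2\times\{0\}$ has the same combinatorial structure as in the $s=0$ case (either a single hexagon or two quadrilaterals) and still contributes at most $8$ ideal tetrahedra after coning to $T^2\times\{\pm1\}$ and applying Lemma \ref{lemma:subdivide}. The new task is to handle the \emph{interior} of each flattened half-disk, which is no longer a single $(r+1)$-gon but is subdivided by the $(r-1)s$ crossings of $\beta^s$ into smaller polygonal subregions.

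To obtain the uniform bound $rs + 6r - s + 3$ valid for any positive root $\beta$, I would count the number and shape of these subregions using Euler's formula on the braid diagram, treating crossings as $4$-valent vertices and arcs of strands as edges, then cone each subregion to $T^2\times\{\pm 1\}$ and apply Lemma \ref{lemma:subdivide}. To obtain the sharper bound $rs + 3r - s + 9$ when $\beta=\delta_r$ or $\bar\delta_r$, I would exploit Lemma \ref{lemma:normal-form}: for the standard root, each copy of $\beta$ is a single monotone chain of length $r-1$, so consecutive generators share strands and produce bigons between adjacent crossings. These bigons can be collapsed to single ideal edges before coning, or equivalently, adjacent pyramids sharing triangular faces can be merged, saving roughly $3(r-2)$ tetrahedra across the whole braid and accounting for the improvement of $3r-6$ between the two formulas.

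The main obstacle is the bookkeeping in the standard-root case. I would need to verify that the bigon collapses at the junction between consecutive copies of $\beta$ are mutually consistent, that collapses at the top and bottom of $\beta^s$ are compatible with the identification along $T^2\times\{0\}$, and that the resulting complex still genuinely decomposes $M(n,m,r,s)$ into ideal cells. Lemma \ref{lemma:normal-form} is the precise tool for tracking where each $\sigma_j$ sits relative to $\sigma_{j\pm 1}$, which determines which bigons appear and which collapses are legal; the remaining work is a finite case analysis based on the endpoints of the monotone chains. This careful accounting—rather than a more extravagant global rearrangement—is likely where the referee's improvement alluded to in the acknowledgements enters, and it is the step most prone to off-by-one errors in the constants $+9$ and $+3$ in the final formula.
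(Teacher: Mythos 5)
Your overall set-up (cut along $T^2\times\{0\}$, flatten the half-disks bounded by $L$, analyze the braid region via an Euler characteristic count) matches the paper's, but the central counting step does not work as you propose. If you cone every subregion of the projection of $\beta^s$ to $T^2\times\{\pm 1\}$ and subdivide via Lemma \ref{lemma:subdivide}, each inner quadrilateral costs $4$ tetrahedra and each inner triangle costs $2$. Since the braid region contains $(r-3)(s-1)$ inner quadrilaterals (plus inner triangles), this alone produces a leading term of roughly $4rs$, far above the claimed $rs - s + \cdots$. The paper reaches the $rs$ leading term by a genuinely different device (the referee's improvement): at each quadrilateral it inserts a single \emph{medial} tetrahedron, and then every non-peripheral triangle visible from the top (or bottom) is glued to an adjacent triangle with which it shares two edges, so that the pair collapses to a single ideal edge (Lemma \ref{lemma:collapse}); these triangles are never coned at all, and each quadrilateral contributes exactly one tetrahedron. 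Without this insertion-and-collapse mechanism, ``cone each subregion and apply Lemma \ref{lemma:subdivide}'' cannot come down to $rs+6r-s+3$.

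Two further claims in your outline conflict with the actual combinatorics. First, the exterior of the flattened half-disks is \emph{not} combinatorially the same as in the $s=0$ case: the regions adjacent to the ends of the half-disks also run along the sides of the braid and pick up the $s$ edges flanking the overpasses, so they are two $(s+4)$--gons or a single $(2s+6)$--gon, contributing $2(s+4)$ tetrahedra in the general case rather than at most $8$; for $\beta=\delta_r$ or $\bar\delta_r$ the paper subdivides them further into hexagons (or a decagon) plus $2(s-2)$ triangles and collapses on one side. Second, for the standard root there are no interior bigons to collapse --- bigons occur only at the start and end of the braid --- so your proposed saving of roughly $3(r-2)$ from bigons ``between adjacent crossings'' is not the source of the difference $3r-6$ between the two formulas; in the paper that improvement comes from the fact that $\delta_r,\bar\delta_r$ have no peripheral quadrilaterals ($Q_p=0$, $T_p=2(r-2)$, by Lemmas \ref{lemma:normal-form} and \ref{lemma:braid-quads}) together with the additional collapsing of the exterior regions just described.
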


\begin{proof}
This proof is similar in spirit to that of Lemma \ref{lemma:tetrbound1}:  we subdivide $M(n,m,r,s)$ into two ``polyhedral'' pieces along a $2$--complex roughly corresponding to the projection torus $T^2 \times \{0\}$. Each of these pieces can be subdivided into tetrahedra by coning to $T^2 \times \{ \pm 1 \}$. 

As above, the disk bounded by $L$ gets cut into two,
each half--disk gets sliced up the middle and flattened out into two
$(r+1)$--gons on $T^2 \times \{0\}$. However, when $s \neq 0$, we now have a braid $\beta^s$, which can be
seen as crossings on the torus $T^2 \times \{ 0 \}$. Encircle the braid $\beta^s$ in
$T^2\times \{0\}$ with a simple closed curve $\gamma$, separating this braid
from the rest of the diagram. (See Figure \ref{fig:s-decomp}.) 
Note that the diagram outside
this curve $\gamma$ agrees with the diagram of $M(n,m,r,0)$. From here, the argument will proceed in four steps:

\begin{enumerate}
\item[Step 1.] Count and characterize the polygons inside $\gamma$. This is done in Lemma \ref{lemma:braid-quads}.

\item[Step 2.] At each quadrilateral bounded by the projection of $\beta^s$, insert a \emph{medial tetrahedron}, as in Figure \ref{fig:inner_tet}. 

\item[Step 3.] Glue the faces of medial tetrahedra to certain adjacent triangles. This ``collapsing'' process, carried out in Lemma \ref{lemma:collapse} (see Figures \ref{fig:sailboat} and \ref{fig:inner_tet}), reduces the number of faces visible from $T^2 \times \{ \pm 1 \}$.

\item[Step 4.] Cone all remaining faces to $T^2 \times \{ \pm 1 \}$, and count the resulting tetrahedra. This will complete the proof.

\end{enumerate}

\begin{figure}
  \begin{tabular}{ccc}
  \begin{overpic}[height=1.5in]{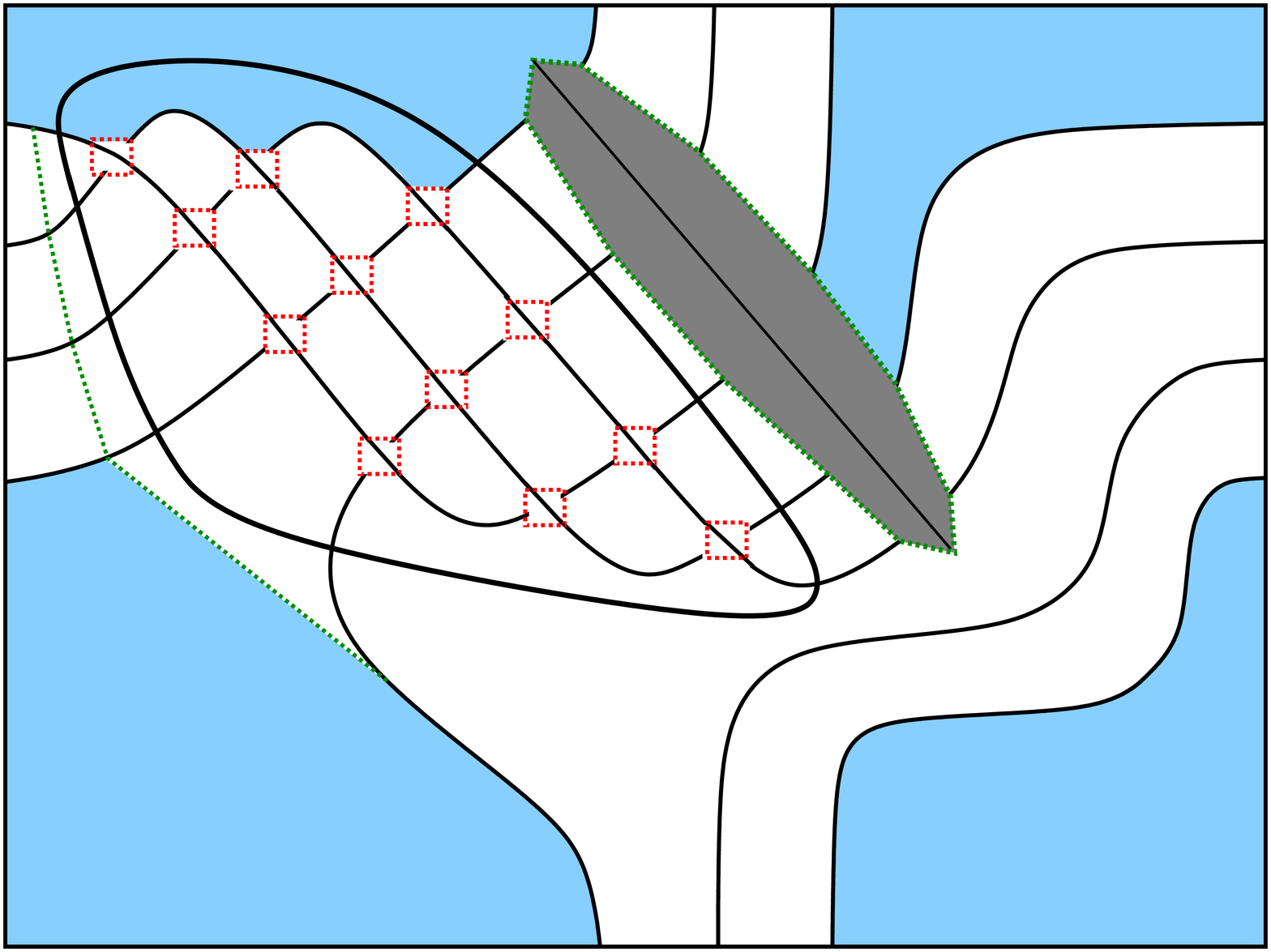}
  \put(40,22){$\gamma$}
  \end{overpic}
  \ \ &
\ \ 	\begin{overpic}[height=1.5in]{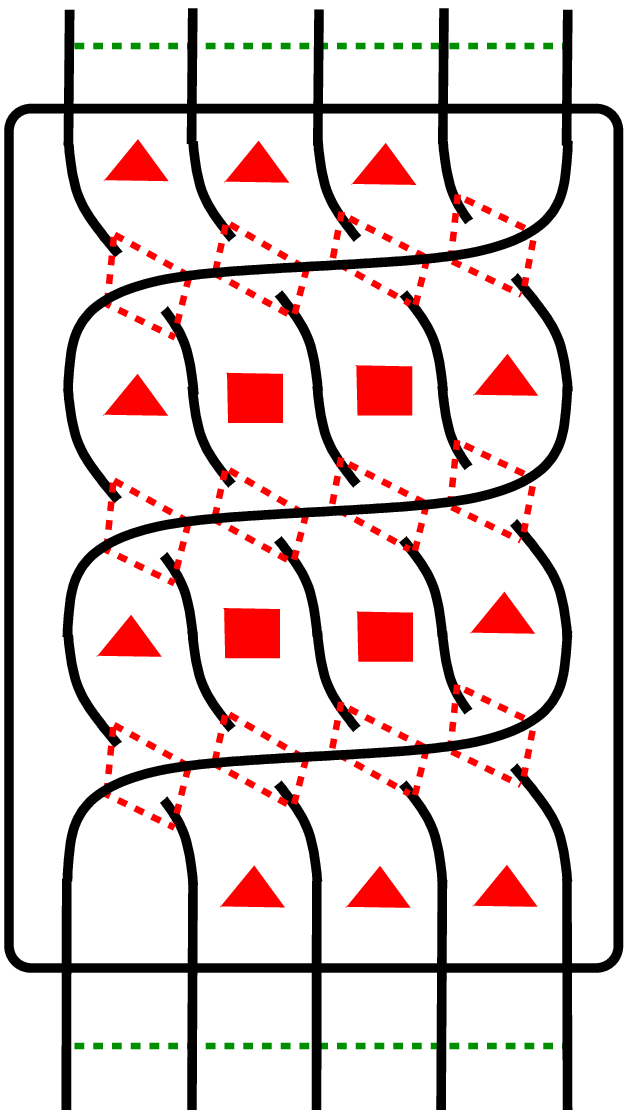} 
  \put(60,80){$\gamma$}
\end{overpic}
\  \ \  &  \  \includegraphics[height=1.5in]{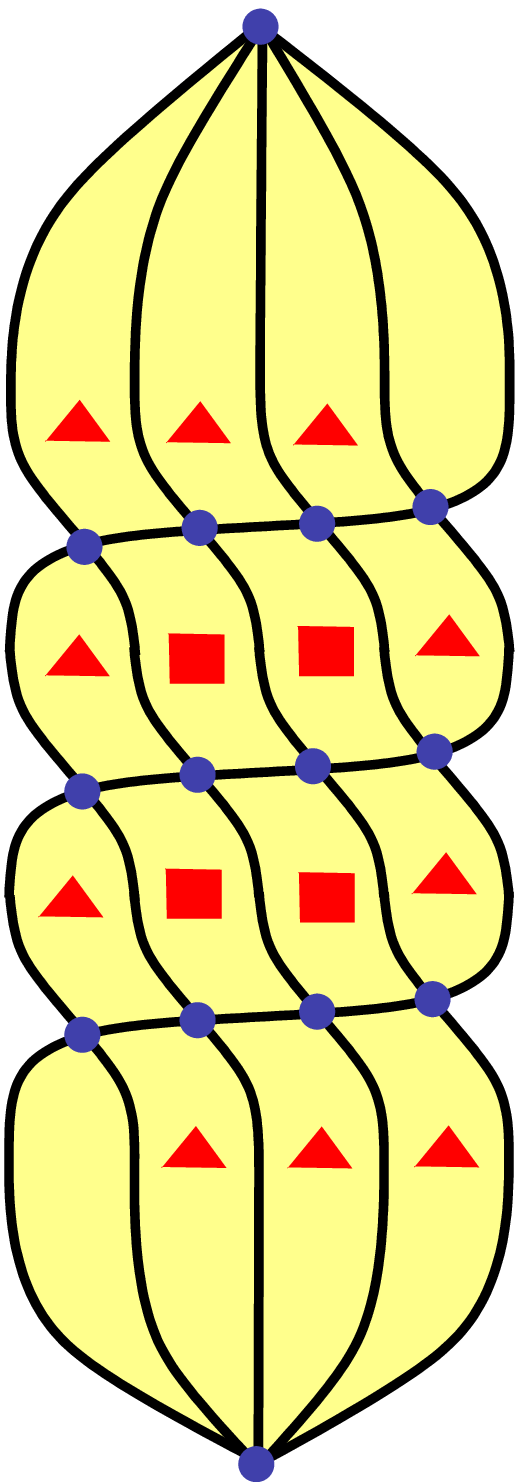} \\
    (a) & (b) & (c)
  \end{tabular}
  \caption{(a) Decompose $M(n,m,r,s)$ into tetrahedra, here $r=5$,
    $s=3$ and $\beta=\bar\delta_5$.  Thick lines are link components, dotted
    lines are polygon edges.  This figure differs from Figure
    \ref{fig:twtorus-decomp}\,(middle) only in the encircled
    region. (b) For $\beta=\bar\delta_5$, schematic figure for
    $\beta^3$ with triangles and quadrilaterals indicated. (c) The
    disk triangulated with the projection of $\beta^3$. }
\label{fig:s-decomp}
\end{figure}

To begin the argument, we add an edge for every crossing of $\beta^s$. 
We also add edges, called {\em peripheral edges}, at the start and end of the
braid, as in Figure \ref{fig:s-decomp}\,(b). 
Every face bounded by these edges must be a quadrilateral, a triangle, or a bigon.
No other polygons can occur because
$\beta=\sigma_{i_1}\sigma_{i_2}\dots\sigma_{i_{n-1}}$ is a word formed
by a permutation of the $n-1$ generators $\sigma_1, \dots,
\sigma_{n-1}$, hence in the word $\beta^s$, no generator $\sigma_j$
appears twice before a single appearance of $\sigma_{j+1}$ or
$\sigma_{j-1}$.
Moreover, bigons can only occur adjacent to the start or end of the braid.

Quadrilaterals and triangles that contain peripheral edges (i.e., that
are adjacent to the start or end of the braid) will be called {\em
  peripheral} faces, and otherwise they will be called {\em inner}
faces.  Inner triangles can only occur on the sides of the region of
$\beta^s$.  Let $Q_i,\, Q_p,\, T_i,\, T_p$ denote the number of inner
quadrilaterals, peripheral quadrilaterals, inner triangles, and
peripheral triangles, respectively.  For example, in Figure
\ref{fig:s-decomp}\,(b), $Q_i=4,\, Q_p=0,\, T_i=4,\, T_p=6$.

\begin{lemma}
\label{lemma:braid-quads}
  For any positive root $\beta\in B_r$, the region inside the curve $\gamma$ of $\beta^s$ contains 
  \begin{itemize}
\item $\ Q_i = (r-3)(s-1)\ $ inner quadrilaterals, 
\item $\ Q_p \leq (r-2)\ $ peripheral quadrilaterals, and 
\item $\ T_p + Q_p \leq 2(r-2)\ $ peripheral faces that are not bigons.
\end{itemize}
If $\beta=\delta_r$ or $\bar\delta_r$, there are no peripheral quadrilaterals and $T_p = 2(r-2)$ peripheral triangles.
%	and faces other than inner quadrilaterals can be
%	  subdivided into $2(r+s-3)$ triangles.  If $\beta=\delta_r$ or
%	  $\bar\delta_r$, there are no peripheral quadrilaterals and
%	  $2(r-2)$ peripheral triangles.
\end{lemma}

\begin{proof}
Let $T=T_i+T_p$ and $Q=Q_i+Q_p$, and let $B$ be the number of bigons.
The triangles, quadrilaterals, and bigons of $T^2\times \{0\}$ are
nearly in one-to-one correspondence with triangles, quadrilaterals,
and bigons of the projection graph of $\beta^s$, except at the start
and end of the braid.  We make the correspondence complete by pulling
all strands at the start of the braid projection graph into a single
vertex, and all strands at the end of the braid projection graph into
a single vertex.  The result is a graph on a 2--disk $D$, decomposed
into quadrilaterals, triangles, and bigons. See Figure \ref{fig:s-decomp}\,(c).

Let $c=s(r-1)$ denote the number of crossings of $\beta^{s}$.  Let
$v=2+c$ denote the number of vertices on $D$.  Let $e=2c+r$ be the
number of edges, and $f=B+T+Q$ be the number of faces.  Now,
$\chi(D)=v-e+f=1$ implies $B+T+Q = c+r-1$.  Moreover, $2e=4c+2r =
2B+3T+4Q +2(s+1)$.  We now subtract these equations:
\begin{align*}
2B+3T+4Q &= 4c+2r-2s-2 \\
3B+3T+3Q &= 3c+3r-3 \\
\hline 
\strut -B +Q &= c-r-2s+1 . 
\end{align*}
Using the formula $c=s(r-1)$, the last equation simplifies to
\begin{equation}\label{eq:quad-bigon}
  Q = (B-2) + (r-3)(s-1).
\end{equation}
We claim that $Q_p = B-2$, which will be proved using the normal form for $\beta=
\sigma_{i_1}\sigma_{i_2}\dots\sigma_{i_{n-1}}$, particularly Lemma
\ref{lemma:normal-form}.

Note we obtain a bigon at the start of the braid $\beta^s$, between the $j$-th and $(j+1)$-st
strands, $1<j<n-1$, if and only if in the word of $\beta$, $\sigma_j$
appears before both $\sigma_{j-1}$ and $\sigma_{j+1}$.  For the $n$-th
and $(n+1)$-st strands, there will be a bigon if and only if
$\sigma_{n-1}$ appears before $\sigma_{n-2}$.  Finally, there will be
a bigon between the first two strands if and only if $\sigma_1$
appears first in the word $\beta$.  By Lemma \ref{lemma:normal-form},
we pick up a bigon on top for the first chain in the indices of
$\beta$, in equation \eqref{eq:normal-braid}, and one for each additional
chain of length at least two. 

Next, we obtain a quadrilateral at the start of $\beta^s$, between the
$j$-th and $(j+1)$-st strands, $1<j<n-1$, if and only if $\sigma_j$
appears after both $\sigma_{j-1}$ and $\sigma_{j+1}$ in the word of
$\beta$.  There can be no quadrilaterals in the first or last strand
positions.  By Lemma \ref{lemma:normal-form}, we pick up a
quadrilateral for each chain in the indices of $\beta$ which has
length at least two, except the first. In particular, the peripheral
quadrilaterals at the start of the braid are in bijection with all but
one of the bigons at the start of the braid.  A similar analysis
applies at the end of the braid.

Therefore, we conclude that the peripheral quadrilaterals are in
bijection with all but two of the (peripheral) bigons, so that $Q_p =
B-2$.  It follows from equation (\ref{eq:quad-bigon}) that $Q_i = (r-3)(s-1)$.
Moreover, there are $r-1$ faces at the start of the braid,
and $r-1$ faces at the end of the braid, so $Q_p+T_p+B = 2(r-1)$.  Thus,
$$Q_p + B \leq 2r-2 \quad \Rightarrow \quad Q_p + (Q_p +2) \leq 2r-2
\quad \Rightarrow \quad Q_p \leq r-2.$$ 
Finally, at least 2 peripheral faces are bigons, so $Q_p + T_p \leq
2r-4$.  When $\beta=\delta_r$ or $\bar\delta_r$, there is only one
chain in equation \eqref{eq:normal-braid}, hence $B=2,\, Q_p = 0$ and
$T_p = 2(r-2)$.
\end{proof}

Recall that the root $\beta$ of $\Delta^2$ is a positive braid. 
% (a product of $\sigma_1, \ldots, \sigma_{n-1}$, in some order specified by a permutation). 
Thus every inner quadrilateral, as in Lemma \ref{lemma:braid-quads},
is formed by two parallel over--strands crossing two parallel
under--strands.  At every inner quadrilateral, we insert a
tetrahedron, called a {\em medial tetrahedron}.  See
Figure \ref{fig:inner_tet}, left.  Two faces of every medial
tetrahedron can be seen  from $T^2 \times \{1\}$
(``top''), and the remaining two faces can be seen from $T^2 \times \{-1\}$ (``bottom'').

We also insert a medial tetrahedron at every peripheral quadrilateral
of the braid $\beta^s$. These look exactly the same as the tetrahedron
in Figure \ref{fig:inner_tet}, left, except with one corner
truncated. Again, two faces can be seen from the top, and two from
the bottom.

A consequence of inserting these medial tetrahedra is that when we
view the region of $\beta^s$ from above (or below), all faces are
triangles or bigons. Every bigon will collapse to an ideal edge, and
almost all of the triangles can be glued in pairs:

\begin{figure}
	\includegraphics[height=0.5in]{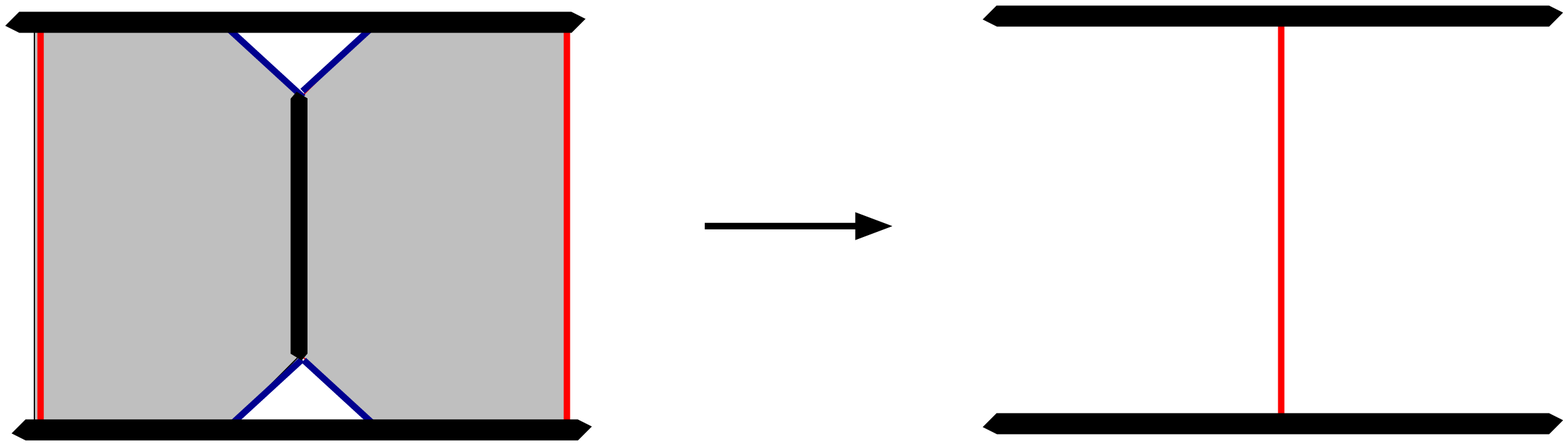}
 \hspace{0.7in}
	\includegraphics[height=0.5in]{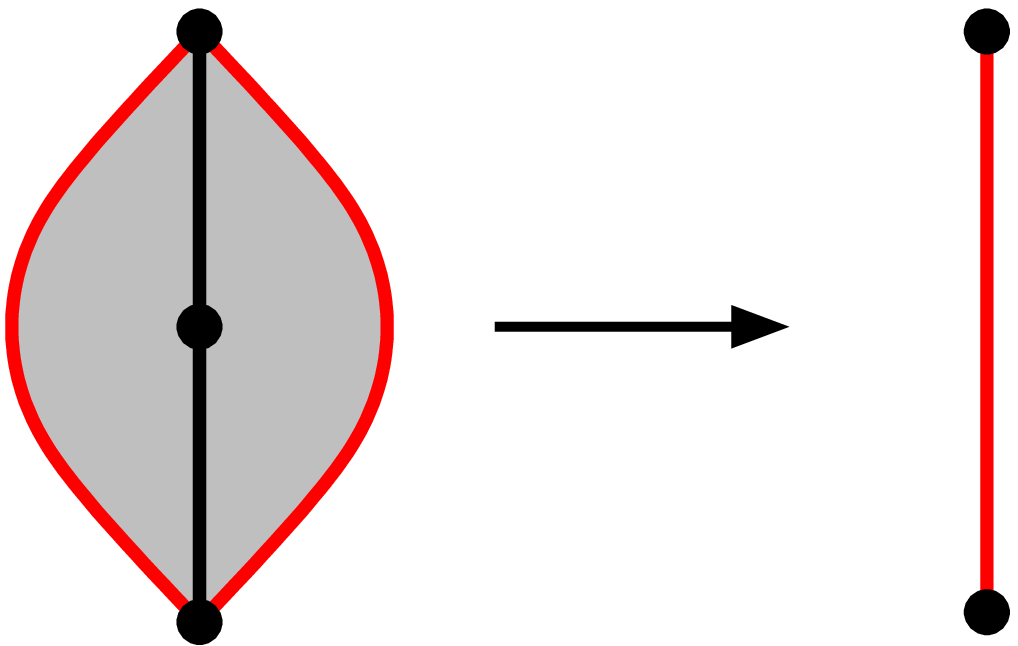}
 
	\caption{Collapsing two triangles with two common edges to an
    edge.\qquad  Left: Shown on $T^2\times 0$ with edges of $\beta^s$. Right: Shown with ideal vertices.}
\label{fig:sailboat}
\end{figure}

\begin{lemma}
\label{lemma:collapse}
Inside the region of $\beta^s$, let $t$ be a triangular face that is
visible from the top and contains no peripheral edges, possibly a
non-peripheral top face of a medial tetrahedron.
Then $t$ shares two edges with an adjacent triangle $t'$. The third
edges of $t$ and $t'$ are isotopic in $M(n,m,r,s)$. Thus $t$ can be
glued to $t'$, identifying their third edges. When this gluing
operation is viewed from $T^2 \times \{1\}$, the two triangles
collapse to a single edge. See Figure \ref{fig:sailboat}.

A similar statement holds for a triangular face visible from $T^2 \times \{- 1\}$. 
\end{lemma}

\begin{proof}
If $t$ is not peripheral, then all three sides of $t$ are inside the
region of $\beta^s$ determined by $\gamma$.  Because $\beta^s$ is a
positive braid, two ideal edges of $t$ connect to a strand of the
projection diagram between consecutive under-crossings. These two
edges are shared with an adjacent triangle $t'$, as in Figure \ref{fig:sailboat}.

Now, observe that every edge of $t$ is isotopic in $M(n,m,r,s)$ to a
corresponding edge of $t'$. Therefore, we may glue these two triangles
together, by folding them toward the top. After the gluing, all that
is visible from the top is a single ideal edge.
\end{proof}

\begin{figure}
	\includegraphics[height=1.25in]{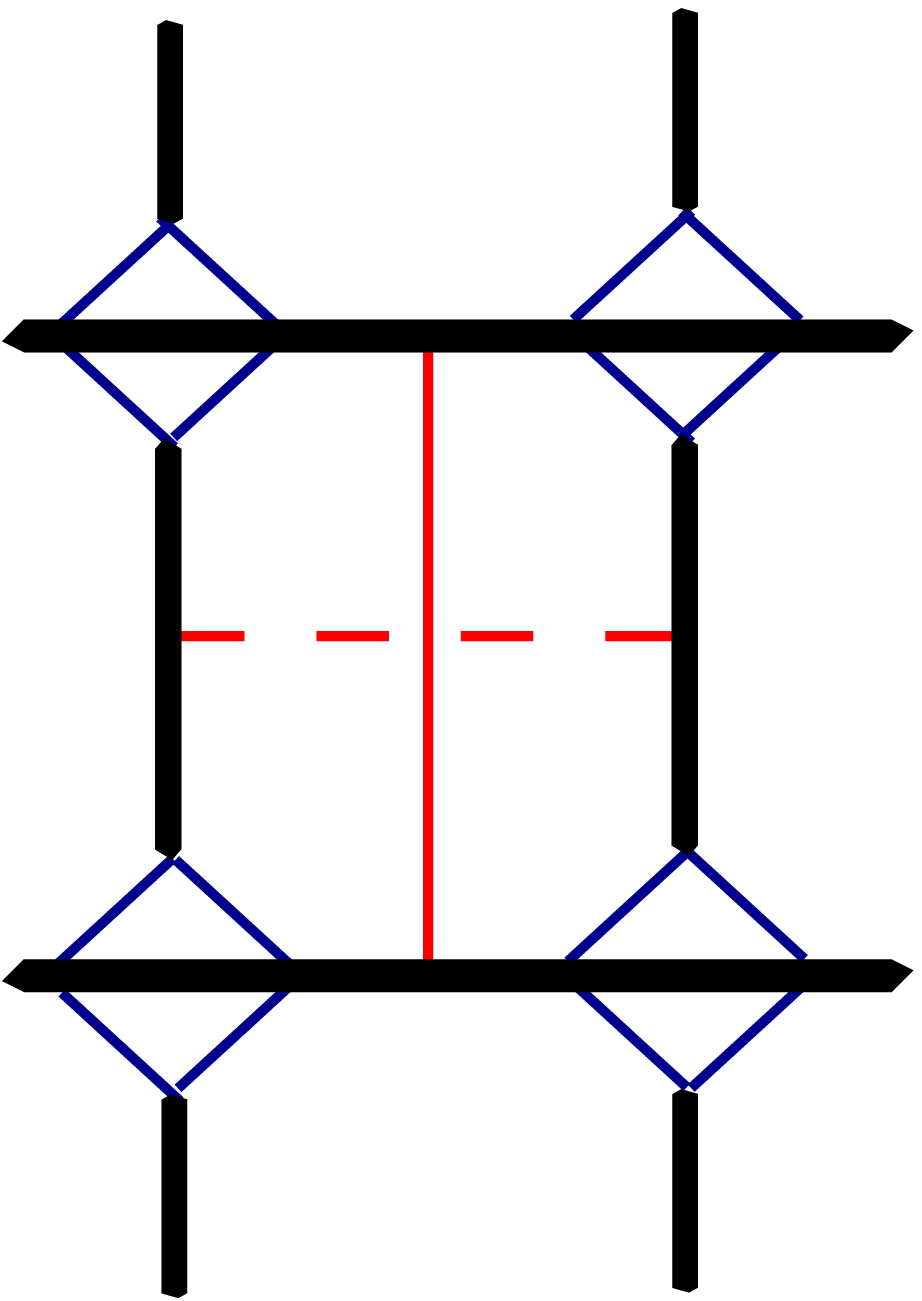}
 \hspace{0.7in}
	\includegraphics[height=1.3in]{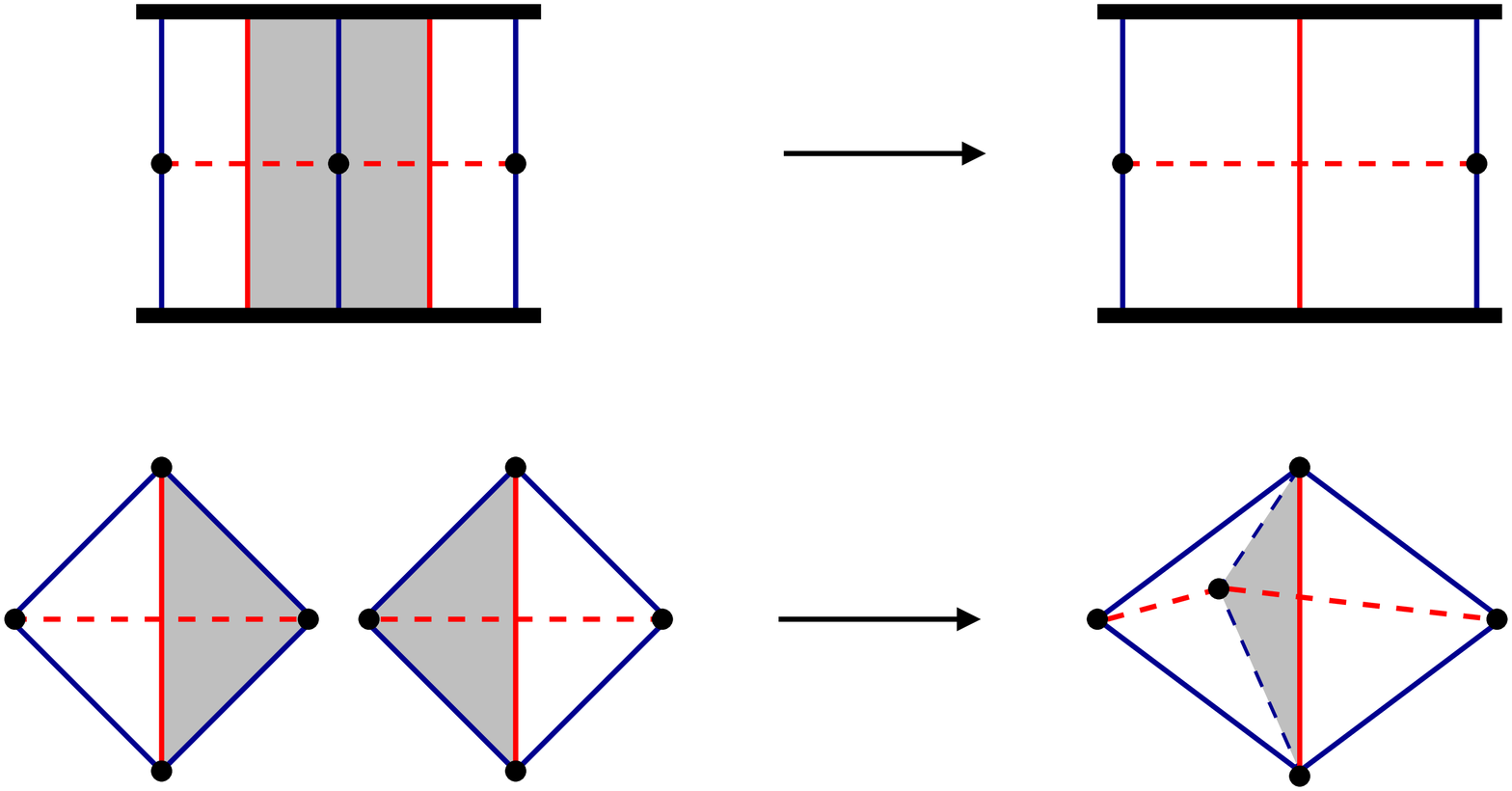}
	\caption{Left: Medial tetrahedron inserted at every inner
          quadrilateral. Right: Adjacent medial tetrahedra are glued
          together when their triangular faces with common edges are collapsed.}
\label{fig:inner_tet}
\end{figure}

See Figure \ref{fig:inner_tet}, right, for an illustration of this gluing, in the case where $t$ and $t'$ both belong to medial tetrahedra.

We now return to the proof of Lemma \ref{lemma:tetrbound2}.  As in the proof
of Lemma \ref{lemma:tetrbound1}, outside both the flattened
half--disks and the curve $\gamma$ encircling $\beta^s$, regions on
$T^2\times\{0\}$ are either bigons, in which case they collapse, or
meet one or both of the intersections of the link $L$ with $T^2 \times
\{0\}$.  In this case, the regions also meet the $s$ edges on either
side of the $s$ overpasses.  There are either two $(s+4)$--gons, or a
single $(2s+6)$--gon.  One $(s+4)$--gon is shown as a shaded region in
Figure \ref{fig:s-decomp}\,(a).

\begin{figure}
  \begin{tabular}{c}
    \includegraphics[height=1.4in]{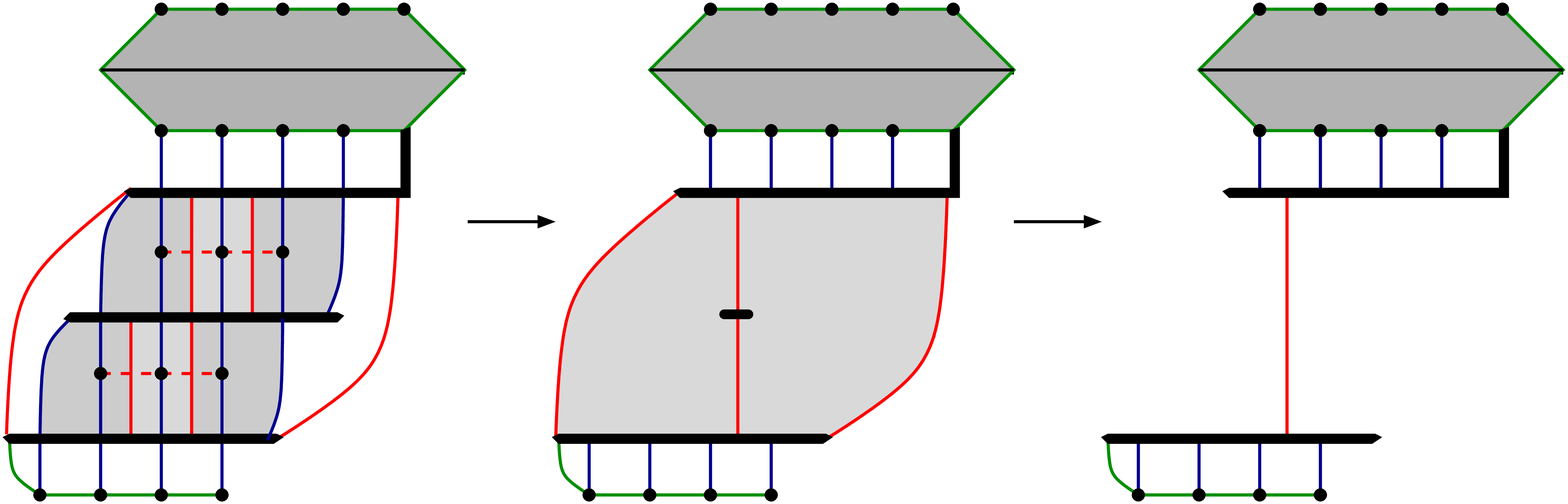}\\
\\ 
 \includegraphics[height=1.25in]{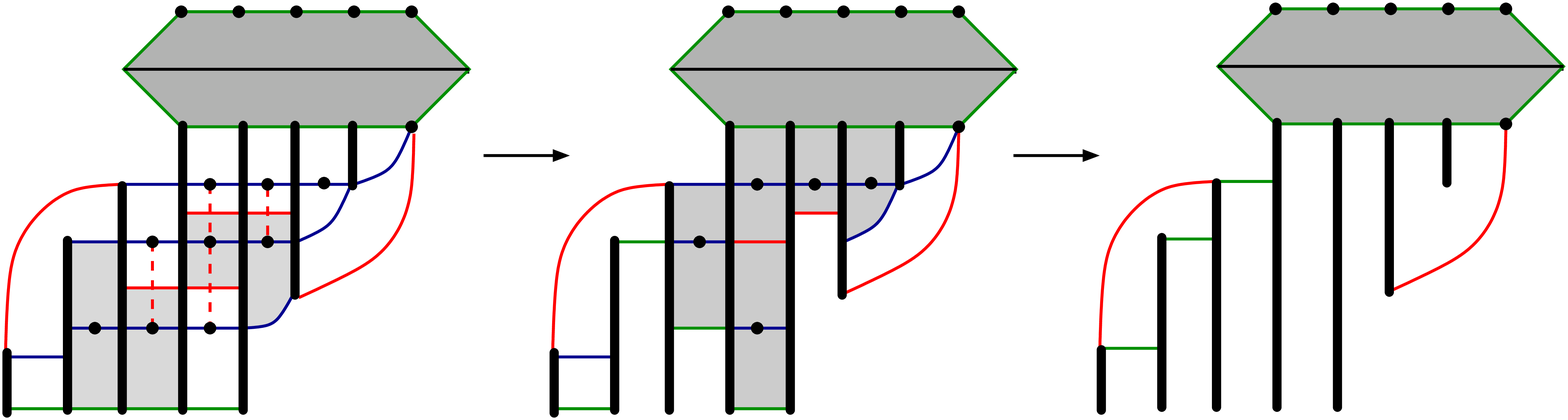} \\
 
  \end{tabular}
  \caption{For a twisted torus braid with $s\neq 0$ and
    $\beta=\bar\delta_r$, attach medial tetrahedra as in Figure
    %% \ref{fig:flyover},
    \ref{fig:sailboat} right, and repeatedly collapse using Lemma
    \ref{lemma:collapse}.  In the top row, $T^2 \times \{0\}$ is seen
    from the top, and in the bottom row from the bottom. Thick black
    lines indicate ideal vertices. }
\label{fig:ttk_collapse}
\end{figure}

{\bf Case: $\beta=\delta_r$ or $\bar\delta_r$.}\quad We now add
$2(s-2)$ edges on $T^2\times\{0\}$, as shown in Figure
\ref{fig:ttk_collapse}, subdividing the two $(s+4)$--gons or single
$(2s+6)$--gon into two hexagons or a single decagon, respectively, as
well as $2(s-2)$ triangles adjacent to the braid.  Viewed from the
top, all inner faces can be glued in pairs to collapse to edges, and
then triangles adjacent to the braid can also be glued in pairs, to
collapse to a single edge.  This is shown in the top row of Figure
\ref{fig:ttk_collapse}.  Thus, from the top, all that is left of the
two $(s+4)$--gons or single $(2s+6)$--gon are two hexagons or a single
decagon, respectively, as in Figure \ref{fig:overpasses2}.  Viewed
from the bottom, after the inner quadrilaterals are collapsed, the
$2(s-2)$ new triangles remain, as well as the two hexagons or single
decagon.  When we cone to top and bottom, pyramids over the two
hexagons or single decagon will be glued along those faces, and so we
may perform stellar subdivision.  Lemmas \ref{lemma:subdivide} and
\ref{lemma:braid-quads} apply, and we count:
\begin{itemize}
\item $2(r+1)$ tetrahedra from each pair of half--disks, which are $(r+1)$--gons.
\item $12$ tetrahedra from the two hexagons, or $10$ tetrahedra from the single decagon.
\item $(r-3)(s-1)$ medial tetrahedra.
\item $2(r-2)$ tetrahedra from coning peripheral triangles to the top.
\item $2(s-2)$ tetrahedra from coning other triangles to the bottom.
\end{itemize}
In this case, all the $(r-3)(s-1)$ medial tetrahedra are incident to
the single collapsed edge seen from the top (the edge shown in the top
right of Figure \ref{fig:ttk_collapse} and middle of Figure
\ref{fig:overpasses2}(a)\,).

\begin{figure}
  \begin{tabular}{cc}
	\includegraphics[height=1.25in]{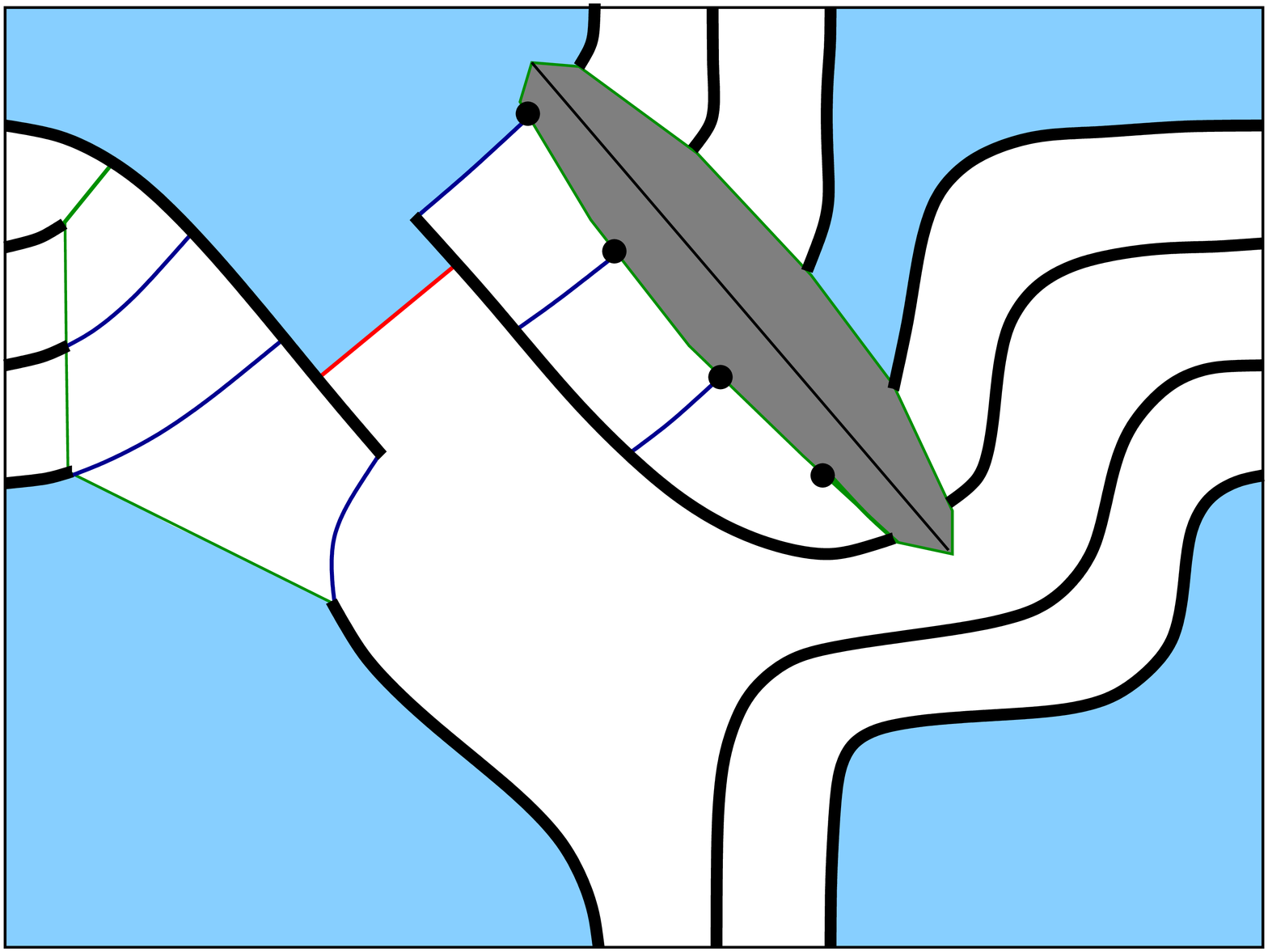} \ \ &
\includegraphics[height=1.25in]{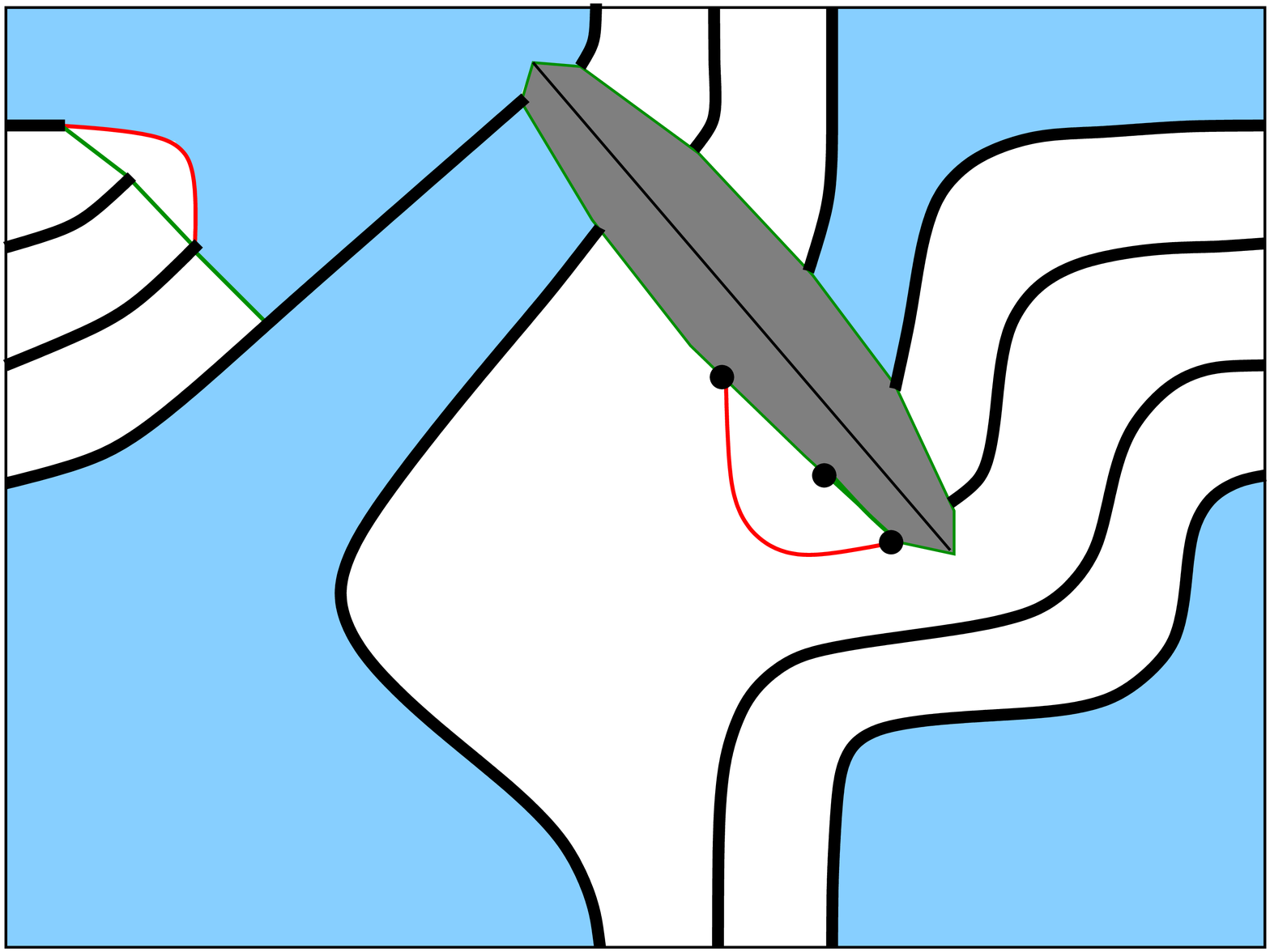} \\
    (a) & (b) 
  \end{tabular}
	\caption{After collapsing as in Figure \ref{fig:ttk_collapse},
          regions on $T^2 \times \{0\}$ seen from the top in (a) and from
          the bottom in (b).}
\label{fig:overpasses2}
\end{figure}

{\bf Case: $\beta$ is any positive root.}\quad 
For simplicity (and unlike the previous case), we will not attempt to
collapse any additional triangles, beyond what was done in Lemma
\ref{lemma:collapse}.  Instead, the two $(s+4)$--gons or the single
$(2s+6)$--gon will simply be coned to $T^2 \times \{ \pm 1\}$, and
subdivided into tetrahedra by stellar subdivision as in Lemma
\ref{lemma:subdivide}.  Also, by Lemma \ref{lemma:collapse}, for each
peripheral quadrilateral, only one triangular face of a medial
tetrahedron will be visible from the top, and one from the bottom,
after collapsing.  Thus, peripheral triangles and quadrilaterals each
contribute two tetrahedra by coning one triangle to the top, and one
to the bottom.  We count:
\begin{itemize}
\item $2(r+1)$ tetrahedra from each pair of half--disks, which are $(r+1)$--gons.
\item $2(s+4)$ tetrahedra from stellar subdivision, assuming worst case of two $(s+4)$--gons.
\item $Q_i = (r-3)(s-1)$ medial tetrahedra from inner quadrilaterals.
\item $Q_p \leq (r-2)$ medial tetrahedra from peripheral quadrilaterals.
\item $2(Q_p + T_p) \leq 4(r-2)$ tetrahedra from coning peripheral faces to the top and bottom.
\end{itemize}
Adding these counts together completes the proof of Lemma \ref{lemma:tetrbound2}.  
\end{proof}

We can now prove the upper volume bounds of Theorem \ref{thm:s-bound}.

\begin{proof}[Proof of Theorem \ref{thm:s-bound}]
The link complement $S^3 \setminus T(p,q,r,s)$ is obtained by Dehn
filling the manifold $M(p,q,r,s)$, so $\vol(T(p,q,r,s))$ is bounded
above by the volume of $M(p,q,r,s)$ \cite{Thurston}.  By Lemma
\ref{lemma:Mpqr-homeo}, the volume of $M(p,q,r,s)$ is the same as that
of $M(m,n,r,s')$ or $M(n,m,r,s')$, where $s' = s \mod r$, $0\leq s' < r$ and $n$, $m$
are as in the statement of that lemma.  

The volume of any ideal tetrahedron is at most $v_3$.  If $r=2$ and
$s=0$, Lemma \ref{lemma:tetrbound1} implies that $M(1,1,2,0)$ can be
decomposed into $10$ ideal tetrahedra, so $\vol(M(1,1,2,0)) \leq
10v_3$.  Notice that $M(1,1,2,1)$ differs from $M(1,1,2,0)$ by a
single half--twist in a 2--punctured disk.  Hence these two manifolds
have the same volume \cite{adams:3-punct}.  Thus $\vol(T(p,q,2,s)) <
\vol(M(1,1,2,0)) \leq 10v_3$.

If $r>2$ and $s=0 \mod r$, Lemma \ref{lemma:tetrbound1} implies the
manifold $M(n,m,r,0)$ (or $M(m,n,r,0)$) can be decomposed into
$2(r+4)$ tetrahedra, or $2(r+5)$ tetrahedra, depending on whether
$n+m=r$ or not, respectively.  

Finally if $r>2$ and $s\neq 0 \mod r$, then Lemma
\ref{lemma:tetrbound2} applies, and the manifold can be decomposed
into at most $rs'+3r-s'+9 = (r-1)s' + 3r + 9$ tetrahedra if
$\beta=\delta_r$, and $rs'+6r-s'+3= (r-1)s' + 6r+3$ otherwise.  Since
$0<s'<r$, we obtain volume bounds $v_3(r^2 + r + 10)$ and $v_3(r^2 +
4r + 4)$, respectively.
\end{proof}

When $r=2$, the bound of Theorem \ref{thm:s-bound} is sharp.  In the
special case when $s=0 \mod r$ and $m+n=r$, the above proof gives the
better bound $v_3(2r+8)$.

% We may now prove Theorem \ref{thm:s-bound}, stated in the introduction.
\subsection{Links with volume approaching $10 v_3$.}
The following construction gives an explicit family of twisted torus knots with $r=2$ whose volumes approach $10v_3$. This will prove Theorem \ref{thm:sharp2}, and demonstrate the sharpness of Theorem \ref{thm:s-bound} when $r=2$.

\begin{proof}[Proof of Theorem \ref{thm:sharp2}]
The manifold $S^3\setminus T(p,q,2,2N)$ is obtained from $M(p,q,2,2N)$
by Dehn filling.  Lemma \ref{lemma:Mpqr-homeo} implies that
$M(p,q,2,2N)$ is homeomorphic to $M(1,1,2,0)$.  We begin the proof by showing 
that $M(1,1,2,0)$ is a hyperbolic manifold obtained by gluing $10$
regular ideal tetrahedra, hence has volume exactly $10v_3$.

Notice that the manifold $M(1,1,2,0)$ has a $\Z^2$ cover by the
infinite chain--link--fence complement, Figure \ref{fig:chain-link}.
The infinite chain--link--fence complement is studied in detail by
Agol and Thurston in \cite[Appendix]{lackenby:volume-alt}.  In
particular, they find a subdivision of this link complement into
regular ideal tetrahedra.

\begin{figure}
  \includegraphics[height=1.5in]{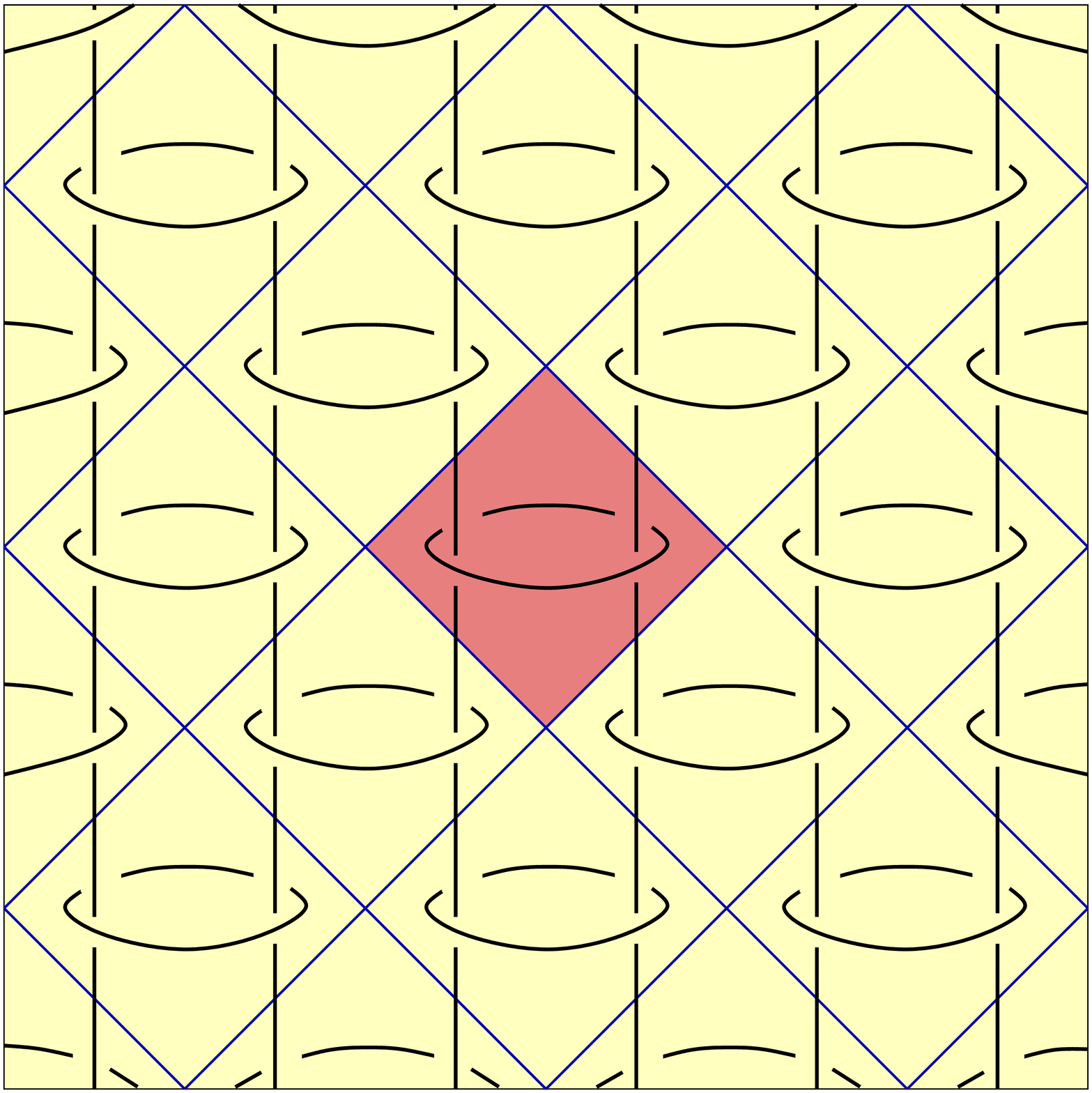}
  \caption{$M(1,1,2,0)$, shown in center, is covered by the infinite
    chain--link--fence complement.}
  \label{fig:chain-link}
\end{figure}

This subdivision is obtained first by slicing the chain--link--fence
complement in half along the projection plane, then slicing up
half--disks bounded by crossing circles as in our proof of Lemma
\ref{lemma:tetrbound1} above, and opening them up and flattening them.
These are coned to points above or below the projection plane,
yielding four tetrahedra per  crossing disk.  The remainder of the
chain--link--fence complement consists of pyramids over regular
hexagons.  This is illustrated in Figure 17 of
\cite{lackenby:volume-alt}, where shaded triangles come from disks
bounded by circles.  To finish the decomposition into tetrahedra, Agol
and Thurston replace two pyramids glued over a hexagon face with a
stellar subdivision into six tetrahedra, as in Lemma
\ref{lemma:subdivide}.

All these tetrahedra in the decomposition of the chain--link--fence
complement are now seen by a circle packing argument to be regular
ideal tetrahedra.  Notice that the subdivision is invariant under the
action of $\Z^2$ corresponding to our covering transformation.  Thus
the regular ideal tetrahedra descend to give a decomposition of
$M(1,1,2,0)$ into ideal tetrahedra.  Tracing through the proof of
Lemma \ref{lemma:tetrbound1}, we see that these ideal tetrahedra
agree with those of our decomposition.  Since there are $10$ such
tetrahedra, the volume of $M(1,1,2,0)$ is $10v_3$.

%% This paragraph is on the chopping block if we need to save space.
Another way to see that all tetrahedra are regular follows from the
fact that all the edges of this triangulation are 6--valent. In this
case, the ideal tetrahedra satisfying the gluing equations have all
dihedral angles $\pi/3$, so they are regular ideal tetrahedra.  Since
all links of tetrahedra are equilateral triangles, they are all
similar, and all edges of any triangle are scaled by the same factor
under dilations.  Hence, the holonomy for every loop in the cusp has
to expand and contract by the same factor (i.e., it is scaled by
unity), so it is a Euclidean isometry. This implies that the regular
ideal tetrahedra are also a solution to the completeness equations.

Finally, recall that every knot $T(p,q,2,2N)$ is obtained by Dehn filling three
of the four boundary tori of $M(1,1,2,0)$. Two of these tori correspond 
to the components of the Hopf link, or equivalently the
top and bottom boundary components of $T^2\times I$ in $M(1,1,2,0)$, 
while the third is the crossing circle 
encircling the two strands of the knot
of slope $(1,1)$ on $T^2 \times \{0\}$.

Now, choose a pair of (large) integers $p$ and $q$, such that $\gcd(p, q) = 1$. 
In other words, there exist integers $(u,v)$, such that $uq-pv=1$.  We may embed
$T^2\times I$ into the complement of the Hopf link via the matrix
$$A = \begin{bmatrix} u & p-u \\ v & q-v \end{bmatrix}.
\qquad \mbox{Note that} \quad \begin{bmatrix} u & p-u \\ v & q-v \end{bmatrix}
 \begin{bmatrix} 1 \\ 1 \end{bmatrix} =  \begin{bmatrix} p \\ q \end{bmatrix}, $$
hence this embedding sends the curve of slope $(1,1)$ on $T^2$ to a $(p,q)$ torus knot in $S^3$.

Consider the Dehn filling slopes in this construction.
We will perform $(1,N)$ Dehn filling on the crossing circle in $T^2\times I$, thereby inserting $2N$ crossings between a pair of strands in the $(p,q)$ torus knot. As for the top and bottom tori of $T^2\times I$, we will fill them along the slopes that become meridians of the Hopf link after embedding via the matrix $A$. In other words, in the original framing on $T^2 \times I$, the top and bottom Dehn filling slopes are 
$$A^{-1}  \begin{bmatrix} 1 \\ 0 \end{bmatrix} = \begin{bmatrix} q-v \\ -v \end{bmatrix} 
\quad \mbox{and} \quad
A^{-1}  \begin{bmatrix} 0 \\ 1 \end{bmatrix} = \begin{bmatrix} u-p \\ u \end{bmatrix}. $$

Thus, so long as $N$ is large and $p=p_N$ and $q=q_N$ are also large, the Dehn filling slopes will be long. As $(p_N, q_N) \to (\infty, \infty)$, the length of the slopes approaches $\infty$. Thus the volume of the Dehn filled manifold, $S^3 \setminus T(p_N, q_N, 2, 2N)$, will approach $\vol(M(1,1,2,0)) =
10v_3$.
\end{proof}

\subsection{$T$--links}

Let $K$ be $T((p,q),(r_{1},s_{1}),\dots ,(r_k,s_k))$.  Let $C_1\cup
C_2$ be the Hopf link as above.  For $i=1,\ldots, k$, augment the link
$K$ with unknots $L_i$ that
encircle the $r_i$ strands of the $i$-th braid $\beta_i$.  Let $s_i'=s_i\mod
r_i$, so that $0\leq s_i' < r_i$.  Let
$$M(p,q,r_1,s_1',\ldots,r_k,s_k') = S^3\setminus \left(C_1 \cup C_2
\cup (\cup_{i=1}^k L_i)\cup K\right).$$
Note $S^3\setminus K$ is
homeomorphic to a Dehn filling on $M(p,q,r_1,s_1',\ldots,r_k,s_k')$.

\begin{lemma}
  Suppose $s_i'=0$, $i=1, \dots, k$.  Then $M(p,q,r_1,0,\dots,r_k,0)$
  can be decomposed into at most $r_1^2+9r_1-8$ ideal tetrahedra.
\label{lemma:tetrbound3}
\end{lemma}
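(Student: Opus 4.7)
The approach is to follow the template of Lemma \ref{lemma:tetrbound1} (the $k=1$ case), extending the cut--slice--flatten--cone construction to simultaneously handle $k$ nested augmentation circles $L_1 \supset L_2 \supset \cdots \supset L_k$.

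First, since every $s_i' = 0$ the braid $\beta_i^{s_i'}$ is trivial, so the only twisting visible on $T^2 \times \{0\}$ comes from the underlying $(p,q)$--torus link. Mimicking Lemma \ref{lemma:Mpqr-homeo}, apply Dehn twists along the $T^2$ factor of $T^2 \times I$ to run the truncated Euclidean algorithm on $p/q$, reducing to some slope $n/m$ with $0 < n,m < r_1$ and $n+m \geq r_1$. These Dehn twists preserve the $L_i$, since each $L_i$ meets $T^2 \times \{0\}$ transversely in two points near a common axis and can be isotoped along with the twists.

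Next, cut $M$ along $T^2 \times \{0\}$. Then for $i=1,\dots,k$, processed from outermost to innermost, slice the two half-disks of $L_i$ up the middle and flatten them onto $T^2 \times \{0\}$, placing the successive flattenings side by side so they remain disjoint. Each $L_i$ contributes a pair of $(r_i+1)$-gons on each side of $T^2 \times \{0\}$. The remaining complementary faces are (i) bigons, which collapse to ideal edges; (ii) ``gap'' polygons lying between the flattened half-disks of $L_i$ and $L_{i+1}$; and (iii) outer polygons adjacent to the ends of the chain of flattenings. Cone each non-bigon face to $T^2 \times \{\pm 1\}$, producing pairs of pyramids glued along their polygonal bases, and apply Lemma \ref{lemma:subdivide} to subdivide into tetrahedra.

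Finally, tally the contributions. The flattened $(r_i+1)$-gons contribute $\sum_{i=1}^k 2(r_i+1)$ tetrahedra; the constraint $r_1 > r_2 > \cdots > r_k \geq 2$ forces $k \leq r_1 - 1$, and a direct estimate gives $\sum_{i=1}^k 2(r_i+1) \leq (r_1-1)(r_1+4) = r_1^2 + 3r_1 - 4$. The goal is then to show that the gap and outer polygons contribute at most $6r_1 - 4$ additional tetrahedra, yielding the claimed bound of $r_1^2 + 9r_1 - 8$.

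The main obstacle is the combinatorial analysis of the gap and outer polygons in Step 3: each gap region between consecutive flattenings of $L_i$ and $L_{i+1}$ is bounded by portions of both flattened $(r_i+1)$- and $(r_{i+1}+1)$-gon boundaries together with arcs of the torus-link strands, and these regions must be paired properly across the re-gluing of $T^2 \times \{0\}$. Verifying that the combined size of all such gap and outer polygons grows only linearly in $r_1$ (and is independent of $k$ in the worst case) is the delicate combinatorial step of the proof.
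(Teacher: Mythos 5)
Your overall framework is the same as the paper's: cut along $T^2\times\{0\}$, slice and flatten each pair of half--disks bounded by $L_i$ into two $(r_i+1)$--gons, cone everything to $T^2\times\{\pm1\}$, subdivide with Lemma \ref{lemma:subdivide}, and finish with $k\le r_1-1$ and $\sum_i 2(r_i+1)\le r_1^2+3r_1-4$; your target of $6r_1-4$ for the remaining regions is also exactly what is needed. But the step you explicitly defer as ``the delicate combinatorial step'' is the actual content of the lemma, and you do not supply it, so as written the argument has a genuine gap. The paper closes it with a short observation that you are missing: after collapsing bigons, a complementary region can contribute tetrahedra only if it contains an end of some flattened half--disk $D_i$ (every region meeting no disk end is a bigon and collapses to an ideal edge), and there are only $2k$ such ends in total. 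Since by Lemma \ref{lemma:subdivide} the number of tetrahedra coming from a region equals its number of sides, the worst case is the configuration in which one region absorbs one end of each $D_i$ (a $(2k+2)$--gon) and each of the remaining $k$ ends sits in its own quadrilateral, giving at most $(2k+2)+4k=6k+2\le 6r_1-4$ tetrahedra. Note that this count is independent of $p$ and $q$ precisely because everything away from the disk ends is a bigon; for the same reason your preliminary reduction of $p/q$ by Dehn twists along $T^2$ (a multi-circle analogue of Lemma \ref{lemma:Mpqr-homeo}) is harmless but does no work and is not needed.

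Your worry about pairing regions ``across the re-gluing of $T^2\times\{0\}$'' also dissolves once the construction is set up as in the paper: the faces of the decomposition live on $T^2\times\{0\}$ itself and are coned both upward and downward, so each non-bigon polygon automatically yields two pyramids glued along a common base, which is exactly the situation of Lemma \ref{lemma:subdivide}. What remains for a complete proof is therefore the bigon/disk-end count above, not a finer analysis of how gap polygons between consecutive flattenings match up.
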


\begin{proof}
As above, we will decompose $M(p,q,r_1,0,\ldots ,r_k,0)$ into
tetrahedra by first cutting the manifold into two pieces along the
torus $T^2 \times \{0\}$.  The common boundary of these two pieces is
shown in Figure \ref{fig:Pk-decomp}.  After cutting along $T^2\times
\{0\}$, the punctured disks $D_i$ bounded by $L_i$ are cut into two,
sliced and flattened onto $T^2 \times \{ 0 \}$.  Each half--disk $D_i$
can be divided into two $(r_i+1)$-gons.  We obtain $2(r_i+1)$
tetrahedra from each of these, as above, by first coning to the
boundary $T^2\times \{1\}$ or $T^2\times \{-1\}$, obtaining two
pyramids for each half--disk, and then applying Lemma
\ref{lemma:subdivide}.  (If some $r_i=2$, we can improve this bound,
but we won't use this fact.)  This gives a total of $\sum_{i=1}^k
2(r_i+1)$ tetrahedra from half--disks.

\begin{figure}
	\includegraphics[height=1.3in]{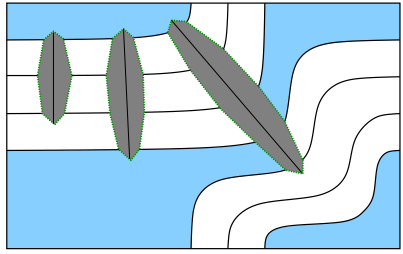}
	\caption{T-link with all $s_i'=0$, and all half--disks
          flattened.  $U$ is the shaded region.  A polygon
          remains after collapsing bigons.}
\label{fig:Pk-decomp}
\end{figure}

Now we consider regions other than half--disks.  Each such region is
either a bigon, in which case it contributes no tetrahedra, or meets
at least two edges on the end of some $D_i$.  In particular, one
region $U$ (the ``top left corner'' of Figure \ref{fig:Pk-decomp},
shown shaded) will meet $2k$ such edges, two for the end of each $D_i$
meeting in that region, as well as additional edges as the region
connects to other regions by identifications on the torus.

The total number of tetrahedra for all these regions will be as large
as possible when each region meets as few edges of the $D_i$ as
possible.  This will happen when $U$ meets only the $2k$ edges
corresponding to side edges of each $D_i$, and then just two more
edges, either both from $D_1$ or one from $D_1$ and one from $D_j$, to close off.  Additionally, the other
end of each $D_i$ will be in a region meeting no other end of another
disk 
%$D_j$, 
and this region will meet exactly two edges from the end of
$D_i$ and exactly two other edges of some other disks
% $D_i$ and $D_j$ 
to close off.  Hence each such region has four edges total.  Thus when we have
the maximum number of tetrahedra possible, we will have one region
with $2k+2$ edges, and $k$ quadrilaterals.

As before, cone these to pyramids lying above and below $T^2\times
\{0\}$.  Lemma \ref{lemma:subdivide} implies this can be divided
into at most $(2k+2) + 4k = 6k+2$ tetrahedra.

Since the $r_i$'s are strictly decreasing and $r_k \geq 2$, it follows
that $k \leq r_1 - 1$ and $\sum_{i=1}^k r_i \leq \sum_{i=2}^{r_1} i =
(r_1^2+r_1-2)/2$.  Hence the number of tetrahedra is bounded by:
\begin{eqnarray*}
\sum_{i=1}^k 2(r_i+1) + (6k+2) = (6k+2) + 2k + 2 \sum_{i=1}^k r_i \\
\leq \ 8(r_1-1) + 2 + (r_1^2+r_1-2) \ = \ r_1^2 + 9r_1 -8
\end{eqnarray*}
This completes the proof of Lemma \ref{lemma:tetrbound3}.
\end{proof}

\begin{lemma}
Suppose $s_i'\neq 0$, for some $i$, $1\leq i \leq k$.  The manifold $M(p,q,(r_1,s_1'), \dots, (r_k, s_k'))$ can be
  decomposed into $t$ ideal tetrahedra, where $t$ is at most
  $ \tfrac{1}{3}r_1^3 + \tfrac{5}{2} r_1^2 + 5 r_1 - 5 $.
  \label{lemma:tetrbound4}
\end{lemma}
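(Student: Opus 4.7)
The approach is to adapt the proof of Lemma \ref{lemma:tetrbound2} to each of the $k$ braid regions in turn, combined with the global bookkeeping of Lemma \ref{lemma:tetrbound3}.

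First, I would cut $M(p,q,(r_1,s_1'),\dots,(r_k,s_k'))$ along the projection torus $T^2 \times \{0\}$ and flatten the half-disk bounded by each $L_i$ into two $(r_i+1)$-gons. For each index $i$ with $s_i' \neq 0$, encircle the braid $\beta_i^{s_i'}$ with a simple closed curve $\gamma_i$, add the crossing edges of $\beta_i^{s_i'}$ inside $\gamma_i$, insert a medial tetrahedron at each inner and peripheral quadrilateral inside $\gamma_i$, and apply Lemma \ref{lemma:collapse} to collapse adjacent triangular faces visible from $T^2\times\{\pm 1\}$. Finally cone all remaining polygonal faces on $T^2 \times \{0\}$ to $T^2 \times \{\pm 1\}$ and subdivide the resulting pyramids via Lemma \ref{lemma:subdivide}.

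Next I would count tetrahedra level by level. For each $i$ with $s_i' \neq 0$, the contribution from the $i$-th braid region (half-disk of $L_i$, medial tetrahedra, coned peripheral faces, and stellar subdivision of the transitional $(s_i'+4)$-gons adjacent to the braid) is bounded just as in Lemma \ref{lemma:tetrbound2} by $r_i s_i' + 6r_i - s_i' + 3$. Using $s_i' \leq r_i - 1$ this is at most $r_i^2 + 4r_i + 4$. For $i$ with $s_i' = 0$, only the half-disk of $L_i$ contributes, giving $2(r_i+1)$. The global ``corner'' region of $T^2 \times \{0\}$ outside all $\gamma_i$ and half-disks, analogous to the region $U$ in Lemma \ref{lemma:tetrbound3}, contributes $O(k)$ additional tetrahedra by Lemma \ref{lemma:subdivide}.

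Since $r_1 > r_2 > \dots > r_k \geq 2$, we have $k \leq r_1 - 1$ and $\{r_1,\dots,r_k\} \subseteq \{2,\dots,r_1\}$. Summing the per-level bounds and using $\sum_{j=2}^{r_1} j^2 = \tfrac{r_1(r_1+1)(2r_1+1)}{6} - 1$ yields a polynomial of the form $\tfrac{1}{3} r_1^3 + \tfrac{5}{2} r_1^2 + O(r_1)$, matching the stated cubic leading behavior.

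The hard part will be the arithmetic bookkeeping to pin down the exact linear coefficient $5$ in the target bound $\tfrac{1}{3} r_1^3 + \tfrac{5}{2} r_1^2 + 5r_1 - 5$. A naive sum of the per-level bound $r_i^2 + 4r_i + 4$ together with a separate global-corner count overshoots this coefficient, since it produces a linear term closer to $\tfrac{37 r_1}{6}$. Thus the proof must carefully exploit shared structure between adjacent braid levels: the two transitional $(s_i'+4)$-gons flanking the $i$-th braid merge with the analogous regions of the $(i\pm 1)$-st braid and with the global corner, so their $2(s_i'+4)$ contribution should not be duplicated at every level. Accounting for this merging (and using the strict inequality $s_i' < r_i$ per level) will be the decisive step in matching the stated constants.
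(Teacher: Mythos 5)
Your setup and overall route are exactly the paper's: cut along $T^2\times\{0\}$, flatten each half--disk of $L_i$ into two $(r_i+1)$--gons, insert medial tetrahedra at the quadrilaterals of each $\beta_i^{s_i'}$ via Lemma \ref{lemma:braid-quads}, collapse via Lemma \ref{lemma:collapse}, cone and apply Lemma \ref{lemma:subdivide}. However, there is a genuine gap at precisely the point you flag as ``the decisive step'': you never actually produce the count of the regions of $T^2\times\{0\}$ lying outside all the half--disks and braid regions, and without it the stated bound is not established. Your per-level packaging, which imports the full Lemma \ref{lemma:tetrbound2} bound $r_is_i'+6r_i-s_i'+3$ for every level (so in particular a $2(s_i'+4)$ stellar-subdivision term at each level), overshoots, as you yourself compute; saying that the flanking $(s_i'+4)$--gons ``merge'' and ``should not be duplicated'' identifies the problem but is not an argument, since it is exactly this merging that has to be quantified to get the linear coefficient down to $\tfrac{25}{6}$ (which the paper then rounds up to $5$).

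The paper resolves this not by un-duplicating per-level counts but by counting the exterior regions globally, in the style of Lemma \ref{lemma:tetrbound3}: when all $s_i'=0$ those regions contribute at most $6k+2$ tetrahedra, and turning on the braids adds exactly $2\sum_i s_i'$ side edges; since these side edges are adjacent, the \emph{number} of exterior regions is unchanged, and subdividing each region into one tetrahedron per edge (times two, for coning up and down, already built into the $6k+2$ accounting) gives at most $6k+2+2\sum_i s_i'$ tetrahedra from all exterior regions at once. With that input the bookkeeping closes: per level one gets $2(r_i+1)+(r_i-3)(s_i'-1)+(r_i-2)+4(r_i-2)+2s_i'$, and using $s_i'\le r_i-1$, $k\le r_1-1$, and the fact that the $r_i$ are distinct integers in $\{2,\dots,r_1\}$, the total is at most $\sum_{i=2}^{r_1}(i^2+4i)+2r_1=\tfrac{1}{3}r_1^3+\tfrac{5}{2}r_1^2+\tfrac{25}{6}r_1-5<\tfrac{1}{3}r_1^3+\tfrac{5}{2}r_1^2+5r_1-5$. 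You also need (and the paper notes) the easy remark that levels with $s_i'=0$ contribute no medial or peripheral-cone tetrahedra, so the same bound covers the mixed case; your proposal states this but, like the main count, does not verify it against a concrete exterior-region tally.
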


\begin{proof}
See Figure \ref{fig:tlink1}, which generalizes both Figure
\ref{fig:s-decomp} and Figure \ref{fig:Pk-decomp}.
First assume that $s_i'\neq 0,\forall \  i=1,\ldots,k$.
%For $i=1,\ldots, k$, 
Each region with $\beta_i^{s_i'}$, such as the
$s_i'$ overpasses in Figure \ref{fig:tlink1}, can be subdivided as in
Lemma \ref{lemma:braid-quads}.  Thus, we can repeatedly apply the
methods of Lemma \ref{lemma:tetrbound2} in the general case that
$\{\beta_i\}$ are any positive roots.  We count:

\begin{figure}
	\includegraphics[height=1.5in]{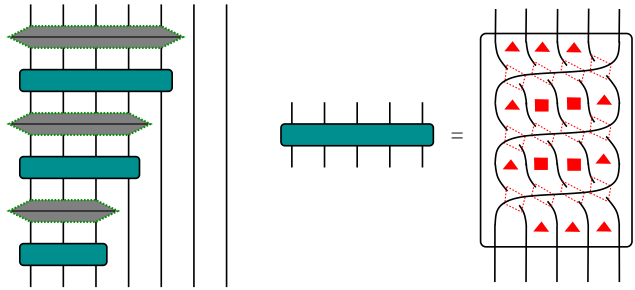}
	\caption{Schematic figure of a general T-link, with triangles
          and quadrilaterals indicated for the root $\bar\delta_5$.}
\label{fig:tlink1}
\end{figure}

\begin{itemize}
\item $\sum_{i=1}^k2(r_i+1)$ tetrahedra from each pair of half--disks $D_i$.
\item $\sum_{i=1}^k  \left[ (r_i-3)(s_i'-1) + (r_i-2) \right]$ medial tetrahedra.
\item $\sum_{i=1}^k4(r_i -2)$ tetrahedra from coning remaining at most $2(r_i -2 )$ peripheral triangles to the top and bottom.
\end{itemize}

To count the triangles in the unbounded regions in Figure
\ref{fig:tlink1}, we recall from the proof of Lemma
\ref{lemma:tetrbound3} that all regions outside the half--disks in
Figure \ref{fig:Pk-decomp} together contribute at most $6k+2$
tetrahedra.  However, in Figure \ref{fig:tlink1}, there are
$2\sum_{i=1}^k s_i'$ additional side edges from the regions with
overpasses.  Because these side edges are adjacent, the number of
unbounded regions in Figure \ref{fig:tlink1} is the same as in the
case where all $s_i'=0$.  Since we subdivide these regions into
tetrahedra, one for each edge, it follows that the number of
tetrahedra from these regions is at most the previous count plus the
number of additional side edges: $6k+2 + 2\sum_{i=1}^k s_i'$.
(Whenever $\beta_i=\delta_{r_i}$ or $\bar\delta_{r_i}$, this triangulation can be
improved by the methods of Lemma \ref{lemma:tetrbound2}.)

Therefore, using $s_i' \leq r_i-1$ and $k\leq r_1-1$, the total number
of tetrahedra is bounded by:
\begin{eqnarray*}
t & \leq & \sum_{i=1}^k \big( 2(r_i+1) + (r_i-3)(s_i'-1) + 5(r_i -2) + 2s_i'\big) + 6k+2 \\
& \leq & \sum_{i=1}^k \big(2(r_i+1) + (r_i-3)(r_i-2) + 5(r_i-2) + 2(r_i-1)\big) + 6k+2 \\
& = & \sum_{i=1}^k \big(r_i^2 + 4r_i\big) +2k+2
\ \leq\ \sum_{i=2}^{r_1} \big(i^2 + 4i \big) +2 r_1  \\
%% & \leq & \sum_{i=2}^{r_1} \left(i^2 + 7i\right) - 4r_1+6 \\
& = & \tfrac{1}{3}r_1^3 + \tfrac{5}{2}r_1^2 + \tfrac{25}{6}r_1 -5
\ < \ 
\tfrac{1}{3} r_1^3 + \tfrac{5}{2} r_1^2 + 5 r_1 -5.
\end{eqnarray*}

If $s_i'=0$ for some $i$, $1\leq i \leq k$, this region does not
contribute to the count of medial tetrahedra or tetrahedra from coning
peripheral triangles.  Hence, $t$ is bounded as above.
% \vspace{-2ex}
\end{proof}

\begin{proof}[Proof of Theorem \ref{thm:T-bound}]
Let $L$ be the following $T$--link
$$L = T((p,q),(r_1, s_1,\beta_1), \dots,
(r_k,s_k,\beta_k)).$$  If $s_i = 0 \mod r_i$ for all $i$, then
$S^3\setminus L$ is obtained by Dehn filling $M=M(p,q,r_1, 0, \dots,
r_k,0)$, and by Lemma \ref{lemma:tetrbound3}, $M$ can be decomposed
into at most $r_1^2+9r_1-8$ tetrahedra.  Hence,
$$ \vol(L) < v_3(r_1^2+9r_1-8). $$

If some $s_i \neq 0 \mod r_i$, then by Lemma \ref{lemma:tetrbound4},
$S^3\setminus L$ is obtained by Dehn filling a manifold which
decomposes into at most
$\tfrac{1}{3} r_1^3 + \tfrac{5}{2} r_1^2 + 5 r_1 -5$ tetrahedra.  Thus,
$$ \vol(L) <  v_3 \left( \tfrac{1}{3} r_1^3 + \tfrac{5}{2} r_1^2 + 5  r_1 -5 \right).$$

\vspace{-2ex}
\end{proof}

%%%%%%%%%%%%%%%%%%%%%%%%%%%%%%%%%%%%%%%%%%%%%%%%%%%%%%%%%%%%%%%%%

\section{Twisted torus knots with large volume}\label{sec:lower-bound}

In this section, we prove Theorem \ref{thm:lower-bound}, showing that there exist twisted torus links with arbitrarily large volume.

\begin{proof}[Proof of Theorem \ref{thm:lower-bound}]
We will find a link $L_N$ in $S^3$ with volume at least $V+\e$, then
show that twisted torus knots are obtained by arbitrarily high Dehn
fillings of the components of the link.  By work of J{\o}rgensen and
Thurston, for high enough Dehn filling we will obtain a twisted torus
knot with volume at least $V$.

Consider again the Hopf link complement $S^3\setminus (C_1\cup C_2)$,
which is homeomorphic to $T^2 \times (-1,1)$.  Consider $T^2$ as the
unit square $[-1,1] \times [-1,1]$ with sides identified.  Let $L$ be
the link defined as the union of line segments $\{-1/2\} \times
\{1/2\} \times [-1/2,1/2]$, $\{1/2\}\times \{-1/2\} \times [-1/2,
  1/2]$, and lines from $(-1/2, 1/2)$ to $(1/2, -1/2)$ on the tori
$T^2 \times \{-1/2\}$ and $T^2 \times \{1/2\}$.

Chose $N \in \Z$ such that $v_3\,(2N+4) > V$, where $v_3$ is the
volume of a regular hyperbolic ideal tetrahedron.  Let $K_0$ be the
curve on $T^2 \times \{0\}$ with slope $0$, running through the center
$(0,0)$ of $T^2 = [-1,1]\times [-1,1] / \sim$.  Note $K_0$ does not
meet $L$.

Now, for $i=1, \dots, N$, if $i$ is odd, let $K_i$ and $K_{-i}$ be
curves of slope $1/0$ on $T^2 \times \{i/(2N+2)\}$ and $T^2 \times
\{-i/(2N+2)\}$, respectively.  If $i$ is even, let $K_i$ and $K_{-i}$
be curves of slope $0/1$ on $T^2 \times \{i/(2N+2)\}$ and $T^2 \times
\{-i/(2N+2)\}$, respectively.  Each $K_{\pm i}$ is required to be a
straight line on $T^2$, running through the center $(0,0)$ of $T^2 =
[-1,1]\times [-1,1] / \sim$.  An example for $N=2$ is shown in Figure
\ref{fig:twist-inside}\,(left).

Define the link $\widetilde L_N$ in $T^2\times (-1,1)$ by
$$\widetilde L_N = \left(L \cup K_0 \cup \left( \bigcup_{i=1}^N (K_i \cup
K_{-i}) \right) \right).$$
We identify $S^3\setminus (C_1\cup C_2)\cong T^2\times (-1,1)$, and
let $L_N$ denote $\widetilde L_N \cup C_1\cup C_2$ in $S^3$, so that
$L_N$ is a link in $S^3$ with $4+2N$ components.

\begin{lemma}
	$S^3\setminus L_N$ is hyperbolic, for any $N$.
\label{lemma:LN-hyp}
\end{lemma}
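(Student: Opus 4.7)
The plan is to apply Thurston's hyperbolization theorem to $M_N := S^3 \setminus L_N$: for a compact orientable $3$-manifold with toral boundary, hyperbolicity follows from irreducibility, boundary-irreducibility, atoroidality, anannularity, and not being Seifert fibered. Equivalently, I must show $L_N$ is non-split and prime, $M_N$ contains no essential tori or annuli, and $M_N$ is not Seifert fibered.

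First I would check that $L_N$ is non-split and prime. The Hopf link complement $S^3 \setminus (C_1 \cup C_2) \cong T^2 \times (-1,1)$ has an incompressible horizontal foliation $\{T^2 \times \{t\}\}$. Since each $K_i$ is an essential curve on $T^2 \times \{i/(2N+2)\}$ and the vertical segments of $L$ puncture all intermediate tori, any putative splitting sphere $S$ can be isotoped into general position with this foliation; innermost-disk arguments then force $S$ to bound a ball disjoint from $L_N$. Primeness follows from the same strategy applied to spheres meeting $L_N$ in two points.

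The main step is showing $M_N$ is atoroidal. Given an essential torus $T$, isotope $T$ to minimize $|T \cap \mathcal{F}|$, where $\mathcal{F}$ is the finite collection of horizontal tori containing components of $L_N$. After minimization, each component of $T \cap \mathcal{F}$ is essential on its horizontal torus and disjoint from any $K_i$ lying there; hence it has the same slope as that $K_i$. But adjacent tori of $\mathcal{F}$ contain curves $K_i, K_{i+1}$ of transverse slopes $1/0$ and $0/1$, so the slopes of $T \cap \mathcal{F}$ cannot be consistent across adjacent levels while staying essential on $T$, yielding a contradiction. If instead $T$ is disjoint from $\mathcal{F}$, then $T$ lies in one of the product regions $T^2 \times (a,b)$ cut out by $\mathcal{F}$, which is punctured by the vertical segments of $L$; a direct case analysis using the slopes of the bounding $K_i$ and the piercing data of $L$ shows any such torus must be boundary-parallel or compressible.

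Anannularity is handled by a parallel normal-surface argument relative to $\mathcal{F}$, and exclusion of a Seifert fibration follows because the alternating slopes of the $K_i$ are inconsistent with any global fiber direction on $M_N$. The hard part will be the atoroidality step, specifically the case of a torus sitting inside a single product region between consecutive levels of $\mathcal{F}$; the obstruction there comes from the vertical segments of $L$ being essentially linked with the nearby $K_i$, which prevents essential tori from forming but requires a careful enumeration of the possible slopes and link-piercing patterns.
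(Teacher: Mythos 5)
Your overall strategy (verify Thurston's hypotheses directly, by putting an essential torus in normal position with respect to the horizontal tori of $T^2\times(-1,1)$) is genuinely different from the paper's, which argues by induction on $N$: the base case $S^3\setminus L_1$ is checked by computer, and then removing $K_{\pm N}$ from the previously established hyperbolic manifold is shown to preserve hyperbolicity, using the fact that an essential torus in the new manifold must be compressible or boundary--parallel in the old one, hence bounds a product region, a solid torus, or a knotted-hole ball, each of which is excluded by analyzing its intersection with a pair of pants spanning $K_{N-2}$, $K_{N-1}$, $K_N$. A direct argument of your kind could in principle work, but as written it has real gaps precisely at the atoroidality step, which you yourself flag as unresolved.

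Concretely: (1) the surfaces in your family $\mathcal{F}$ are not closed tori in $M_N$ --- each level $T^2\times\{i/(2N+2)\}$ is punctured twice by the vertical arcs of $L$, and $L$ itself is not horizontal (its two diagonal arcs lie in $T^2\times\{\pm 1/2\}$), so ``horizontal tori containing components of $L_N$'' does not quite parse and the incompressibility/minimization machinery must be set up for punctured surfaces, which you have not done. (2) Because of those punctures, an essential curve of $T\cap\mathcal{F}$ disjoint from $K_i$ need \emph{not} be parallel to $K_i$: it can encircle one or both punctures of the level surface, so your key ``same slope as $K_i$'' claim fails as stated. (3) The cross-level contradiction presupposes that $T$ meets each product region between adjacent levels in vertical annuli joining curves of equal slope; but those regions contain the strands of $L$, so the pieces of $T$ there can be tubes following strands, annuli with both boundary circles at the same level, or more complicated subsurfaces, and no contradiction follows without substantial additional argument. (4) The hardest case --- $T$ contained in a single product region --- is deferred to ``a careful enumeration'' that is not supplied, and the anannularity and non-Seifert-fibered claims are asserted rather than proved. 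Until (2)--(4) are filled in, the proposal does not constitute a proof; if you want to pursue a non-inductive argument, you will need either a genuine normal form for essential tori relative to the punctured level surfaces, or some substitute for the paper's inductive use of hyperbolicity of the previous link complement.
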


For readability, we will postpone the proof of Lemma
\ref{lemma:LN-hyp} until we have finished proving Theorem
\ref{thm:lower-bound}.  Assuming this lemma, since $S^3\setminus L_N$
is a hyperbolic manifold with $4+2N$ cusps, its volume is at least $(4+2N)\,v_3 > V$ by
work of Adams \cite{adams}.

For any positive integers $n_1, \dots, n_N$, perform Dehn filling
on $L_N$ as follows.  First, perform $1/n_1$ Dehn filling on $K_1$ and
$-1/n_1$ filling on $K_{-1}$.  The effect of this pair of Dehn
fillings is to twist along the annulus bounded by $K_1$ and $K_{-1}$.
See, for example, Baker \cite{baker} for an explanation of twisting
along an annulus.  Since $K_0$ is the only link component meeting this
annulus, this Dehn filling performs $n_1$ Dehn twists of $K_0$ about
the slope $1/0$ (corresponding to $K_1$ and $K_{-1}$), removes $K_1$
and $K_{-1}$, but otherwise leaves the link unchanged.

Now perform $1/n_2$ filling on $K_2$, and $-1/n_2$ filling on
$K_{-2}$.  Again the effect is a Dehn twist.  Continue for each $i$,
$i=1, \dots, N$.  The result is a manifold $M(p,q,r,0)$, where $p/q$ has continued fraction expansion  $[n_1, \ldots, n_N]$, and $r$ also
depends on $N$ and the integers $n_1, \dots, n_N$. See Figure
\ref{fig:twist-inside}.  
\begin{figure}
\includegraphics[height=1.2in]{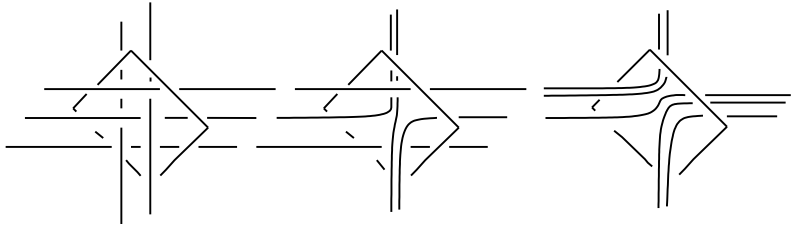}
\caption{Left:  Portion of $L_2$ shown.  Middle:  Perform $1/2$ and
	$-1/2$ Dehn filling on $K_1$ and $K_{-1}$, respectively.  Right:
	Then perform $1$ and $-1$ Dehn filling on $K_2$ and $K_{-2}$,
	respectively.}
\label{fig:twist-inside}
\end{figure}
By choosing $n_1, \dots, n_N$ to be large, we can ensure that $M(p,q,r,0)$ is hyperbolic, 
and its volume is arbitrarily close to that of $S^3\setminus
L_N$.

Now, obtain a twisted torus knot by performing $1/m$ Dehn filling on
$L$ in $M(p,q,r,0)$ and by performing Dehn filling on the Hopf link $C_1
\cup C_2$ in $M(p,q,r,0)$ along slopes with intersection number $1$.
Since there are infinitely many of these, we may choose these slopes
high enough that the result has volume arbitrarily close to that of
$S^3\setminus L_N$.  Since the volume of $S^3\setminus L_N$ is greater than $V$, this finishes the proof of Theorem \ref{thm:lower-bound}.
\end{proof}

\begin{proof}[Proof of Lemma \ref{lemma:LN-hyp}]
The proof is by induction.  One can check (by drawing the link explicitly and 
triangulating by hand or computer \cite{snappy})
 that $S^3\setminus L_1$ is hyperbolic. The manifold $S^3\setminus L_N$ is
obtained from $S^3\setminus L_{N-1}$ by removing the two closed curves
$K_N$ and $K_{-N}$.  We show that the manifold obtained by removing
$K_N$ from $S^3\setminus L_{N-1}$ is hyperbolic, and similarly the
manifold obtained by removing $K_{-N}$ from $S^3 \setminus (L_{N-1}
\cup K_N)$ is hyperbolic, assuming hyperbolicity of the previous
manifold.  The proofs for $K_N$ and for $K_{-N}$ are identical, so we
do them simultaneously.  To establish notation, call the initial
manifold $M_N$.  Assuming $M_N$ is hyperbolic, we show that
$M_N\setminus K$ is hyperbolic, where $K=K_N$ or $K=K_{-N}$.  Recall
that to show a link complement is hyperbolic, we need only show it is
irreducible, boundary irreducible, atoroidal and an-annular.

First, note that $K$ cannot be homotopically trivial in $M_N \subset
T^2\times I$, because it is parallel to the curve $1/0$ or $0/1$ on
$T^2$.

Now it follows from standard arguments that $M_N \setminus K$ is
irreducible and boundary irreducible, for any embedded essential
2--sphere in $M_N \setminus K$ would bound a ball in $M_N$, hence
contain $K$, which would mean $K$ is homotopically trivial in $M_N$,
contradicting the above paragraph.  Any boundary compression disk
would either have boundary on $K$, or would form half of an essential
2--sphere, in either case again implying $K$ is homotopically trivial.
So $M_N \setminus K$ is irreducible and boundary irreducible.

If $M_N \setminus K$ contains an essential annulus, a regular
neighborhood of the annulus and the link components on which its
boundary lies gives an embedded torus in $M_N \setminus K$.  If we can
prove $M_N \setminus K$ is atoroidal, then again standard arguments
will imply it is an-annular.

So it remains to show $M_N\setminus K$ is atoroidal.  Suppose otherwise: 
there exists an essential torus $T$ in $M_N \setminus K$.
Since $M_N$ is hyperbolic, $T$ is boundary--parallel or compressible in $M_N$. 
In either case, $T$ must bound a ``trivial'' $3$--dimensional submanifold $V \subset M_N$, where $V$ is one of the following:
\begin{enumerate}
\item $V = T^2 \times I$. This occurs when $T$ is boundary--parallel.
\item $V$ is a solid torus. 
\item $V$ is a ball-with-knotted-hole, contained in a ball in $M_N$.
\end{enumerate}

The last two cases occur if $T$ is compressible in $M_N$. In this case, surgering $T$ along its compressing disk $D$ will produce a sphere, which must bound a ball $B$ because $M_N$ is irreducible. If $B$ is disjoint from the compression disk $D$, then $V$ is obtained by adding a $1$--handle whose cross-section is $D$, and is a solid torus. If $B$ contains $D$, then $V$ is obtained by removing the $1$--handle whose cross-section is $D$, hence is a ball-with-knotted-hole.

%	\begin{claim}\label{claim:exclusive}
%	$T$ cannot simultaneously bound two different ``trivial'' pieces enumerated above. In other words, the three cases are mutually exclusive.
%	\end{claim}

%	\begin{proof}[Proof of claim]
%	Suppose that both components of $M_N \setminus T$ are trivial pieces as above. Then $\bdy M_N$ would either be empty, or else a single torus. 
%	 This contradicts the assumption that $M_N = S^3 \setminus L_{N-1}$, where $L_{N-1}$ is a link of more than two components.
%	 
%	 The remaining possibility is that $T$ bounds a single trivial piece $V$, which is simultaneously a solid torus and contained in a ball. But then $T$ admits a compression disk on each side. Since $K$ is disjoint from $T$, it can only intersect one of these compression disks, contradicting the assumption that $T$ is essential in $M_N \setminus K$.
%	 \end{proof}

\begin{figure}
\begin{overpic}[height=1.2in]{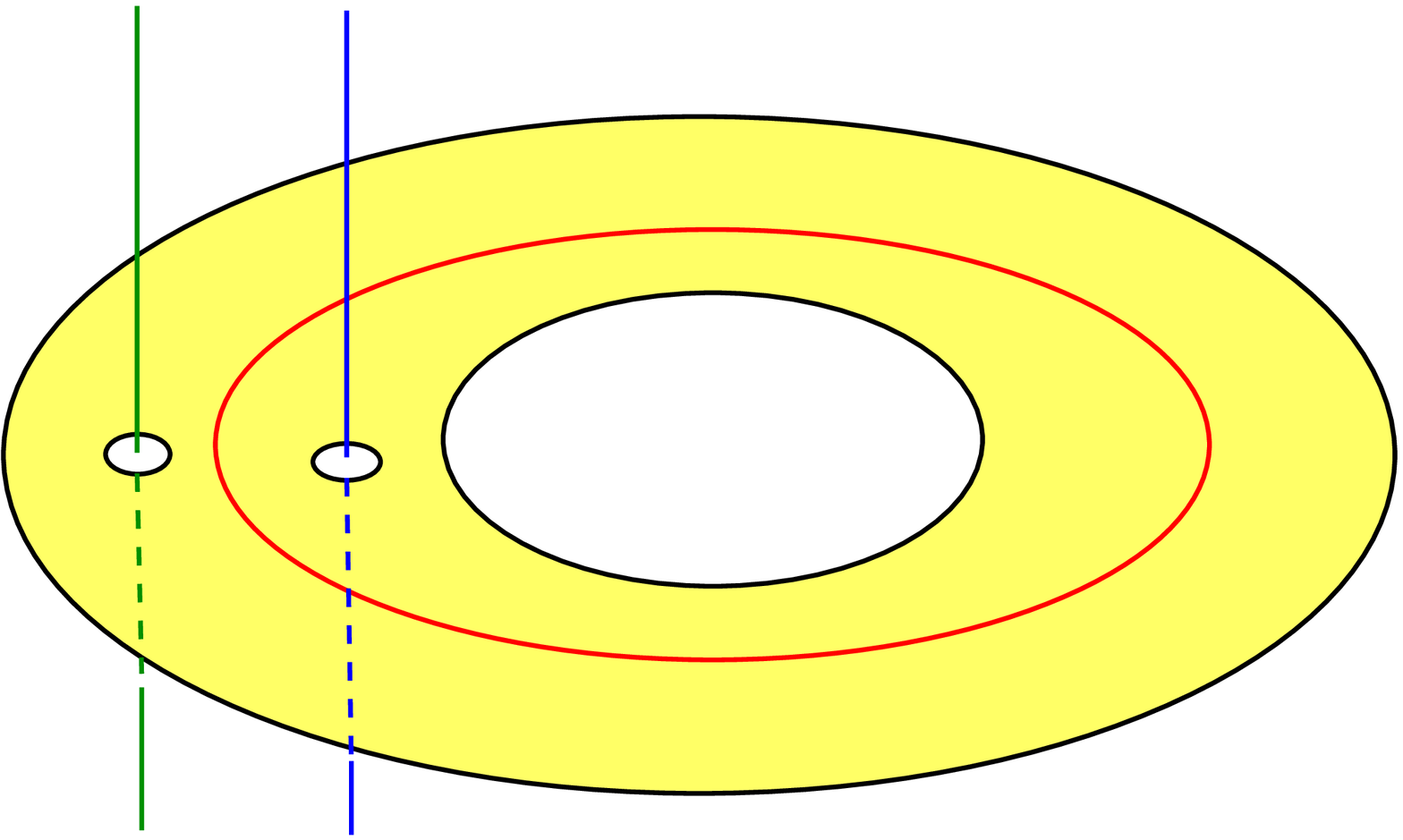}
\put(11,56){$L$}
\put(26,56){$K_{N-1}$}
\put(80,48){$\in T^2 \times \{ 1 \} $}
\put(87,28){$K_N$}
\put(53,28){$K_{N-2}$}
\end{overpic}
\caption{The knot $K = K_N$ lives on a four-punctured sphere in $M_N$.}
\label{fig:4punctsphere}
\end{figure}

It will help to consider the intersection between $T$ and the pair of pants $P$ whose three boundary components are a longitude of $K_{N-2}$, a meridian of $K_{N-1}$, and a longitude of $K_N = K$. ($P$ forms the inner half of the $4$--punctured sphere depicted in Figure  \ref{fig:4punctsphere}.) We assume that $P$ and $T$ have been moved by isotopy so as to minimize the number of curves of intersection. Because $P$ is incompressible, no curve of $P \cap T$ can be trivial on $T$. Thus all curves of intersection run in parallel along some non-trivial slope in $T$. Note that the pair of pants $P$ contains only $3$ isotopy classes of essential closed curve: these are parallel to the three boundary components. Because $P$ is essential, and the three boundary circles represent distinct elements of $\pi_1(M_N)$, the curves of $P \cap T$ that run in parallel on $T$ must also run in parallel along the same boundary component of $P$.

We consider the intersection $P \cap T$ to rule out the different types of trivial pieces enumerated above.

\begin{claim}\label{claim:not-parallel}
The torus $T$ cannot be boundary--parallel in $M_N$.
\end{claim}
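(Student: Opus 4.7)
Suppose for contradiction that $T$ is boundary--parallel in $M_N$; then $T$ cobounds a product region $V \cong T^2 \times I$ with some boundary torus $\partial_0 M_N = \partial\nu(J)$ of $M_N$, where $J$ is a component of the link whose complement is $M_N$. Essentiality of $T$ in $M_N \setminus K$ forces $K \subset V$, since otherwise $V$ itself would lie inside $M_N \setminus K$ and would exhibit $T$ as boundary--parallel there as well. So $K$ lies inside a product neighborhood of $\partial\nu(J)$, and in particular $K$ is isotopic in $M_N$ onto a curve on $\partial_0 M_N$.

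The plan is to derive a contradiction using the pair of pants $P$. Isotope $P$ in $M_N \setminus K$ to minimize $|P \cap T|$. By incompressibility of both surfaces, every curve of intersection is essential on both $P$ and $T$, and hence parallel on $T$ and parallel on $P$ to a common boundary circle, call it $\partial_\star P$. The key geometric input is that $K_{N-2}, K_{N-1}, K_N$ lie on three distinct parallel level tori in the Hopf link complement $T^2 \times I$, while only $K = K_N$ is contained in $V$; hence $\partial\nu(K_N) \subset V$, whereas $\partial\nu(K_{N-2})$ and $\partial\nu(K_{N-1})$ lie outside of $V$, unless one of them happens to coincide with $\partial_0 M_N$.

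From here I case--split on $\partial_\star P$. On the $V$--side, the outermost piece of $P \setminus T$ is an annulus or pair of pants whose boundary consists of an intersection circle on $T$ together with portions of $\partial\nu(K_N)$; minimality of $|P \cap T|$ forces this piece to be essential in $V \setminus \nu(K)$. A direct analysis of essential surfaces in the Seifert--fibered manifold $V \setminus \nu(K)$ (fibered by level circles of slope $\lambda$, where $\lambda$ is the slope of $K$ on its level torus) forces the intersection curves on $T$ to have slope $\lambda$, and forces $K$ to be isotopic in $V$ onto a curve on $\partial_0 M_N$ of this same slope. The outermost piece on the opposite side of $T$ then lives in $M_N \setminus V$, which inherits hyperbolicity from $M_N$ and is therefore atoroidal and an-annular; this piece is an essential annulus or pair of pants connecting two or three of the remaining exposed boundary tori among $\partial\nu(K_{N-2}), \partial\nu(K_{N-1}), \partial\nu(K_N)$ to $T$ with a specified boundary slope. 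Comparing the forced slope of $K$ on $\partial_0 M_N$ with the explicit positions of $K_{N-2}, K_{N-1}, K_N$ as straight lines of slope $1/0$ or $0/1$ on parallel level tori of $T^2 \times I$, the resulting slope identifications are incompatible, yielding the contradiction.

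The main obstacle will be the subcase where $\partial_0 M_N$ coincides with one of $\partial\nu(K_{N-2})$, $\partial\nu(K_{N-1})$, or $\partial\nu(K_N)$: then one boundary circle of $P$ lies on $\partial V$ rather than cleanly on one side of $T$, and the outermost analysis must be reworked. For this subcase I would use the product structure of $V$ directly to transfer the slope of $P \cap T$ across $V$ to $\partial_0 M_N$, and then rule out the resulting slope identification by the explicit embedding of the $K_i$ on their respective level tori, together with the fact that distinct $K_i$ are not isotopic to one another in $M_N$.
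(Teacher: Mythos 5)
Your setup matches the paper's: you cut off the product region $V \cong T^2\times I$, observe $K\subset V$, and bring in the pair of pants $P$ with all curves of $P\cap T$ parallel to a single boundary circle of $P$. But from that point on the argument is not actually carried out, and the places where you defer the work are exactly where the content lies. The final contradiction in your main case is ``the resulting slope identifications are incompatible,'' which is never made precise: you do not say which slopes are being compared on which torus, nor why the comparison fails. The paper's resolution here is much more elementary and does not need any slope bookkeeping: if $K_{N-2}$ and $K_{N-1}$ are outside $V$, the curves of $P\cap T$ must be parallel to the longitude of $K$, so the outermost annulus of $P$ cut off on the $K$--side isotopes $K$ into $T$, hence into $\partial M_N$ (since $T$ is parallel to a boundary torus), contradicting the construction of $K=K_N$. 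Your route through a Seifert fibration of $V\setminus \nu(K)$ is also not justified: $K\subset V$ does not a priori sit as a level (fibered) curve with respect to the product structure of $V$, so asserting that $V\setminus\nu(K)$ is fibered by circles of ``the slope of $K$'' assumes part of what must be proved. A further flaw: you argue that the outermost piece on the other side of $T$ is an essential annulus \emph{or pair of pants} in $M_N\setminus V$ and that this contradicts hyperbolicity; hyperbolicity (atoroidal and an-annular) excludes essential annuli, but it does not exclude essential pairs of pants, so that branch of your argument proves nothing as stated.

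The subcase you flag as ``the main obstacle''---when the boundary torus that $T$ is parallel to is the cusp of $K_{N-2}$ or $K_{N-1}$---is handled in the paper by a concrete mechanism you do not supply: if, say, $K_{N-2}$ meets $V$, then $T$ separates $K\cup K_{N-2}$ from $K_{N-1}$, the curves of $P\cap T$ are parallel to the meridian of $K_{N-1}$, and pushing the resulting annulus across the product $V$ yields an essential annulus in $M_N$ from the meridian of $K_{N-1}$ to the cusp torus of $K_{N-2}$, contradicting the inductive hypothesis that $M_N$ is hyperbolic. Your proposed fix (``transfer the slope across $V$ and rule out the identification using the explicit embedding of the $K_i$ and the fact that distinct $K_i$ are not isotopic'') never invokes the inductive hyperbolicity of $M_N$, which is the actual source of the contradiction, and ``distinct $K_i$ are not isotopic'' is not the relevant statement. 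So while the skeleton (product region, $K\subset V$, intersection with $P$, case split on which cusps meet $V$) coincides with the paper's, the proof as proposed has genuine gaps at every terminal step and would need to be reworked along the lines above to close.
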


\begin{proof}[Proof of claim]
Suppose that $T$ cuts off a product region $V = T^2 \times I$, adjacent to a boundary component of $M_N$. Note that, since $T$ is essential in $M_N \setminus K$, we must have $K \subset V$. Consider whether $K_{N-2}$ and $K_{N-1}$ also intersect $V$.

If both $K_{N-2}$ and $K_{N-1}$ are disjoint from $V$, then $T$ separates $K$ from  $K_{N-2}$ and $K_{N-1}$. But then the circles of $T \cap P$ run parallel to $K$, and $K$ is isotopic into $T$, hence into $\bdy M_N$. This contradicts the construction of $K$.

If $K_{N-2}$ intersects $V$, then $V$ is parallel to the boundary component of $M_N$ that corresponds to $K_{N-2}$. On the other hand, $K_{N-1}$ must lie outside $V$, because all of $\bdy V$ is already accounted for. Thus $T$ separates $K$ and $K_{N-2}$ from $K_{N-1}$, and the circles of $T \cap P$ run parallel to the meridian of $K_{N-1}$. But then there must be an essential annulus from the meridian of $K_{N-1}$ to the boundary torus corresponding to $K_{N-2}$. This contradicts the assumption that $M_N$ is hyperbolic.

If $K_{N-1}$ intersects $V$, then the argument is exactly the same, with $K_{N-1}$ and $K_{N-2}$ interchanged, and the longitude of $K_{N-2}$ in place of the meridian of $K_{N-1}$. Again, we get a contradiction.
\end{proof}

\begin{claim}\label{claim:not-solid}
The torus $T$ cannot bound a solid torus in $M_N$.
\end{claim}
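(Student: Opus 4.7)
The plan is to mimic the case analysis of Claim \ref{claim:not-parallel}, using the intersection $P \cap T$ after minimizing $|P \cap T|$. First I would observe that if $V$ is a solid torus with $\bdy V = T$, then $K \subset V$; otherwise a meridian disk of $V$ would compress $T$ in $M_N \setminus K$, contradicting its essentiality. After isotopy to minimize intersections, all circles of $P \cap T$ are parallel on $T$ to a single slope and parallel on $P$ to exactly one of its three boundary components, so there are three cases to consider.

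If the intersection circles are parallel on $P$ to the longitude of $K_{N-2}$ or to the meridian of $K_{N-1}$, then the pair-of-pants component of $P \setminus (P \cap T)$ has one boundary circle on $\bdy N(K)$, which lies inside $V$, and another on $\bdy M_N$, which lies outside $V$. Since this component is connected and disjoint from $T$, it must lie entirely on one side of $T$, so both placements give immediate contradictions.

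This leaves the case where the intersection circles are parallel on $P$ to the longitude of $K$. Then the outermost annulus $A_K \subset V$ has boundary the longitude of $K$ together with a circle $c \subset T$, showing that $K$ is isotopic in $V$ to $c$. Writing $c$ in meridian--longitude coordinates as $(p,q)$, the knot $K$ represents $q[\gamma]$ in $\pi_1(V) \cong \Z$, where $\gamma$ is the core of $V$. If $q = 0$, then $c$ is a meridian of $V$ and $K$ is null-homotopic in $V$; by Dehn's lemma $K$ bounds an embedded disk, so $K$ lies in a ball $B \subset V$, and a meridian disk of $V$ disjoint from $B$ compresses $T$ in $M_N \setminus K$---a contradiction.

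The main obstacle will be the sub-case $q \neq 0$, where a priori $K$ could be a nontrivial $(p,q)$-cable of $\gamma$. To handle this I would use the embedding $M_N \subset T^2 \times I = S^3 \setminus (C_1 \cup C_2)$ and the induced homology map $H_1(M_N) \to H_1(T^2) = \Z^2$. Because $K$ is a straight line of slope $(1,0)$ or $(0,1)$ on the projection torus, its image is primitive in $\Z^2$; but $[K] = q[\gamma]$ in $H_1(V)$ forces this image to equal $q$ times the image of $[\gamma]$, so primitivity requires $|q| = 1$. Therefore $K$ is isotopic to $\gamma$, meaning $V$ is a tubular neighborhood of $K$ and $T$ is isotopic to $\bdy N(K)$ in $M_N \setminus K$---once again contradicting essentiality of $T$.
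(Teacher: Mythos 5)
Your proof is correct and follows essentially the same route as the paper: force $K \subset V$, use the circles of $P \cap T$ to conclude they are parallel to the longitude of $K$ so that $K$ is isotopic into $T$, and then use primitivity of $[K]$ in $H_1(T^2\times I)\cong\Z^2$ to force winding number $\pm 1$, so $K$ is a core of $V$ and $T$ is parallel to $\bdy N(K)$, contradicting essentiality. (Your separate $q=0$ sub-case is redundant, since primitivity already rules out $q=0$, and the appeal to Dehn's lemma there is shaky as stated—a null-homotopic knot in the interior need not bound an embedded disk—but harmless because $K$ is isotopic to the meridian $c$, which already bounds one.)
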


\begin{proof}[Proof of claim]
Suppose that $T$ bounds a solid torus $V \subset M_N$. Then, because $T$ is incompressible in $M_N \setminus K$, we must have $K \subset V$. On the other hand, because all boundary components of $M_N$ are outside $V$, $K_{N-1}$ and $K_{N-2}$ must be outside $V$. Thus all circles of $T \cap P$ must be parallel to $K$. In particular, $K$ is parallel into the torus $T$.

Say that $K$ is an $(a,b)$ curve on $T$, which goes $a$ times around a meridian disk in $V$, and $b$ times around a longitude of $V$. Thus, in $\pi_1(T^2 \times (-1,1))$, $K$ represents $b$ times the generator of $\pi_1(V) = \mathbb{Z}$. But by definition, $K=K_N$ is a $0/1$ or $1/0$ curve on $T^2$, which is primitive in $\pi_1(T^2)$. Therefore, $b= \pm 1$, and the $(a, \pm 1)$ curve $K$ is isotopic to the core of $V$.

We conclude that $T$ is the boundary of a regular neighborhood of $K$, contradicting the assumption that it's essential in $M_N \setminus K$.
\end{proof}

\begin{claim}\label{claim:not-ball}
The torus $T$ cannot be contained in a ball in $M_N$.
\end{claim}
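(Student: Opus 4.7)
The plan is to derive a contradiction from the hypothesis that $T$ bounds a ball-with-knotted-hole $V$ contained in a ball $B\subset M_N$. Following the strategy of Claims \ref{claim:not-parallel} and \ref{claim:not-solid}, I would exploit two facts: (i) $K$ represents a primitive element of $\pi_1(T^2\times(-1,1))\cong\Z^2$ and hence is not null-homotopic in $M_N$; and (ii) any loop on $T\subset B$ is null-homotopic in $M_N$, since $B$ is simply connected.

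The first step is to determine where $K$ lies. Since $T$ separates $M_N$ into $V$ and $M_N\setminus V$, and $K$ is connected and disjoint from $T$, $K$ lies entirely in one component. If $K\subset V\subset B$, then $K$ is null-homotopic in $B\subset M_N$, contradicting the primitivity of $[K]$. So I would reduce to $K\subset M_N\setminus V$, where the goal is to arrange the compressing disk $D$ of $T$ (which sits in the knotted $1$-handle $W=B\setminus V$) to be disjoint from $K$, yielding a compressing disk for $T$ in $M_N\setminus K$ and contradicting essentiality. The case $K\cap B=\emptyset$ is immediate. In the remaining case $K\cap W\neq\emptyset$, arcs of $K\cap W$ have endpoints on the end disks $E_1\cup E_2\subset\partial B\setminus T$ (since $K$ avoids $T$, which contains the lateral annulus of $\partial W$); I would use the contractibility of $W\cong D^2\times I$ together with the fact that $E_1,E_2$ lie off of $T$ to slide these arcs out of $W$ through $E_1\cup E_2$ by an isotopy of $K$ in $M_N\setminus T$.

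The main obstacle is arcs of $K\cap W$ that traverse $W$ end-to-end from $E_1$ to $E_2$, since these carry nonzero algebraic intersection with $D$ and resist simple sliding. A more robust backup is to mimic the pair-of-pants analysis of Claims \ref{claim:not-parallel} and \ref{claim:not-solid}: isotope $T$ and $P$ to minimal position and observe that each component of $T\cap P$ must be parallel on $P$ to a longitude of $K_{N-2}$, a meridian of $K_{N-1}$, or a longitude of $K$ -- each non-trivial in $\pi_1(M_N)$ -- yet lies on $T\subset B$ and is therefore null-homotopic in $M_N$, a contradiction; the residual case $T\cap P=\emptyset$ would then be excluded by noting that the cusps carrying $\partial P$ cannot lie in the compact submanifold $V\subset B$, reducing to an analysis of $P\subset M_N\setminus V$ that returns to the compressing-disk argument above.
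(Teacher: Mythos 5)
Your opening reduction is sound and runs parallel to the paper: you place $K$ outside $V$ by noting that $K$ is primitive in $\pi_1(T^2\times(-1,1))$ while every loop in the ball $B\supset V$ is null-homotopic in $M_N$ (the paper gets the same conclusion by observing that $K$ must puncture the compressing disk, which lies outside $V$). The gap comes after that. Your main plan --- isotope $K$ off the compressing disk $D\subset W$ to get a compressing disk for $T$ in $M_N\setminus K$ --- cannot be completed, and not just for technical reasons: since $T$ is assumed essential in $M_N\setminus K$, \emph{every} compressing disk of $T$ in $M_N$ must meet $K$, so the arcs of $K\cap W$ running end-to-end through the knotted handle, which you correctly flag as the obstacle, are forced to exist in any putative configuration; ``making $K$ disjoint from $D$'' is exactly what can never be achieved by isotopy. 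Your backup does correctly force $T\cap P=\emptyset$ (curves of intersection would be boundary-parallel in $P$, hence non-trivial in $\pi_1(M_N)$, yet null-homotopic because $T\subset B$). But in this case $K$, $K_{N-1}$ and $K_{N-2}$ all lie outside $V$, so $T\cap P=\emptyset$ is entirely consistent with the configuration and yields no contradiction --- which is presumably why the paper abandons the pair-of-pants analysis for this claim. Your plan then ``returns to the compressing-disk argument above,'' i.e.\ to the gapped step, so the argument is circular.

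The missing idea is a global one, and it is the heart of the paper's proof: Dehn fill all other components of $M_N$ along their meridians to return to $S^3$. Then $S^3\setminus V$ is a tubular neighborhood of a knot $K'$, which is genuinely knotted because $V$ is not a solid torus (Claim \ref{claim:not-solid}); the curve $K$ lies in this neighborhood, and since $K$ meets every compressing disk of $T$, the torus $T$ stays incompressible in $S^3\setminus K$, exhibiting $K$ as a satellite knot with companion $K'$. This contradicts the fact that $K=K_N$ is a $(1,0)$ or $(0,1)$ curve on the standardly embedded torus, hence an unknot in $S^3$. Note that your proposal never invokes the unknottedness of $K$ in $S^3$ (or any comparable global property of $K$), and some such input is indispensable: purely local sliding or intersection-minimizing arguments cannot rule out $K$ winding through the knotted hole.
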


\begin{proof}[Proof of claim]
Suppose that the trivial piece $V$ bounded by $T$ is a ball-with-knotted-hole. Then $V$ is the complement of a tubular neighborhood of a knot $K' \subset S^3$. Note that $K'$ must truly be knotted, because by Claim \ref{claim:not-solid}, $V$ cannot be a solid torus.

Now, consider what happens to the pair $(M_N, K)$ when we Dehn fill all boundary components of $M_N$ along their meridians in $S^3$. Only the knot $K \subset S^3$ remains. By construction, $K$ is contained in $S^3 \setminus V$, which is a tubular neighborhood of $K'$. Furthermore, the torus $T$ is incompressible into $V$, and $K$ must intersect any compression disk of $T$ to the outside of $V$. Thus $T$ is incompressible in $S^3 \setminus K$, and $K$ is a satellite knot with companion $K'$. 

On the other hand, recall that $K = K_N$ is a curve of slope $0/1$ or $1/0$ on the torus $T^2$, hence parallel to one of the components of the Hopf link, and an unknot in $S^3$. This is a contradiction.
\end{proof}

By Claims \ref{claim:not-parallel}, \ref{claim:not-solid}, and \ref{claim:not-ball}, $T$ cannot be boundary--parallel or compressible in $M_N$. This violates the inductive hypothesis that $M_N$ is hyperbolic, and completes the proof that $M_N \setminus K$ is hyperbolic.
\end{proof}

\newpage
\bibliographystyle{hamsplain}
\bibliography{references}

\providecommand{\bysame}{\leavevmode\hbox to3em{\hrulefill}\thinspace}
\providecommand{\href}[2]{#2}
\begin{thebibliography}{10}

\bibitem{adams:3-punct}
Colin~C. Adams, \emph{Thrice-punctured spheres in hyperbolic {$3$}-manifolds},
  Trans. Amer. Math. Soc. \textbf{287} (1985), no.~2, 645--656.

\bibitem{adams}
\bysame, \emph{Volumes of {$N$}-cusped hyperbolic {$3$}-manifolds}, J. London
  Math. Soc. (2) \textbf{38} (1988), no.~3, 555--565.

\bibitem{baker}
Kenneth Baker, \emph{Surgery descriptions and volumes of {B}erge knots. {I}.
  {L}arge volume {B}erge knots}, J. Knot Theory Ramifications \textbf{17}
  (2008), no.~9, 1077--1097.

\bibitem{birman-kofman}
Joan Birman and Ilya Kofman, \emph{A new twist on {L}orenz links}, J. Topol.
  \textbf{2} (2009), no.~2, 227--248.

\bibitem{Birman-Gebhardt-Meneses}
Joan~S. Birman, Volker Gebhardt, and Juan Gonz{\'a}lez-Meneses,
  \emph{{Conjugacy in Garside groups III: Periodic braids}}, J. Algebra
  \textbf{316} (2007), no.~2, 746--776.

\bibitem{cdw:simplest}
Patrick~J. Callahan, John~C. Dean, and Jeffrey~R. Weeks, \emph{The simplest
  hyperbolic knots}, J. Knot Theory Ramifications \textbf{8} (1999), no.~3,
  279--297.

\bibitem{ckp:next-simplest}
Abhijit Champanerkar, Ilya Kofman, and Eric Patterson, \emph{The next simplest
  hyperbolic knots}, J. Knot Theory Ramifications \textbf{13} (2004), no.~7,
  965--987.

\bibitem{snappy}
Marc Culler, Nathan~M. Dunfield, and Jeffrey~R. Weeks, \emph{{SnapPy, a
  computer program for studying the geometry and topology of 3-manifolds}},
  {\tt http://\allowbreak snappy.\allowbreak computop.\allowbreak org}.

\bibitem{dean:sff}
John~C. Dean, \emph{Small {S}eifert-fibered {D}ehn surgery on hyperbolic
  knots}, Algebr. Geom. Topol. \textbf{3} (2003), 435--472.

\bibitem{fkp:dfvjp}
David Futer, Efstratia Kalfagianni, and Jessica~S. Purcell, \emph{Dehn filling,
  volume, and the {J}ones polynomial}, J. Differential Geom. \textbf{78}
  (2008), no.~3, 429--464.

\bibitem{fkp:symm}
\bysame, \emph{Symmetric links and {C}onway sums: volume and {J}ones
  polynomial}, Math. Res. Lett. \textbf{16} (2009), no.~2, 233--253.

\bibitem{fkp:farey}
\bysame, \emph{Cusp areas of {F}arey manifolds and applications to knot
  theory}, Int. Math. Res. Notices (2010), no.~23, 4434--4497,
  \mbox{arXiv:0808.2716}.

\bibitem{lackenby:volume-alt}
Marc Lackenby, \emph{The volume of hyperbolic alternating link complements},
  Proc. London Math. Soc. (3) \textbf{88} (2004), no.~1, 204--224, With an
  appendix by Ian Agol and Dylan Thurston.

\bibitem{purcell:hyp-genaug}
Jessica~S. Purcell, \emph{Hyperbolic geometry of multiply twisted knots}, Comm.
  Anal. Geom. \textbf{18} (2010), no.~1, 101--120, \mbox{arXiv:0709.2919}.

\bibitem{Rolfsen}
Dale Rolfsen, \emph{Knots and links}, Publish or Perish Inc., Berkeley, Calif.,
  1976, Mathematics Lecture Series, No. 7.

\bibitem{Thurston}
William~P. Thurston, \emph{{The Geometry and Topology of Three-Manifolds}}, \tt
  http://\allowbreak www.msri.org/\allowbreak publications/\allowbreak
  books/\allowbreak gt3m/.

\end{thebibliography}

\end{document}